\documentclass[11pt] {article}
\usepackage{amsmath,amssymb,amsfonts}
\usepackage[utf8]{inputenc} 
\usepackage{amsthm}

\usepackage{tikz}
\usepackage{hyperref}
\usepackage{graphicx}
\usepackage{amsfonts}

\usepackage{color}
\usepackage[english]{babel}
\usepackage{ifthen}
\usepackage{graphicx}
\usepackage{tikz}
\usetikzlibrary{decorations.markings}
\usetikzlibrary{plotmarks}
\usetikzlibrary{patterns}
\usepackage{pgfplots}
\usepgfplotslibrary{fillbetween}
\usepackage{makeidx}
\usetikzlibrary{calc}

\usepackage{geometry} 
\geometry{a4paper} 

\usepackage{graphicx} 

\usepackage{array} 
\usepackage{paralist} 
\usepackage{verbatim} 
\usepackage{subfig} 

\usepackage{textpos}












\newcommand{\R}{\mathbb{R}}

\newcommand{\C}{\mathbb{C}}

\newcommand{\D}{\mathbb{D}}

\newcommand{\n}{\vec{n}}
\newcommand{\s}{\mathbb{S}}
\newcommand{\Ar}{\mathring{A}}
\newcommand{\zb}{\bar{z}}

\newcommand{\Lr}{\vec{L}}
\tikzset { domaine/.style 2 args={domain=#1:#2} }

\newtheorem{theo}{Theorem}[section]
\newtheorem*{theo*}{Theorem}

\newtheorem{prop}[theo]{Proposition}
\newtheorem*{prop*}{Proposition}
\newtheorem{lem}{Lemma}[section]
\newtheorem{cor}{Corollary}[section]
\newtheorem*{cor*}{Corollary}
\newtheorem{de}{Definition }[section]

\newtheorem{remark}{Remark}[section]

\newcommand{\nocontentsline}[3]{}
\newcommand{\tocless}[2]{\bgroup\let\addcontentsline=\nocontentsline#1{#2}\egroup}


\title{An $\varepsilon$-regularity result with mean curvature control for Willmore immersions and application to minimal bubbling.}
\author{Nicolas Marque \thanks{Institut Mathématique de Jussieu, Paris VII, Bâtiment Sophie Germain, Case 7052, 75205 Paris Cedex 13, France. E-mail address: nicolas.marque@imj-prg.fr}}
\date{\today} 

\begin{document}

\maketitle

\abstract{ In this paper we prove a convergence result for sequences of Willmore immersions with simple minimal bubbles. To this end we replace the total curvature control in T. Rivière's proof of the $\varepsilon$-regularity for Willmore immersions by a control of the local Willmore energy.}
\maketitle

\tableofcontents
\hspace{0.5cm}
\section{Introduction}
The following is primarily concerned with the study of Willmore immersions in $\R^3$. Let $\Phi$ be an immersion from a closed Riemann surface $\Sigma$ into $\R^3$. We denote by $g:= \Phi^* \xi$ the pullback by $\Phi$ of the euclidean metric $\xi$ of $\R^3$, also called the first fundamental form of $\Phi$ or the induced metric. Let $d\mathrm{vol}_g$ be the volume form associated with $g$. The Gauss map of $\Phi$ is the normal to the surface. In local coordinates $(x,y)$: 
$$ \n:= \frac{\Phi_x \times \Phi_y}{\left|\Phi_x \times \Phi_y \right|},$$
where $ \Phi_x = \partial_x \Phi$, $\Phi_y = \partial_y \Phi$ and $\times$ is the usual vectorial product in $\R^3$. 
Denoting  $\pi_{\n}$ the orthonormal projection on the normal (meaning $\pi_{\n}(v) = \langle \n , v \rangle \n$), the second fundamental form of $\Phi$ at the point $p \in \Sigma$ is defined as follows.
$$ \vec{A}_p (X,Y):= A_p(X,Y) \n:=\pi_{\n} \left( d^2 \Phi \left(X,Y \right) \right) \text{ for all } X,Y \in T_p\Sigma.$$
The mean curvature of the immersion at $p$ is then 
$$ \vec{H}(p)= H(p) \n= \frac{1}{2} \mathrm{Tr}_g \left( A \right) \n,$$
while its tracefree second fundamental form is 
$$\Ar_p (X,Y) = A_p(X,Y)  - H(p) g_p(X,Y).$$
The Willmore energy is  defined as $$W(\Phi):= \int_\Sigma H^2 d\mathrm{vol}_g.$$ Willmore immersions are critical points of this Willmore energy.
The Willmore energy was already  under scrutiny in the XIXth century in the study of elastic plates, but to our knowledge W. Blaschke was the first to state (see \cite{MR0076373}) its invariance by conformal diffeomorphisms of $\R^3$  (which was later rediscovered by T. Willmore, see  \cite{bibwill}) and to study it in the context of conformal geometry. 

While the Willmore energy is the canonically studied Lagrangian, its invariance is contextual. Indeed, $W$ is not invariant by inversions whose center is on the surface (the simplest example is the euclidean sphere which is sent to a plane once inverted at one of its points). The true \emph{pointwise} conformal invariant (as shown by T. Willmore, \cite{bibwill}) is in fact $\big| \Ar_p \big| d\mathrm{vol}_{g_p}$. The tracefree curvature and the total curvature are then two relevant energies, respectively defined as follows: 
$$\begin{aligned}E( \Phi ) &:= \int_\Sigma \big| {A} \big|^2_g d\mathrm{vol}_g = \int_\Sigma \left|\nabla_g \n \right|^2 d\mathrm{vol}_g,\\
 \mathcal{E} ( \Phi ) &:= \int_\Sigma \big| {\Ar} \big|^2_g d\mathrm{vol}_g. \end{aligned}$$
Quick and straightforward computations (done in appendix  \ref{formulasconformalimmersion} in a conformal chart) ensure that both
\begin{equation} \label{courbtot} E ( \Phi )  = 4 W(\Phi) - 4 \pi \chi(\Sigma) \end{equation}
with $\chi(\Sigma)$ the Euler characteristic of $\Sigma$, and 
\begin{equation} \label{courbsanstrace} \mathcal{E}(\Phi) = 2W(\Phi) - 4\pi \chi(\Sigma).\end{equation}
The invariance of $W$ when the topology of the surface is not changed then follows from (\ref{courbsanstrace}). 
  A Willmore surface is thus a critical point of $W$, $E$ and $\mathcal{E}$.

Great strides in the understanding of Willmore surfaces were made by E. Kuwert and R. Schatzle (see  \cite{MR2119722}  and  \cite{bibkuwertschatzlewillmoreflow}) and then by T. Rivière, who introduced the  framework of weak immersions\footnote{Denoted $\mathcal{E}(\Sigma)$, see definition \ref{weakimmersions} for more details} and  Willmore conservation laws (see for instance  theorem I.4 in \cite{bibanalysisaspects}). Y. Bernard later showed in \cite{bibnoetherwill} that  they stemmed from the conformal invariance of $W$. These conservation laws allow for the introduction on simply connected domains of auxiliary Willmore quantities $\Lr$, $S$ and $\vec{R}$, defined as follows 
\begin{equation}\label{LRSintro}\begin{aligned}&  \nabla^\perp \Lr = \nabla \vec{H} -3 \pi_{\n} \left( \nabla \vec{H} \right) + \nabla^\perp \n \times \vec{H}, \\  &\nabla^\perp S = \langle \Lr, \nabla^\perp \Phi \rangle, \\
& \nabla^\perp \vec{R} = \Lr \times \nabla^\perp \Phi + 2H \nabla^\perp \Phi. \end{aligned}\end{equation}
The second and third quantities, $S$ and $\vec{R}$, are remarkable in that they solve a Jacobian-like system that allows the use of Wente's lemmas 
\begin{equation}
\label{systemenRSPhiannexi}
\left\{
\begin{aligned}
\Delta S &= - \left\langle \nabla \n, \nabla^\perp \vec{R} \right\rangle \\
\Delta \vec{R} &= \nabla \n \times \nabla^\perp \vec{R} + \nabla^\perp S \nabla \n\\
\Delta \Phi &= \frac{1}{2} \left( \nabla^\perp S. \nabla \Phi + \nabla^\perp \vec{R} \times \nabla \Phi \right).
\end{aligned}
\right.
\end{equation}
 Exploiting these quantities and system (\ref{systemenRSPhiannexi}) yielded a variety of $\varepsilon$-regularity results for Willmore immersions (following is a combination of theorem I.5 in \cite{bibanalysisaspects} and theorem I.1 in \cite{bibenergyquant}).

\begin{theo}
\label{epsilonregularityRivierecontrolphi}
Let $\Phi \in \mathcal{E}\left(\D \right)$
 be a conformal weak Willmore immersion.  Let  $\n$ denote its Gauss map, $H$ its mean curvature and $\lambda = \frac{1}{2}\log \left( \frac{\left| \nabla \Phi \right|^2}{2} \right)$ its conformal factor.
We assume $$ \left\| \nabla \lambda \right\|_{L^{2,\infty} \left( \D \right)} \le C_0.$$
Then there exists $\varepsilon_0>0$ such that if
\begin{equation} \label{petitenenergydanslepsilonregularite} \int_{\D} \left| \nabla \n \right|^2  < \varepsilon_0, \end{equation}
then for any $r<1$ 
and for any $k \in \mathbb{N}$ 
$$\begin{aligned}
&\| \nabla^k \n \|^2_{L^\infty \left( \D_{r} \right)} \le C \int_\D \left| \nabla \n \right|^2, \\
&\| e^{-\lambda} \nabla^k \Phi \|^2_{L^\infty \left( \D_r \right)} \le   C  \left( \int_\D \left| \nabla \n \right|^2 +1 \right),
\end{aligned}$$
with $C$ a real constant depending on $r$, $C_0$ and $k$.
\end{theo}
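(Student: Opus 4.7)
The plan is to follow T. Rivière's approach, exploiting the auxiliary quantities $\Lr$, $S$, and $\vec{R}$ defined in (\ref{LRSintro}) together with the Jacobian-type system (\ref{systemenRSPhiannexi}). Since $\D$ is simply connected and $\Phi$ is Willmore, the potentials $\Lr$, $S$, and $\vec{R}$ are well-defined up to additive constants; I would fix a gauge by requiring their averages on $\D$ to vanish, so their $L^2$ norms are controlled by $\|\nabla \Lr\|_{L^2}$, $\|\nabla S\|_{L^2}$, $\|\nabla \vec{R}\|_{L^2}$ via Poincaré.

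The first main step is to view (\ref{systemenRSPhiannexi}) as a coupled Wente system on $\D$. Each right-hand side has the Jacobian structure $\nabla a \cdot \nabla^\perp b$, so by the Coifman--Lions--Meyer--Semmes theorem the products land in the Hardy space $\mathcal{H}^1$, and Wente estimates upgrade $\nabla S$ and $\nabla \vec{R}$ to $L^{2,1}_{\mathrm{loc}}$ with bounds of the schematic form
\begin{equation*}
\|\nabla S\|_{L^{2,1}(\D_r)} + \|\nabla \vec{R}\|_{L^{2,1}(\D_r)} \lesssim \|\nabla \n\|_{L^2(\D)}\bigl( \|\nabla S\|_{L^2(\D)} + \|\nabla \vec{R}\|_{L^2(\D)} + \|\nabla \Phi\|_{L^2(\D)}\bigr).
\end{equation*}
A parallel estimate on $\Lr$ must first be obtained via the Willmore conservation law; the key input is that $\pi_{\n}(\nabla \vec{H})$ can be expressed as $\nabla \n \times (\nabla \n \times \vec{H})$-type terms, giving a Jacobian structure one can feed into Wente once more. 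The smallness assumption (\ref{petitenenergydanslepsilonregularite}) lets me close this coupled system by a contraction/absorption argument, ending with small $L^{2,1}$ norms for $\nabla S$, $\nabla \vec{R}$, and $\nabla \Lr$ on $\D_r$.

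The second step uses the third line of (\ref{systemenRSPhiannexi}), $\Delta \Phi = \tfrac{1}{2}(\nabla^\perp S \cdot \nabla \Phi + \nabla^\perp \vec{R} \times \nabla \Phi)$, to pass from control on $S$, $\vec{R}$ to control on $\Phi$. Writing $\nabla \Phi = e^{\lambda} \vec{e}$ for an orthonormal frame $\vec{e}$, the equation becomes a Wente-type PDE for $e^{-\lambda}\nabla\Phi$, where the assumption $\|\nabla \lambda\|_{L^{2,\infty}(\D)} \leq C_0$ enters crucially: Lorentz-space duality $L^{2,1}\cdot L^{2,\infty} \hookrightarrow L^1$ and the $\mathcal{H}^1$--$BMO$ duality give an $L^\infty$ bound on $e^{-\lambda}\nabla\Phi$. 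Since $\n = (\Phi_x \times \Phi_y)/|\Phi_x \times \Phi_y|$ and conformality enforces $|\Phi_x|=|\Phi_y|=e^\lambda$, this immediately transfers to an $L^\infty$ bound on $\n$, and differentiating the equation $\Delta \n = -|\nabla \n|^2 \n + (\text{terms in } \nabla H, \nabla \Phi)$ gives one on $\nabla \n$.

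The third step is a standard bootstrap: once $\n$, $e^{-\lambda}\nabla\Phi$, $\nabla S$, $\nabla \vec{R}$ are controlled in sufficiently strong local norms, the equations in (\ref{systemenRSPhiannexi}) and the Willmore equation allow one to iterate Calderón--Zygmund estimates to obtain all $\|\nabla^k \n\|_{L^\infty(\D_r)}$ and $\|e^{-\lambda}\nabla^k\Phi\|_{L^\infty(\D_r)}$, shrinking $r$ at each step if needed.

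The main obstacle will be the first step: properly absorbing the coupling between $S$, $\vec{R}$, and $\Lr$ while keeping Lorentz-norm constants explicit in $C_0$ and $\varepsilon_0$. In particular, the degeneracy of $L^{2,\infty}$ (which is only a quasi-norm on which the standard Riesz potential bounds fail) forces one to be careful whenever $\nabla \lambda$ is multiplied against a term, and the closed argument must keep $\|\nabla \n\|_{L^2}$ as the small parameter throughout. A secondary technical issue is the presence of boundary harmonic corrections when localizing from $\D$ to $\D_r$, which I would handle via Hodge decomposition and mean-value properties of the harmonic pieces.
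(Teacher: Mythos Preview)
The paper does not give its own proof of this theorem; it is quoted as a known result (a combination of theorem I.5 in \cite{bibanalysisaspects} and theorem I.1 in \cite{bibenergyquant}). The paper instead proves modified versions (Theorems \ref{epsilonregularitehnablaphifaibleintro} and \ref{epsilonregularitehnablaphifaibleLL2inftyi}), and the surrounding material (Theorems \ref{controlconformalfactor}, \ref{epsilonreularitylintro}, \ref{n21controlsousLL2inftyintro}) spells out the skeleton of Rivi\`ere's classical argument. Compared against that, your outline has one genuine gap.

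Your mechanism for controlling $\Lr$ is wrong, and the step that actually produces it is missing. You claim $\pi_{\n}(\nabla\vec{H})$ can be recast as ``$\nabla\n\times(\nabla\n\times\vec{H})$-type terms'' with Jacobian structure and fed into Wente. It cannot: writing $\vec{H}=H\n$ gives $\pi_{\n}(\nabla\vec{H})=(\nabla H)\,\n$, a pure gradient in the normal direction, and no $\nabla a\cdot\nabla^\perp b$ rewriting of (\ref{definitionl}) is available. The actual route, as in Theorem \ref{epsilonreularitylintro}, is: first use the smallness hypothesis (\ref{petitenenergydanslepsilonregularite}) to obtain a Harnack inequality $C^{-1}e^\Lambda\le e^\lambda\le Ce^\Lambda$ on $\D_r$ via H\'elein's Coulomb-frame argument (Theorem \ref{controlconformalfactor}). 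This is the first of the two places the paper says the small-energy hypothesis is used, and it is entirely absent from your sketch. Once the Harnack bound is in hand, $H=e^{-\lambda}|H\nabla\Phi|\in L^2_{\mathrm{loc}}$, hence $\nabla\vec{H}\in H^{-1}$, while $H\nabla\n$ and $\nabla^\perp\n\times\vec{H}$ lie in $L^1$ by Cauchy--Schwarz; cf.\ the recasting (\ref{eqauxsurleL}). One then concludes $\Lr-\vec{\mathcal{L}}\in L^{2,\infty}_{\mathrm{loc}}$ from $\nabla\Lr\in H^{-1}\oplus L^1$ via the low-regularity lemma (Theorem \ref{theoremepourbornerL}), not from any Wente estimate. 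Without the Harnack step none of this starts, and your proposed contraction on $\Lr$ has no input to feed on.

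After $e^\lambda\Lr\in L^{2,\infty}$ is secured, your treatment of the $(S,\vec{R})$ system is in the right spirit, though the transition to $L^\infty$ does not proceed through a ``Wente-type PDE for $e^{-\lambda}\nabla\Phi$''. One instead reads $|H\nabla\Phi|\le\tfrac12|\nabla\vec{R}|$ directly from (\ref{majorerhnablaphiparnablaR}), uses a Morrey-type decay on dyadic balls to break criticality (Step 2 in the proof of Theorem \ref{epsilonregularitehnablaphifaibleLL2inftyi}), and then runs ordinary Calder\'on--Zygmund on (\ref{systemenRSPhiannex}) and $\Delta\Phi=2He^{2\lambda}\n$; the Harnack inequality is used again here to pass freely between $e^\lambda$ and the constant $e^\Lambda$.
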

This theorem in fact followed a preexisting result of E. Kuwert and R. Schätzle (see \cite{bibkuwschat}). T. Rivière introduced the auxiliary quantities, pinpointed their key role and originally wrote a proof in arbitrary codimension (see for instance theorem I.5 in \cite{bibanalysisaspects}). 

Such results induce a now classical concentration/compactness dialectic, as originally developed by J. Sacks and K. Uhlenbeck, for Willmore surfaces with bounded total curvature (or alternatively, given (\ref{courbtot}), bounded Willmore energy and topology). In essence, sequences of Willmore surfaces converge smoothly away from concentration points, on which trees of Willmore spheres are blown (see \cite{bubbles} for an exploration of the bubble tree phenomenon in another simpler case). Y. Bernard and T. Rivière developed an energy quantization result for such sequences of Willmore immersions assuming their conformal class is in a compact of the Teichmuller space (see theorem I.2 in \cite{bibenergyquant}). P. Laurain and T. Rivière then showed one could replace  the bounded conformal class hypothesis by a weaker convergence of residues linked with the conservation laws. Since we will work with bounded conformal classes, we here give abridged versions of theorems I.2 and I.3 of \cite{bibenergyquant}.

\begin{theo}
\label{energyquandberriv}
Let $\Phi_k$ be a sequence of Willmore immersions of a closed surface $\Sigma$. Assume that 
$$\limsup_{k\rightarrow \infty} W(\Phi_k) < \infty,$$
and that the conformal class of $\Phi^*_k \xi$ remains within a compact subdomain of the moduli space of $\Sigma$. Then, modulo extraction of a subsequence, the following energy identity holds 
$$ \lim_{k\rightarrow \infty} W(\Phi_k) = W (\Phi_\infty) + \sum_{s=1}^p W(\eta_s) + \sum_{t=1}^q \left[ W( \zeta_t) - 4 \pi \theta_t \right],$$
where $\Phi_\infty$ (respectively $\eta_s$, $\zeta_t$) is a possibly branched smooth immersion of $\Sigma$  (respectively $\s^2$) and $\theta_t \in \mathbb{N}$.
Further, there exists $a^1 \dots a^n \in \Sigma$ such that  $$ \Phi_k \rightarrow \Phi_\infty \text{ in } C^\infty_{\mathrm{loc}} \left( \Sigma \backslash \{a^1,\dots, a^n \} \right) $$ up to conformal diffeomorphisms of $\R^3\cup \{ \infty\}$.
Moreover, there exists a sequence of radii $\rho^s_k$, points $x^s_k \in \C$ converging to one of the $a^i$
 such that up to conformal diffeomorphisms of $\R^3$
$$\Phi_k\left( \rho^s_k y + x^s_k \right) \rightarrow \eta_s \circ \pi^{-1} (y) \text{ in } C^\infty_{\mathrm{loc}} \left( \C \backslash \{ \text{finite set} \} \right).$$
 Finally,
 there exists a sequence of radii $\rho^t_k$, points $x^t_k \in \C$ converging to one of the $a^i$
such that up to conformal diffeomorphisms of $\R^3$
$$\Phi_k\left( \rho^t_k y + x^t_k \right) \rightarrow \iota_{p_t} \circ \zeta_t \circ \pi^{-1} (y) \text{ in } C^\infty_{\mathrm{loc}} \left( \C \backslash \{ \text{finite set} \} \right).$$ Here, $\iota_{p_t}$ is an inversion at $p \in \zeta_t ( \s^2)$. The integer $\theta_t$ is the density of $\zeta_t$ at $p_t$.
\end{theo}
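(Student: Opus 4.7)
The plan is to combine a Sacks--Uhlenbeck-type bubble extraction with a no-neck estimate for the Willmore energy. I would first identify the concentration set: the conformal-class hypothesis lets me, after extraction, choose conformal charts in which $\Phi_k$ is conformal with conformal factor $\lambda_k$ bounded in $L^{2,\infty}_{\mathrm{loc}}$ away from potential nodes, and the Willmore bound combined with (\ref{courbtot}) yields a uniform bound on $\int_\Sigma |\nabla \vec{n}_k|^2\,d\mathrm{vol}_{g_k}$. Setting $\{a^1,\dots,a^n\}:=\{p\in\Sigma : \forall r>0,\ \limsup_k \int_{B_r(p)} |\nabla \vec{n}_k|^2\,d\mathrm{vol}_{g_k}\geq\varepsilon_0\}$ with $\varepsilon_0$ from Theorem \ref{epsilonregularityRivierecontrolphi}, this set is finite by the global energy bound, and on any compact $K\subset\Sigma\setminus\{a^i\}$ that theorem supplies uniform $C^k$ estimates. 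Arzel\`a--Ascoli then produces the smooth limit $\Phi_\infty$.

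Next I would extract bubbles at each $a^i$. Working in a local conformal chart and performing a standard concentration renormalization, I would choose radii $\rho_k^s\to 0$ and centers $x_k^s\to a^i$ so that the rescaled immersions $\tilde\Phi_k(y)=\Phi_k(x_k^s+\rho_k^s y)$ carry at least $\varepsilon_0/2$ of the curvature inside $\D$ without further concentration in any fixed compact set. Applying Theorem \ref{epsilonregularityRivierecontrolphi} to $\tilde\Phi_k$ on $\C$ minus finitely many defect points, then invoking removable singularity results for Willmore immersions of finite total curvature at those points and at infinity via stereographic projection $\pi$, yields a (possibly branched) Willmore immersion of $\s^2$. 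When the renormalized immersions remain of bounded diameter modulo a M\"obius normalization of $\R^3$, I recover a regular bubble $\eta_s$; when the process instead forces a blow-down onto a point $p_t\in\Phi_\infty(\Sigma)$, inverting at $p_t$ reveals a smooth immersion $\zeta_t$ of $\s^2$ of density $\theta_t$ at $p_t$, so that the actual rescaled limit is $\iota_{p_t}\circ\zeta_t$. I would iterate this extraction inside each bubble to exhaust all remaining concentration; finiteness of the tree follows from a universal positive lower bound on the Willmore energy of each bubble (essentially $4\pi$, via Li--Yau).

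The core obstacle is the neck energy quantization: one must show that on each annular region separating $\Phi_\infty$ from a bubble, or a bubble from a sub-bubble, $W(\Phi_k|_{\mathrm{neck}})\to 0$. I would decompose each neck into dyadic annuli $A_{j,k}$, on each of which the $\varepsilon$-regularity gives $\int_{A_{j,k}}|\nabla\vec{n}_k|^2<\varepsilon_0$, and then exploit the Jacobian-like system (\ref{systemenRSPhiannexi}) through $L^{2,1}$--$L^{2,\infty}$ Lorentz duality and Wente-type estimates to upgrade this smallness into pointwise control of $S_k$, $\vec{R}_k$, and hence of $\vec{H}_k$, by their values on the annular boundary. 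Summing the resulting geometric decay across all dyadic scales gives the no-neck estimate. The corrective terms $-4\pi\theta_t$ attached to the inverted bubbles come from the transformation of $W$ under inversion at a point of integer density, namely $W(\iota_{p_t}\circ\zeta_t)=W(\zeta_t)-4\pi\theta_t$. The delicate point throughout is that classical Wente estimates degenerate on long annuli of shrinking conformal modulus, so the argument must crucially use the conservation-law structure together with the sharp Lorentz-space embeddings to obtain bounds that are summable across all neck scales.
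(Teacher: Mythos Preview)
The paper does not prove this theorem: it is quoted as an abridged version of theorems I.2 and I.3 of Bernard--Rivi\`ere \cite{bibenergyquant} and is used only as background for the paper's own results. There is therefore no ``paper's own proof'' to compare against.

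That said, your sketch is a faithful outline of the Bernard--Rivi\`ere strategy: $\varepsilon$-regularity to isolate finitely many concentration points and get $C^\infty_{\mathrm{loc}}$ convergence away from them, Sacks--Uhlenbeck rescaling to extract bubbles, removable singularity theory to compactify them onto $\s^2$, the dichotomy compact/non-compact bubble, and a no-neck argument via the conservation laws $(\Lr,S,\vec{R})$ together with $L^{2,1}$--$L^{2,\infty}$ duality on dyadic annuli. The identification of the $-4\pi\theta_t$ correction with the inversion formula is also correct.

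Where your outline remains genuinely incomplete is exactly the point you flag yourself: on a degenerating annulus the Wente constants and the Harnack control of $e^\lambda$ are not uniform in the conformal class, so ``summing geometric decay across dyadic scales'' is not automatic. The actual argument in \cite{bibenergyquant} (and reproduced in part in Section~\ref{lasection3} of this paper) requires the refined Harnack estimate $|\lambda(x)-d\log|x|-A|\le C$ on the neck, a careful analysis of the circular averages $\Lr_t$ via the explicit structure of $\nabla\Lr$ in (\ref{eqauxsurleL}), and a sign-dependent normalization of $\Lr$ according to whether $d\le -1$ or $d\ge -1$. Without these ingredients your neck step would not close. As a high-level plan your proposal is correct, but the heart of the proof lies precisely in making that last paragraph rigorous.
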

While theorem \ref{energyquandberriv} states an energy quantization for $W$, equality VIII.8 in \cite{bibenergyquant} offers in fact a stronger energy quantization for $E$. The $a^i$ are the aforementioned concentration points and the $\eta_s$ and $\iota_{p_t} \circ \zeta_t$ are the bubbles blown on those concentration points. More precisely, the $\eta_s$ are the compact bubbles, and the $\iota_{p_t} \circ \zeta_t$ the non-compact ones.  Non-compact bubbles stand out as a consequence of the conformal invariance of the problem (see \cite{MR2876249} to compare with the bubble tree extraction in the constant mean curvature framework).  One might notice that $W( \iota_{p_t} \circ \zeta_t)= W( \zeta_t) - 4 \pi \theta_t$, and deduce that if $W( \zeta_t) = 4 \pi \theta_t$, then the bubble $\iota_{p_t} \circ \zeta_t$ is minimal. This case, which we will refer to as \emph{minimal bubbling} will be of special interest to us in this article. Further, if there is only one bubble at a given concentration point  we will call the bubbling \emph{simple}. 
 Figures \ref{scpm} and \ref{scpmbis} illustrate two a priori possible bubbling configurations: figure \ref{scpm} presents a simple minimal bubble, while figure \ref{scpmbis} displays a bubble tree.

These bubble  trees have been studied with success: works  from Y. Li in \cite{MR3511481} proved that bubble trees cannot be embedded, and P. Laurain and T. Rivière  (see  \cite{MR3843372}) ensured that compact simple bubbles cannot appear. Non-compact bubbling thus remains the only simple bubbling to consider, with minimal simple bubbling being a prominent example (with the lowest total energy) and the main subject of the present paper.  As an illustration, one can keep in mind the configuration of figure \ref{scpm}: an Enneper surface (the bubble) is parametrized over a disk and scaled down with a dilation to be glued on the branch point of a branched Willmore surface (the limit surface: for instance, an inverted Chen-Gackstatter torus as represented in figure \ref{scpm}, but one could also imagine an inverted L\'opez surface).

We now state our main result.
\begin{theo}
\label{convminibubbleintro}
Let  $\Phi^\varepsilon \,: \, \D \rightarrow \R^3$ a sequence of conformal, weak, Willmore immersions , of Gauss map $\n^\varepsilon$, mean curvature $H^\varepsilon$ and conformal factor $\lambda^\varepsilon$, of parameter $\varepsilon>0$.
We assume 
\begin{enumerate}
\item
\label{hypothesis1}
$\int_\D \left| \nabla \n^\varepsilon \right|^2 dz \le M < \infty$,
\item 
\label{hypothesis2}
$\left\| \nabla \lambda^\varepsilon \right\|_{L^{2, \infty} \left( \D \right) } \le M$,
\item
\label{hypothesis3}
$\displaystyle{\lim_{R \rightarrow \infty}  \left( \lim_{\varepsilon \rightarrow 0} \int_{\D_{\frac{1}{R}} \backslash \D_{\varepsilon R} } \left| \nabla \n^\varepsilon \right|^2 dz \right) = 0}$,
\item
\label{hypothesis4}
$\Phi^\varepsilon \rightarrow \Phi^0$ in $C^\infty_{loc} \left( \D \backslash \{ 0 \} \right)$, with $\Phi^0$ a branched Willmore immersion on $\D$,
\item
\label{hypothesis5}
There exists $C^\varepsilon >0 $ such that
$\frac{\Phi^\varepsilon \left( \varepsilon . \right) - \Phi^\varepsilon (0) }{C^\varepsilon} \rightarrow \Psi $ in $C^\infty_{loc} \left( \C \right)$, with $\Psi$ a minimal immersion (that is of mean curvature $H_\Psi= 0$). 
\end{enumerate}
Then 
$\Phi^\varepsilon \rightarrow \Phi^0$ $C^{2,\alpha} \left( \D \right)$ for all $\alpha <1$.
\end{theo}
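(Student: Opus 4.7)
The plan is to reduce the statement to a local claim near the single concentration point $0$ and then appeal to the new $\varepsilon$-regularity result that replaces the total curvature smallness in Theorem \ref{epsilonregularityRivierecontrolphi} by a smallness assumption on the local Willmore energy $\int H^2 \, d\mathrm{vol}_g$. Outside any fixed neighborhood of $0$, the claimed $C^{2,\alpha}$ convergence is already contained in hypothesis \ref{hypothesis4}, so only the behavior at the origin is at issue.

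The central observation is that, although the bubble $\Psi$ carries nonzero total curvature, its Willmore energy vanishes since $H_\Psi = 0$. I would first show that for every $\delta > 0$ one can choose $R$ large, $r \le 1/R$, and $\varepsilon_1 > 0$ small such that
\begin{equation*}
\int_{\D_r} (H^\varepsilon)^2 \, d\mathrm{vol}_{g^\varepsilon} < \delta \quad \text{for all } \varepsilon < \varepsilon_1.
\end{equation*}
To verify this, split $\D_r = \D_{\varepsilon R} \cup (\D_r \setminus \D_{\varepsilon R})$. At the bubble scale, the conformal invariance of $W$ together with the smooth convergence in hypothesis \ref{hypothesis5} give
\begin{equation*}
\int_{\D_{\varepsilon R}} (H^\varepsilon)^2 d\mathrm{vol}_{g^\varepsilon} = \int_{\D_R} (\tilde H^\varepsilon)^2 d\mathrm{vol}_{\tilde g^\varepsilon} \longrightarrow \int_{\D_R} H_\Psi^2 d\mathrm{vol}_{g_\Psi} = 0,
\end{equation*}
where $\tilde \Phi^\varepsilon(y) = (\Phi^\varepsilon(\varepsilon y) - \Phi^\varepsilon(0))/C^\varepsilon$. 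On the annular part, the pointwise bound $H^2 d\mathrm{vol}_g \le \tfrac{1}{2} |\nabla \n|^2 dz$ (in conformal coordinates), combined with hypothesis \ref{hypothesis3} applied through the inclusion $\D_r \setminus \D_{\varepsilon R} \subset \D_{1/R} \setminus \D_{\varepsilon R}$, provides the remaining smallness.

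Equipped with this smallness, and using hypotheses \ref{hypothesis1} and \ref{hypothesis2} as structural (rather than smallness) inputs, the new $\varepsilon$-regularity should yield uniform $L^\infty$ bounds on $\nabla^k \n^\varepsilon$ and $e^{-\lambda^\varepsilon}\nabla^k \Phi^\varepsilon$ on $\D_{r/2}$, analogous to the conclusion of Theorem \ref{epsilonregularityRivierecontrolphi} but under the weaker Willmore-energy hypothesis. Combined with the $C^\infty_{loc}(\D \setminus \{0\})$ convergence from hypothesis \ref{hypothesis4}, a routine Ascoli--Arzelà and interpolation argument then upgrades the convergence up to the origin to $C^{2,\alpha}(\D)$ for any $\alpha < 1$.

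The main obstacle is the underlying $\varepsilon$-regularity itself, which constitutes the technical heart of the paper. Rivière's proof of Theorem \ref{epsilonregularityRivierecontrolphi} exploits the Jacobian structure of system (\ref{systemenRSPhiannexi}) together with Wente-type estimates, for which smallness of $\int |\nabla \n|^2$ is the natural input. Replacing it by smallness of $\int H^2$ forces a reworking of the chain of estimates: one must first control $\vec L$, whose definition in (\ref{LRSintro}) involves both $\nabla \vec H$ and $\nabla^\perp \n \times \vec H$, by extracting integrability of $\vec H$ and its derivatives from the Willmore energy via the Willmore equation, while the merely bounded total curvature enters as an auxiliary bound to tame the $\nabla^\perp \n \times \vec H$ term in an adequate $L^p$; only then can the Wente arguments on $S$ and $\vec R$ be propagated to recover the pointwise conclusions.
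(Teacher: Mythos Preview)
Your reduction to a local claim near $0$ and the smallness argument for $\int (H^\varepsilon)^2 d\mathrm{vol}_{g^\varepsilon}$ on $\D_r$ are correct and match the paper. The gap is in the next step, where you write that ``the new $\varepsilon$-regularity should yield uniform $L^\infty$ bounds'' with hypotheses \ref{hypothesis1} and \ref{hypothesis2} used only as structural inputs. That fails as stated: the $\varepsilon$-regularity theorem that assumes only small $\|H\nabla\Phi\|_{L^2}$ (Theorem \ref{epsilonregularitehnablaphifaibleintro}) produces constants depending on the parameter $r_0$ of (\ref{ler0}), which measures how concentrated $\nabla\n$ is. In the present setting $r_0^\varepsilon \simeq \varepsilon \to 0$, so a direct application gives estimates that blow up. This is precisely why the paper isolates the more flexible version, Theorem \ref{epsilonregularitehnablaphifaibleLL2inftyi}, whose constants depend instead on an a priori bound $\|\Lr^\varepsilon e^{\lambda^\varepsilon}\|_{L^{2,\infty}(\D_{r'})} \le C_1 \|H^\varepsilon\nabla\Phi^\varepsilon\|_{L^2(\D)}$.

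The missing idea is therefore a uniform-in-$\varepsilon$ control of $\Lr^\varepsilon e^{\lambda^\varepsilon}$ across the concentration point, and this is where the bubble--neck decomposition must be applied to $\Lr$, not merely to $H$. On the bubble region $\D_{4R_0\varepsilon}$ one observes that the rescaled maps converge smoothly to $\Psi$, hence the rescaled $r_0$ is bounded below by a fixed $r_0^\Psi>0$ and Theorem \ref{epsilonreularitylintro} gives a uniform $L^{2,\infty}$ bound there. On the neck $\D_{1/(2R_0)}\setminus\D_{2R_0\varepsilon}$ one needs a separate annular estimate on $\Lr$ that is \emph{independent of the conformal class} (Theorem \ref{ll2infinianneau}); this uses pointwise bounds on $H$ and $\nabla H$ from $\varepsilon$-regularity on dyadic subannuli together with the Harnack-type control (\ref{leharnackdanslecou}) of $e^\lambda$ by $|x|^d$. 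Finally the two constants $\vec{\mathcal L}_1^\varepsilon, \vec{\mathcal L}_2^\varepsilon$ arising on neck and bubble are compared on the overlap and shown to differ by $O(\|H^\varepsilon\nabla\Phi^\varepsilon\|_{L^2})$. Only after this gluing does Theorem \ref{epsilonregularitehnablaphifaibleLL2inftyi} apply with uniform constants and yield the $W^{3,p}$ (hence $C^{2,\alpha}$) bounds you want. Your last paragraph gestures at controlling $\Lr$ but does not identify the degeneration of $r_0$ or the need for the annular lemma, which is the technical heart of the argument.
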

Such assumptions are natural if we consider sequences of Willmore immersions of a compact Riemann surface with uniformly bounded total curvature and such that the conformal class of the induced metric is in a compact of the moduli space. Indeed, thanks to theorem \ref{energyquandberriv}, such a sequence $\xi^k$ converges smoothly away from concentration points. 
  Then, in a conformal chart centered on such a point, $\xi^k$ yields a sequence of conformal, weak Willmore immersions $\Phi^k \,: \, \D \rightarrow \R^3$ converging smoothly away from the origin (i.e. hypothesis \ref{hypothesis4}). Hypotheses \ref{hypothesis1} and  \ref{hypothesis2} stand if we choose proper conformal charts (see theorem \ref{theooncontrolelegraddufactconf} below for more details). 
Hypothesis \ref{hypothesis5} then simply specifies that we consider the case where there is only one simple minimal bubble which concentrates at an $\varepsilon_k$ scale. For simplicity's sake we have reparametrized our sequence of immersion by the concentration scale $\varepsilon$. Hypothesis \ref{hypothesis3} is then 
inequality VIII.8 in \cite{bibenergyquant}. An immediate corollary is  the following convergence theorem, which is an improvement over previous convergence results.

\begin{figure}[!h]
\centering
\begin{tikzpicture}[scale=0.75]
\begin{scope}[xshift=-3.4cm, yshift=5cm]
  \begin{axis}[axis lines=none, 
    xmin=-200,xmax=200,
    ymin=-200,ymax=200,
    zmin=-200,zmax=200]
     \addplot3[
         surf,
         colormap/cool,
         domain=0:9,
         y domain=0:2*pi,
         z buffer=sort
       ]
       ( { x*cos(deg(y)) -(x^3 *cos(3*deg(y)))/3}, 
         { -x*sin(deg(y)) -(x^3 *sin(3*deg(y)))/3}, 
         {(x^2)*cos(2*deg(y))}
       );
  	\end{axis}
\end{scope}
\draw [->] (0,6.05) -- (0,3);
\draw (1, 4.5) node {$\varepsilon^3 E\left( \frac{.}{\varepsilon} \right)$};
\draw (0,2.5) circle[radius=2pt];
 \fill (0,2.5) circle[radius=2pt];
\draw (0, 2.25) node[below] {triple branch point};
\begin{scope}[xshift=-3.40cm, yshift=-0.05cm]
  \begin{axis}[axis lines=none, 
    xmin=-20,xmax=20,
    ymin=-20,ymax=20,
    zmin=-20,zmax=20]
     \addplot3[
         surf,
         colormap/cool,
         samples=40,
         domain=0:9,
         y domain=0:2*pi,
         z buffer=sort
       ]
       ( {1/100* x*cos(deg(y)) -1/100*(x^3 *cos(3*deg(y)))/3}, 
         { -1/100*x*sin(deg(y)) -1/100*(x^3 *sin(3*deg(y)))/3}, 
         {1/100*(x^2)*cos(2*deg(y))}
       );
  	\end{axis}
\end{scope}
\draw (-6.5,0) .. controls (-6.5,2) and (-1.5,2.5) .. (0,2.5);
\draw[xscale=-1] (-6.5,0) .. controls (-6.5,2) and (-1.5,2.5) .. (0,2.5);
\draw[rotate=180] (-6.5,0) .. controls (-6.5,2) and (-1.5,2.5) .. (0,2.5);
\draw[yscale=-1] (-6.5,0) .. controls (-6.5,2) and (-1.5,2.5) .. (0,2.5);
\draw (-2,.2) .. controls (-1.5,-0.3) and (-1,-0.5) .. (0,-.5) .. controls (1,-0.5) and (1.5,-0.3) .. (2,0.2);
\draw (-1.75,0) .. controls (-1.5,0.3) and (-1,0.5) .. (0,.5) .. controls (1,0.5) and (1.5,0.3) .. (1.75,0);
\end{tikzpicture}
\caption{Desingularizing the inversion of a Chen-Gackstatter surface with a piece of Enneper.}
\label{scpm}
\end{figure}
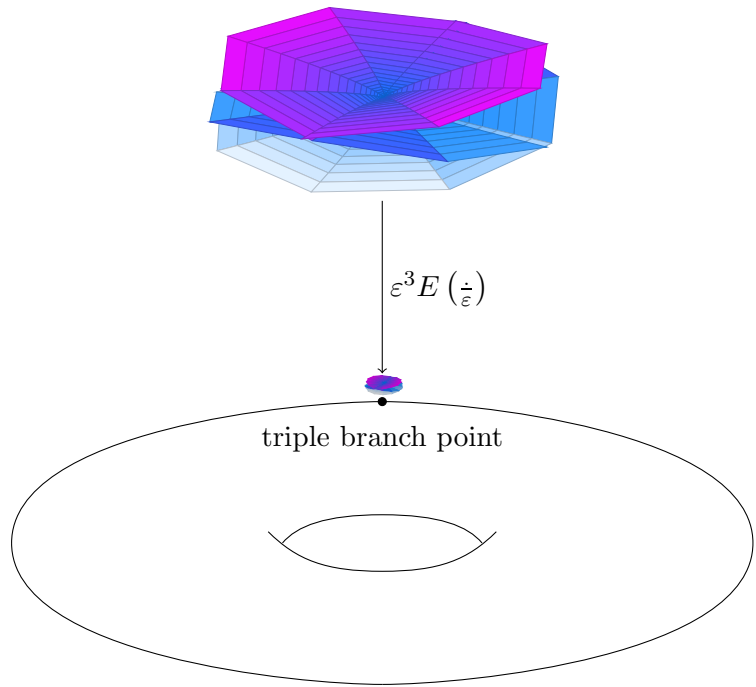

\begin{figure}[!h]
\centering
\begin{tikzpicture}[scale=0.75]
\begin{scope}[xshift=-.25cm,yshift=2.6cm]
\draw (0.5, .5) arc (180:270: 0.5);
\draw (-0.5, 0) arc (270:360: 0.5);
\draw (2,.5) arc (0:90: 0.5);
\draw (1.5,1.5)  arc (270:360: 0.5);
\begin{scope}[xshift=0.5cm]
\draw[rotate=180] (0.5, -2) arc (180:270: 0.5);
\draw[rotate=180] (-.5, -2.5) arc (270:360: 0.5);
\draw[rotate=180] (2,-2) arc (0:90: 0.5);
\draw[rotate=180] (1.5,-1)  arc (270:360: 0.5);
\end{scope}
\draw  (0.5, 2) .. controls (0.5,1.5) and (0.5,1.5) .. (1.5, 1.5);
\draw  (0, 2) .. controls (0,1.5) and (0,1.5) .. (-1, 1.5);
\draw  (0, 0.5) .. controls (0,1) and (0,1) .. (-1, 1);
\draw[dotted] (1,1) arc (-90:90: 0.05 and 0.25);%
\draw (1,1.5) arc (90:270: 0.05 and 0.25);
\draw[dotted] (-.5,1) arc (-90:90: 0.05 and 0.25);
\draw (-.5,1.5) arc (90:270: 0.05 and 0.25);
\draw  (0.5, 0.5) .. controls (0.5,1) and (0.5,1) .. (1.5, 1);
\draw[dotted] (0.5,0.5) arc (0:180: 0.25 and 0.05);
\draw (0,0.5) arc (180:360: 0.25 and 0.05);
\draw[dotted] (0.5,2) arc (0:180: 0.25 and 0.05);
\draw (0,2) arc (180:360: 0.25 and 0.05);
\draw (-1.6,2.15) arc(45:315:1.2);
\draw(-1.6,2.15) arc(90:270:0.05 and 0.85);
\draw[dotted] (-1.6,.45) arc(-90:90:0.05 and 0.85);
\draw[rotate=180] (-2.1,-0.4) arc(45:315:1.2);
\draw[rotate=180] (-2.1,-0.4) arc(90:270:0.05 and 0.85);
\draw[rotate = 180, dotted] (-2.1,-2.13) arc(-90:90:0.05 and 0.85);
\draw[rotate=270] (-2.7,1.1) arc(45:315:1.2);
\draw[rotate=270] (-2.7,1.1) arc(90:270:0.05 and 0.85);
\draw[rotate = 270, dotted] (-2.7,-0.6) arc(-90:90:0.05 and 0.85);
\end{scope}
\draw (-3.5,0) .. controls (-3.5,2) and (-1.5,2.5) .. (0,2.5);
\draw[xscale=-1] (-3.5,0) .. controls (-3.5,2) and (-1.5,2.5) .. (0,2.5);
\draw[rotate=180] (-3.5,0) .. controls (-3.5,2) and (-1.5,2.5) .. (0,2.5);
\draw[yscale=-1] (-3.5,0) .. controls (-3.5,2) and (-1.5,2.5) .. (0,2.5);
\draw (-2,.2) .. controls (-1.5,-0.3) and (-1,-0.5) .. (0,-.5) .. controls (1,-0.5) and (1.5,-0.3) .. (2,0.2);
\draw (-1.75,0) .. controls (-1.5,0.3) and (-1,0.5) .. (0,.5) .. controls (1,0.5) and (1.5,0.3) .. (1.75,0);
\draw (0,2.5) circle[radius=2pt];
 \fill (0,2.5) circle[radius=2pt];
\end{tikzpicture}
\caption{ Using a minimal surface to glue a Clifford torus and $3$ spheres.}
\label{scpmbis}
\end{figure}
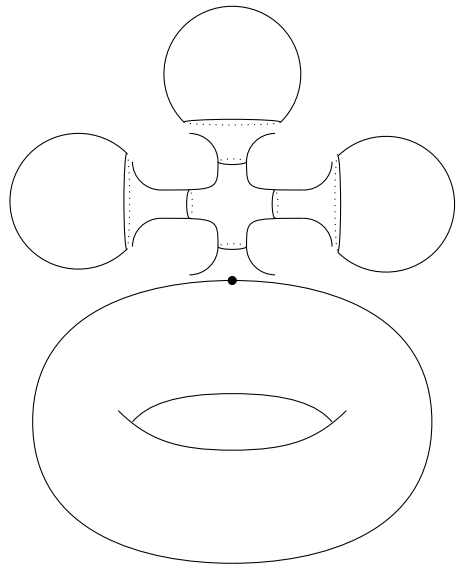

\begin{cor}
\label{lecorintro}
Let $\Phi_k$ be a sequence of Willmore immersions of a closed surface $\Sigma$ satisfying the hypotheses of theorem \ref{energyquandberriv}. We further assume that at each concentration point, a single minimal bubble is blown.
Then 
$\Phi_k \rightarrow \Phi^0$ $C^{2,\alpha} \left( \Sigma \right)$ for all $\alpha<1$.
\end{cor}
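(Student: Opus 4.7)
The plan is to localize around each concentration point and invoke Theorem \ref{convminibubbleintro} there, while relying on Theorem \ref{energyquandberriv} away from concentration points. By Theorem \ref{energyquandberriv}, a subsequence of $\Phi_k$ converges in $C^\infty_{\mathrm{loc}}\bigl(\Sigma\setminus\{a^1,\dots,a^n\}\bigr)$ to a branched Willmore immersion $\Phi^0$, which in particular gives $C^{2,\alpha}$ convergence on every compact subset of $\Sigma\setminus\{a^1,\dots,a^n\}$. It therefore suffices to upgrade the convergence to $C^{2,\alpha}$ on a small neighborhood of each $a^i$.

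Fix such a concentration point $a^i$ and choose conformal charts (adapted to the converging conformal structures, which stay in a compact of the moduli space) sending $a^i$ to $0 \in \mathbb{D}$, thus producing a sequence of conformal weak Willmore immersions $\Phi^\varepsilon : \mathbb{D} \to \mathbb{R}^3$ parametrized by the bubble's concentration scale $\varepsilon=\varepsilon_k\to 0$. I will verify the five hypotheses of Theorem \ref{convminibubbleintro} in turn. Hypothesis \ref{hypothesis1} comes from the bounded total curvature, which is a consequence of the bounded Willmore energy, bounded topology and identity (\ref{courbtot}). Hypothesis \ref{hypothesis2} on the $L^{2,\infty}$ control of $\nabla\lambda^\varepsilon$ follows from the compactness of the conformal class in moduli space, using the conformal chart theorem referenced in the excerpt (theorem \ref{theooncontrolelegraddufactconf}), which is the control appearing in inequality VIII.8 of \cite{bibenergyquant}. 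Hypothesis \ref{hypothesis4} is exactly the $C^\infty_{\mathrm{loc}}$ convergence away from $a^i$ provided by Theorem \ref{energyquandberriv}. Hypothesis \ref{hypothesis5}, the existence of a minimal blow-up profile $\Psi$, is precisely the hypothesis of a single minimal bubble at $a^i$: the bubble $\iota_{p}\circ\zeta$ is minimal, and after composing with $\iota_p^{-1}$ we recover the minimal sphere $\zeta$ as the limit of the suitably rescaled $\Phi^\varepsilon$.

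The most delicate point is hypothesis \ref{hypothesis3}, the no-neck condition
\[
\lim_{R\to\infty}\lim_{\varepsilon\to 0}\int_{\mathbb{D}_{1/R}\setminus \mathbb{D}_{\varepsilon R}}|\nabla \n^\varepsilon|^2\,dz = 0.
\]
This is where the simple-bubbling assumption is used essentially. Since exactly one bubble is blown at $a^i$, the energy-quantization identity of Theorem \ref{energyquandberriv} (and, more precisely, the stronger quantization of the total curvature $E$ alluded to by the author just after that theorem, equality VIII.8 in \cite{bibenergyquant}) forces the total curvature on the annular neck region between scale $1/R$ and scale $\varepsilon R$ to go to zero as $\varepsilon\to 0$ and then $R\to\infty$: otherwise, some of the missing $E$-mass would have to concentrate in the neck and yield either a non-trivial sub-bubble or a non-trivial limit in an intermediate scale, contradicting the single-bubble assumption. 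This is exactly hypothesis \ref{hypothesis3}.

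Once the five hypotheses are in place, Theorem \ref{convminibubbleintro} yields $\Phi^\varepsilon \to \Phi^0$ in $C^{2,\alpha}(\mathbb{D})$ for every $\alpha<1$, i.e.\ $C^{2,\alpha}$ convergence on a full neighborhood of $a^i$. Doing this at every concentration point $a^1,\dots,a^n$ and patching with the $C^\infty_{\mathrm{loc}}$ convergence of Theorem \ref{energyquandberriv} on the complement gives $\Phi_k\to \Phi^0$ in $C^{2,\alpha}(\Sigma)$, proving the corollary. The only real work beyond assembling these ingredients is the careful verification of hypothesis \ref{hypothesis3}, which I expect to be the main technical obstacle; everything else is a bookkeeping of conformal charts and of the bubble-tree decomposition.
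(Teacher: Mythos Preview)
Your proposal is correct and follows essentially the same route as the paper, which treats the corollary as an immediate consequence of Theorem \ref{convminibubbleintro}: localize in a conformal chart at each concentration point, check hypotheses \ref{hypothesis1}--\ref{hypothesis5} one by one (with \ref{hypothesis3} coming from the $E$-energy quantization, equality VIII.8 of \cite{bibenergyquant}), apply Theorem \ref{convminibubbleintro}, and glue with the $C^\infty_{\mathrm{loc}}$ convergence away from the $a^i$. One small slip: in your verification of hypothesis \ref{hypothesis5} you write that composing with $\iota_p^{-1}$ recovers ``the minimal sphere $\zeta$'', but $\zeta$ itself need not be minimal---it is the non-compact bubble $\iota_p\circ\zeta\circ\pi^{-1}$ that is minimal, and this is already the blow-up limit produced by Theorem \ref{energyquandberriv}, so no further composition is needed to obtain $\Psi$.
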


In theorem  \ref{convminibubbleintro} and corollary \ref{lecorintro}, the convergence is weaker than in theorem \ref{energyquandberriv}, because they present a convergence result \emph{across the concentration points}.  The convergence is of course still smooth away from the concentration points. However, while at these points the surfaces behave more smoothly than expected, we cannot a priori deduce a $C^\infty$ convergence across them. Indeed, we will show below that the structure of the equations (weak controls on $\Ar$) limits us to a $W^{3, p}$ control on $\Phi$, from which the $C^{2, \alpha}$ result is deduced (see \eqref{lestimeefinaleenPhi} below). While the $W^{3, p}$ control comes naturally in the weak framework,  we chose to present a $C^{2,\alpha}$ convergence result, as it seems more telling in such a geometric situation.

Theorem \ref{convminibubbleintro} and corollary \ref{lecorintro} are to be viewed in the context of other studies of Willmore compactness. 
As has been mentioned, simple bubbling is necessarily non-compact. Further, in \cite{MR3843372}, the authors proved compactness below  for Willmore immersions satisfying $W<12\pi$. The main candidate to realize this threshold would be the previously described  scaled down simple Enneper bubble glued on the branch point of a  Chen-Gackstatter torus (see figure \ref{scpm}). Theorem \ref{convminibubbleintro} represents a first step in understanding such bubbling. If one could prove it cannot occur, the compactness ceiling would then be increased.  
\newline
\textbf{ Added in Proof} During the reviewing process, the author applied and furthered these studies  (in \cite{10.1093/imrn/rnaa079}). He proved that the configuration displayed in figure \ref{scpm} was impossible, and that the regularity found increased the compactness ceiling to sequences of energy below or equal $12\pi$. On the other hand, the  first part of \cite{10.1093/imrn/rnaa079} displays an explicit example of Willmore minimal bubbling (using an Enneper surface to desingularize the branch point of an inverted L\'opez surface) which can illustrate this article's studies.

Theorem \ref{convminibubbleintro} will be proven through a modification of theorem \ref{epsilonregularityRivierecontrolphi}. In the case of minimal bubbling, $\nabla \n$ concentrates, but $H\nabla \Phi$ does not. We will then aim to prove an $\varepsilon$-result  replacing the small total curvature control of theorem \ref{epsilonregularityRivierecontrolphi} by a small Willmore energy control.
Studying the proof of theorem \ref{epsilonregularityRivierecontrolphi} reveals that hypothesis (\ref{petitenenergydanslepsilonregularite}) is used twice. 

 The first time is to show a Harnack inequality on the conformal factor $\lambda$ (following work from F. Hélein, see \cite{bibharmmaps}, or \cite{bibmulsve} for a different treatment by S. M{\"u}ller and V. {\v S}ver{\'a}k), and deduce a $L^{2,\infty}$ control on the first Willmore quantity $\vec{L}$. This Harnack inequality stems from putting the classical Liouville equation in divergence form with a local Coulomb frame, and applying Wente's lemmas. Controlling this frame requires a small estimate on $\nabla \n$ that cannot be avoided. However, this can be done with some flexibility. For instance, on disks of bounded (not necessarily small) $\nabla \n$ energy, one can extend these results up to counting the number of small energy disks needed to cover the domain. To this end, we introduce 
\begin{equation}\label{ler0}r_0 =\frac{1}{\rho} \inf \left\{ s \left| \int_{B_s(p)} \left| \, \nabla \n \right|^2 = \frac{4 \pi}{3}, \: \forall p \in \D_\rho \text{ s.t. } B_s(p) \subset \D_\rho \right. \right\}.\end{equation}
This parameter marks how relatively small a ball has to be to ensure that it does not contain too much energy, and its inverse will bound the number of balls with small energy covering the disk. Alternatively, in the framework of theorem \ref{energyquandberriv}, it measures how concentrated $\nabla \n$ is on a disk.

The second use of hypothesis (\ref{petitenenergydanslepsilonregularite}) lies in the exploitation of the peculiar Jacobian form of system (\ref{systemenRSPhiannexi}) to break its criticality. 
We will show in this article  that it can be rewritten into 
\begin{equation}
\label{systemenRSPhiannexmieux}
\left\{
\begin{aligned}
\Delta S &=  \left\langle H \nabla \Phi, \nabla^\perp \vec{R} \right\rangle \\
\Delta \vec{R} &= - H \nabla \Phi \times \nabla^\perp \vec{R} - \nabla^\perp S H \nabla \Phi\\
\Delta \Phi &= \frac{1}{2} \left( \nabla^\perp S. \nabla \Phi + \nabla^\perp \vec{R} \times \nabla \Phi \right).
\end{aligned}
\right.
\end{equation}
This new equivalent system, along with some tight estimates in Lorentz spaces will yield an $\varepsilon$-regularity result with a small Willmore energy hypothesis.
\begin{theo}
\label{epsilonregularitehnablaphifaibleintro}
Let $\Phi \in \mathcal{E}\left(\D \right) $ satisfy the hypotheses of theorem \ref{epsilonreularitylintro}.
Then there exists $\varepsilon_0'$  depending only on $C_0$ such that if $$  \left\| H \nabla \Phi \right\|_{L^2 \left( \D \right)}  \le \varepsilon_0',$$ then for any $r< 1$ there exists a constant  $ C \in \R$  depending on  $r$, $C_0$, $p$ and  $r_0$ (defined in (\ref{ler0})) such that 
$$\left\| H \nabla \Phi \right\|_{L^\infty \left( \D_{ r  } \right)} \le  C { \left\| H \nabla \Phi \right\|_{L^2 \left( \D \right)}},$$ 
and 
$$\left\| \nabla \Phi \right\|_{W^{2,p} \left( \D_{ r  } \right)} \le     C \| \nabla \Phi \|_{L^2 \left( \D \right) } $$
for all $p<\infty$.
\end{theo}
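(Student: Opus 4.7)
The plan is to follow the strategy of Rivière's proof of theorem \ref{epsilonregularityRivierecontrolphi} but to exploit the rewritten system (\ref{systemenRSPhiannexmieux}) to swap the smallness of $\int|\nabla\n|^2$ for the smallness of $\|H\nabla\Phi\|_{L^2}$. First, from $\|\nabla\lambda\|_{L^{2,\infty}}\leq C_0$ together with the covering-by-small-energy-disks counted by $r_0$ (see (\ref{ler0})), one obtains by the Coulomb frame and Wente argument a Harnack inequality for $\lambda$ on any $\D_r$ with $r<1$. This gives $e^{\pm\lambda}\in L^\infty(\D_r)$ and in particular $\nabla\Phi\in L^\infty(\D_r)$, together with an $L^{2,\infty}$ estimate on $\vec{L}$ and, through the definitions (\ref{LRSintro}), on $\nabla S$ and $\nabla\vec{R}$; all these estimates depend only on $C_0$, $r$ and $r_0$.

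The heart of the argument is a Lorentz-space bootstrap on (\ref{systemenRSPhiannexmieux}). Fix a slightly smaller disk and split $S=S_0+S_1$, $\vec{R}=\vec{R}_0+\vec{R}_1$ where $S_0,\vec{R}_0$ are harmonic extensions (hence bounded in $L^\infty_{\mathrm{loc}}$ by the previous $L^{2,\infty}$ estimates on gradients), and $S_1,\vec{R}_1$ solve the Dirichlet problem with zero trace. The crucial observation is that in (\ref{systemenRSPhiannexmieux}) the right-hand sides for $S$ and $\vec{R}$ are bilinear products against $H\nabla\Phi$, rather than the Jacobian structure of (\ref{systemenRSPhiannexi}). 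Using the Lorentz--Hölder embedding $L^2\cdot L^{2,\infty}\hookrightarrow L^{1,2}$ combined with the improved Calderón--Zygmund estimate $\Delta^{-1}:L^{1,2}\to W^{1,2}$, the $L^{2,\infty}$ bounds on $\nabla S_1,\nabla\vec{R}_1$ self-improve to $L^2$ bounds multiplied by $\|H\nabla\Phi\|_{L^2}\leq\varepsilon_0'$. Choosing $\varepsilon_0'$ small enough absorbs that factor back into the left-hand side, and a further iteration (now with genuine $L^2$ control on $\nabla S,\nabla\vec{R}$) upgrades them to $L^{2,1}_{\mathrm{loc}}$.

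From there the scheme becomes subcritical. With $\nabla S,\nabla\vec{R}\in L^{2,1}_{\mathrm{loc}}$ and $H\nabla\Phi\in L^2$, the right-hand sides of (\ref{systemenRSPhiannexmieux}) lie in $L^{1,1}$, which upgrades $\nabla S,\nabla\vec{R}$ to $L^p_{\mathrm{loc}}$ for every $p<\infty$ after one more elliptic step. Injecting this into the third equation $\Delta\Phi=\tfrac12(\nabla^\perp S\cdot\nabla\Phi+\nabla^\perp\vec{R}\times\nabla\Phi)$ with $\nabla\Phi\in L^\infty$ yields $\Phi\in W^{2,p}_{\mathrm{loc}}$, and feeding this back through (\ref{LRSintro}) to improve $\vec{L}$ and then $S,\vec{R}$ bootstraps to the claimed $\nabla\Phi\in W^{3,p}$. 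The $L^\infty$ estimate on $H\nabla\Phi$ then follows from the conformal identity $\Delta\Phi=2e^{2\lambda}\vec{H}$ combined with the bound on $\lambda$, and linearity in $\|H\nabla\Phi\|_{L^2}$ is preserved throughout because every estimate in the bootstrap is linear in this quantity once the smallness has absorbed the self-coupling.

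The main obstacle is the Lorentz-space iteration: one has to arrange that each step gains a factor $\|H\nabla\Phi\|_{L^2}$ without losing a factor in the Calderón--Zygmund inversion, and in particular push the improvement all the way down to $L^{2,1}$ rather than stopping at $L^2$. Only at the $L^{2,1}$ level does the coupling with $H\nabla\Phi$ place the right-hand sides strictly below the $L^1$-Jacobian criticality that in (\ref{systemenRSPhiannexi}) forced the total-curvature smallness of theorem \ref{epsilonregularityRivierecontrolphi}.
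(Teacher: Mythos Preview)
Your strategy is the right one, but the Lorentz iteration you describe does not actually break the criticality, and two of the specific steps fail. First, the endpoint estimate ``$\Delta^{-1}:L^{1,2}\to W^{1,2}$'' you invoke is not a standard result: O'Neil's inequality for the Riesz potential $I_1:L^{p,q}\to L^{p^*,q}$ in dimension~$2$ requires $p>1$, and at $p=1$ one only has the weak endpoint $I_1:L^1\to L^{2,\infty}$; a generic product in $L^2\cdot L^{2,\infty}$ carries no cancellation structure (it is not in the Hardy space~$\mathcal H^1$), so you cannot borrow the Fefferman--Stein improvement either. Second, even if you reached $\nabla S,\nabla\vec R\in L^{2,1}$, the claim that the right-hand sides then lie in $L^{1,1}=L^1$ and that this ``upgrades $\nabla S,\nabla\vec R$ to $L^p$ for every $p<\infty$'' is false: $L^1$~data for the Laplacian in two dimensions gives back only $\nabla u\in L^{2,\infty}$, which is \emph{exactly} the critical space. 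Your bootstrap therefore stalls at the same criticality you started from.

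The paper escapes this by using \emph{both} forms of the system simultaneously, together with a Morrey decrease. On each small ball $B_t(p)$ one writes $\Delta\sigma=\langle H\nabla\Phi,\nabla^\perp\vec R\rangle=-\langle\nabla\n,\nabla^\perp\vec R\rangle$ (these are equal by (\ref{systemenRSPhiannex}) and (\ref{systemenRSPhiannexmieux})). The Jacobian form with $\nabla\n$ feeds into Wente's $L^{2,1}$ estimate (theorem~\ref{wentel21}), giving $\|\nabla\sigma\|_{L^{2,1}}\le C\|\nabla\vec R\|_{L^2}\|\nabla\n\|_{L^2}$; the $H\nabla\Phi$ form feeds into the Calder\'on--Zygmund endpoint $\Delta^{-1}:L^1\to W^{1,(2,\infty)}$, giving $\|\nabla\sigma\|_{L^{2,\infty}}\le C\|\nabla\vec R\|_{L^2}\|H\nabla\Phi\|_{L^2}$. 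Pairing these by $L^{2,1}$--$L^{2,\infty}$ duality yields $\|\nabla\sigma\|_{L^2}^2\le C(\|\nabla\n\|_{L^2})\,\|H\nabla\Phi\|_{L^2}\,\|\nabla\vec R\|_{L^2}^2$, and it is this small factor $\|H\nabla\Phi\|_{L^2}\le\varepsilon_0'$ (with $\|\nabla\n\|_{L^2}$ merely bounded, not small) that drives the geometric decay $\|\nabla S\|_{L^2(B_{t/2})}^2+\|\nabla\vec R\|_{L^2(B_{t/2})}^2\le\frac34(\cdots)$ of (\ref{diminuerlerayonsurunepetiteboulequantifie}). The resulting Morrey estimate, via Adams-type Riesz potential bounds, gives $\nabla S,\nabla\vec R\in L^q$ for some $q>2$, and only then does the bootstrap become subcritical. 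The pointwise inequality $|H\nabla\Phi|\le\frac12|\nabla\vec R|$ (see (\ref{majorerhnablaphiparnablaR})) then transfers this $L^q$ gain to $H\nabla\Phi$ and the rest proceeds as you outlined.
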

Theorem \ref{epsilonregularitehnablaphifaibleintro} as stated makes use of the parameter $r_0$, and can be applied when $r_0$ is bounded from below, but degenerates as soon as this is no longer the case. However this dependance on $r_0$ only appears as an artefact of an estimate on $\vec{L}$ (see theorem \ref{n21controlintro} below). In fact, we will prove a less immediately eloquent but more adaptable result (see theorem \ref{epsilonregularitehnablaphifaibleLL2inftyi} below).

One has to be aware that estimates in $r_0$ will not enable us to prove theorem \ref{convminibubbleintro}. Indeed, as the energy concentrates, $r_0 \simeq \varepsilon$ goes to $0$.  However, applied to a ball of radius $\varepsilon$, theorem \ref{epsilonreularitylintro} will yield uniform estimates for $\Lr$ on the bubble $\D_\varepsilon$. One then only has to control $\Lr$ on the so-called "neck area": $\D \backslash \D_{\varepsilon}$, and combine these two to control $\Lr$ across the concentration point. This estimate will then be exploited through the equations to obtain the result.

In section \ref{lasection1} we will recall the notion of weak Willmore immersions and prove generic controls on $\Lr$, $H \nabla \Phi$ and $\nabla \n$ in Lorentz spaces. Section \ref{lasection2} will be devoted to the proof of theorem  \ref{epsilonregularitehnablaphifaibleintro} (and its more general version  \ref{epsilonregularitehnablaphifaibleLL2inftyi}) while  section \ref{lasection3} will focus on controlling $\Lr$ on annuli of degenerating conformal classes. We will conclude in section \ref{lasection4} with the proof of theorem \ref{convminibubbleintro}.

\section{First regularity results for weak Willmore immersions }
\label{lasection1}
\subsection{Weak Willmore immersions of a surface}
Let $\Sigma$ be an arbitrary closed compact two-dimensional manifold.
Let $g_0$ be a smooth "reference" metric on $\Sigma$. The Sobolev spaces $W^{k,p} \left( \Sigma, \R^3 \right)$ of measurable maps from $\Sigma$ into $\R^3$ is defined as 
$$W^{k,p} \left( \Sigma , \R^3 \right):= \left\{ f \text{ measurable: } \Sigma \rightarrow \R^3 \text{ s.t } \sum_{l= 0}^{k} \int_\Sigma \left| \nabla_{g_0}^l f \right|^p_{g_0} d\mathrm{vol}_{g_0} < \infty \right\}.$$
Since $\Sigma$ is assumed to be compact, this definition does not depend on $g_0$.

We will work with the concept of weak immersions introduced by T. Rivière, which represent the correct starting framework for studying Willmore immersions. One might notice the presentation of this notion has changed through the years (compare definition I.1 in \cite{bibanalysisaspects} with its equivalent in subsection 1.2 in \cite{MR3843372}). While we use the latter, which is sufficient for our needs, one could take slightly less demanding (albeit more complex) starting hypotheses.
\begin{de}
\label{weakimmersions}
Let $\Phi \,: \, \Sigma \rightarrow \R^3$. Let $g_\Phi = \Phi^* \xi$ be the first fundamental form of $\Phi$ and $\n$ its Gauss map.  Then $\Phi$ is called a weak immersion with locally $L^2$-bounded second fundamental form if $\Phi \in W^{1,\infty} \left( \Sigma \right)$, if there exists a constant $C_\Phi$ such that 
$$ \frac{1}{C_\Phi} g_0  \le g_\Phi  \le C_\Phi g_0,$$
and if $$\int_\Sigma \left| d \n \right|^2_{g_\Phi} d \mathrm{vol}_\Phi < \infty.$$ The set of weak immersions with $L^2$-bounded second fundamental form on $\Sigma$ will be denoted $\mathcal{E}(\Sigma)$.
\end{de}
One of the advantages of such weak immersions is that they allow us to work with conformal maps as shown by theorem 5.1.1 of \cite{bibharmmaps}.
\begin{theo}
\label{localconformalcoordinates}
Let $\Phi$ be a weak immersion from $\Sigma$ into $\R^3$ with $L^2$-bounded second fundamental form. Then for every $x \in \Sigma$, there exists an open disk $ D$ in $\Sigma$ containing $x$ and a homeomorphism $\Psi \,: \, \D \rightarrow D$ such that  $\Phi \circ \Psi$ is a conformal bilipschitz immersion. The induced metric $g = \left(\Phi \circ \Psi \right)^* \xi$
 is continuous. 
\end{theo}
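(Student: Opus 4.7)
The plan is to adapt Hélein's moving-frame construction from \cite{bibharmmaps} to this weak-immersion setting. First, I localize: pick $x \in \Sigma$, choose a smooth chart around $x$, and use the bilipschitz bound $C_\Phi^{-1} g_0 \le g_\Phi \le C_\Phi g_0$ together with absolute continuity of the integral $D \mapsto \int_D |d\n|^2 \, d\mathrm{vol}_\Phi$ to shrink to a small topological disk $D$ in this chart on which the total curvature $\int_D |d\n|^2 \, d\mathrm{vol}_{g_\Phi}$ falls below the Hélein threshold (say $8\pi/3$). On such a $D$, the induced metric $g_\Phi$ is bilipschitz equivalent to the Euclidean metric, the tangent bundle is trivial, and the Gauss map is well-defined in $L^\infty$ with $d\n \in L^2$.

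Next, I build isothermal coordinates by way of a Coulomb frame. Thanks to the curvature smallness on $D$, Hélein's moving frame lemma produces a $W^{1,2}$ orthonormal tangent frame $(\vec{e}_1, \vec{e}_2)$ along $\Phi|_D$ satisfying the Coulomb gauge $d^{\ast_g}\langle d\vec{e}_1, \vec{e}_2\rangle_{g} = 0$, with $\|d\vec{e}_i\|_{L^2} \le C \|d\n\|_{L^2(D)}$. Let $\theta^1, \theta^2$ denote the dual coframe; a computation using the Cartan structure equations shows that $\theta^1 - i \theta^2$ is, up to the conformal factor $e^\lambda$, closed. Since $D$ is simply connected, integration gives real functions $u, v$ with $\theta^i = e^\lambda d u^i$, and the Gauss curvature equation becomes a critical Liouville-type PDE $-\Delta \lambda = K_g e^{2\lambda}$ with Jacobian right-hand side. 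Wente's lemma then yields $\lambda \in L^\infty \cap C^0 \cap W^{1,2}$ with the corresponding estimates.

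Finally, the map $u + iv : D \to \C$ is bilipschitz, as follows from the frame bounds and the $L^\infty$ control of $\lambda$; hence it is a homeomorphism onto an open set. After an affine change of variables realizing this image as $\D$, its inverse $\Psi : \D \to D$ is the claimed parameterization, and $\Phi \circ \Psi$ is a conformal bilipschitz immersion whose induced metric $e^{2\lambda}|dz|^2$ is continuous. The Gauss map of $\Phi \circ \Psi$ lies in $W^{1,2}(\D, \s^2)$ by a bilipschitz change of variables in the integral defining $\|d\n\|_{L^2}^2$. The main obstacle is Step 2: the Coulomb frame needs a global smallness of $\int |d\n|^2 d\mathrm{vol}_\Phi$ that the hypotheses do not provide, and this is precisely what forces the localization in Step 1; moreover the Liouville equation for $\lambda$ is critical, and only the Jacobian structure of its right-hand side —and thus Wente's inequality— delivers the $L^\infty$ bound needed to close the bilipschitz argument.
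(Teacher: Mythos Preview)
The paper does not prove this statement; it is quoted as theorem~5.1.1 of \cite{bibharmmaps} and used as a black box. Your sketch is precisely the outline of H\'elein's original argument (localize to small $\int|d\n|^2$, build a Coulomb moving frame, extract conformal coordinates via the structure equations, and control the conformal factor by Wente), so there is nothing to compare: you have reconstructed the cited proof rather than offered an alternative.

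One caution on your Step~2: the passage from the Coulomb frame to actual isothermal coordinates is a bit more delicate than ``$\theta^1 - i\theta^2$ is closed up to $e^\lambda$''. In H\'elein's version one first uses the Coulomb condition to write the connection $1$-form $\langle d\vec e_1,\vec e_2\rangle$ as $\ast d\mu$ for some $\mu\in W^{1,2}$, then checks that the $\R^3$-valued $1$-form $e^{-\mu}(\vec e_1 - i\vec e_2)\cdot d\Phi$ is closed, integrates it to get the holomorphic chart, and only afterwards identifies $\lambda$ and applies Wente to the Liouville equation. Your wording conflates these steps slightly; if you intend to include a full proof rather than a citation, that is the place to tighten.
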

Further, proving estimates on the Greeen function of $\Sigma$, P. Laurain and T. Rivière have shown in theorem 3.1 of  \cite{biboptimalestimates} that up to chosing a specific atlas, one could have further control on the conformal factor.
\begin{theo}
\label{theooncontrolelegraddufactconf}
Let $(\Sigma, g)$ be a closed Riemann surface of fixed genus greater than one. Let $h$ denote the metric with constant curvature (and volume equal to one in the torus case) in the conformal class of $g$ and $\Phi \in \mathcal{E}(\Sigma)$ conformal, that is: 
$$\Phi^* \xi = e^{2u} h.$$ 
Then, there exists a finite conformal atlas  $(U_i, \Psi_i)$ and a positive constant $C$ depending only on the genus of $\Sigma$, such that 
$$ \left\| \nabla \lambda_i \right\|_{L^{2, \infty} \left( V_i \right)} \le  C \left\| \nabla_{\Phi^* \xi} \n \right\|^2_{L^2 \left( \Sigma \right)},$$
with $\lambda_i = \frac{1}{2} \log \frac{ \left| \nabla \Phi \right|^2}{2}$ the conformal factor of $\Phi \circ \Psi_i^{-1}$ in $V_i = \Psi_i (U_i)$.
\end{theo}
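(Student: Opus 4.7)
The approach is to recognize that this is essentially a Liouville-equation statement on the fixed background $(\Sigma,h)$, and then translate the resulting estimate chart-by-chart. Writing $\Phi^*\xi = e^{2u}h$, the conformal transformation law for the Gauss curvature yields
\begin{equation*}
-\Delta_h u = K_{\Phi^*\xi}\, e^{2u} - K_h.
\end{equation*}
For an immersion of a surface into $\R^3$ the principal curvatures satisfy $2|K_{\Phi^*\xi}|\le |A|^2$, and $|A|^2 = |\nabla_{\Phi^*\xi}\n|^2$, so
\begin{equation*}
\|K_{\Phi^*\xi}\,e^{2u}\|_{L^1(\Sigma,h)} \;=\; \int_\Sigma |K_{\Phi^*\xi}|\,d\mathrm{vol}_{\Phi^*\xi} \;\le\; \tfrac{1}{2}\|\nabla_{\Phi^*\xi}\n\|_{L^2(\Sigma)}^2 .
\end{equation*}
The remaining term $\|K_h\|_{L^1(\Sigma,h)}$ is a purely topological constant, and by the Li--Yau lower bound $W(\Phi)\ge 4\pi$ combined with identity (\ref{courbtot}) one has $\|\nabla_{\Phi^*\xi}\n\|_{L^2}^2 \ge 16\pi - 4\pi\chi(\Sigma)$, so this constant is absorbed into a factor depending only on the genus. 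By Gauss--Bonnet the right-hand side of the Liouville equation has zero $h$-mean, so inversion of $\Delta_h$ modulo constants is well-posed.

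The second step is the elliptic inversion. Using the Green's function $G_h$ of $-\Delta_h$ on $(\Sigma,h)$, one has $\nabla_y G_h(x,\cdot) \in L^{2,\infty}(\Sigma,h)$ with a norm controlled by a constant depending on $h$; convolution against an $L^1$ source then gives, via the standard $L^1\star L^{2,\infty}\hookrightarrow L^{2,\infty}$ bound (O'Neil),
\begin{equation*}
\|\nabla u\|_{L^{2,\infty}(\Sigma,h)} \;\le\; C(h)\,\|\Delta_h u\|_{L^1(\Sigma,h)} \;\le\; C(h)\,\|\nabla_{\Phi^*\xi}\n\|_{L^2(\Sigma)}^2 .
\end{equation*}
Finally, one fixes a finite conformal atlas $(U_i,\Psi_i)$ adapted to $h$ in which the pullback of $h$ reads $e^{2\phi_i}|dz|^2$ with $\phi_i$ smooth. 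Since $\lambda_i = u\circ\Psi_i^{-1} + \phi_i$ and $\nabla\phi_i$ is bounded on $V_i$, the $L^{2,\infty}$ estimate for $u$ transfers directly to $\lambda_i$, modulo a constant depending only on the atlas.

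The main obstacle is promoting $C(h)$ into a constant $C=C(\mathrm{genus})$ uniform across the conformal class. For fixed $h$ the bound on $\nabla G_h$ is classical (subtract $-\frac{1}{2\pi}\log d_h$ to extract the singular part), but hyperbolic Green's functions on a surface of genus $\ge 2$ do blow up when the surface approaches the boundary of moduli space (pinching geodesics, cusps). The substantive content of theorem \ref{theooncontrolelegraddufactconf}, and the reason the result is due to Laurain--Rivi\`ere, is the uniform treatment of the thin parts: one decomposes $(\Sigma,h)$ into its thick and thin parts via a Margulis-type collar lemma, derives uniform Green's function estimates on hyperbolic collars and cusps, and chooses the atlas $(U_i,\Psi_i)$ with enough flexibility that the smooth conformal factor $\phi_i$ remains controlled on the degenerating regions. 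Once this uniformity in moduli is in hand, the three-step Liouville argument above closes the proof.
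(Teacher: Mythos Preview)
The paper does not give a proof of this theorem. It is stated as a quotation of theorem~3.1 of \cite{biboptimalestimates} (Laurain--Rivi\`ere) and used as a black box to justify hypothesis~(\ref{hypothesecontroleconformalfacteur2infinietphiw1infini}); no argument for it appears anywhere in the present text. There is therefore nothing in the paper to compare your proposal against.

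That said, your sketch is a faithful outline of the Laurain--Rivi\`ere argument: Liouville equation on the constant-curvature background, $L^1$ control of $K_{\Phi^*\xi}e^{2u}$ by the total curvature, absorption of the topological term via Li--Yau and Gauss--Bonnet, and then the $L^1\to L^{2,\infty}$ mapping property of $\nabla G_h$. You also correctly identify that the genuine content lies in making $C(h)$ depend only on the genus, which in \cite{biboptimalestimates} is handled by uniform Green's function estimates through the thick--thin decomposition. As a summary of the cited proof your proposal is accurate; just be aware that the paper under review claims no originality here and offers no details of its own.
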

Thus, given $\tilde \Phi \in \mathcal{E}\left( \Sigma \right)$  we can choose a conformal atlas such that, in a local chart on $\D$ of this atlas, $\tilde \Phi$ yields  $\Phi \in \mathcal{E} \left( \D \right)$ satisfying 
\begin{equation}
\label{hypothesecontroleconformalfacteur2infinietphiw1infini}
\begin{aligned}
&\left\| \nabla \lambda \right\|_{L^{2, \infty} \left( \D \right) } \le C_0.
\end{aligned}
\end{equation}
One can then systematically study any $\tilde \Phi \in \mathcal{E} \left(\Sigma \right)$ in such  local conformal charts, as a conformal bilipschitz map $\Phi \in \mathcal{E}\left(\D\right)$ satisfying (\ref{hypothesecontroleconformalfacteur2infinietphiw1infini}).

We can now introduce the notion of weak Willmore immersions (definition I.2 in \cite{bibanalysisaspects}).
\begin{de}
 Let $\Phi \in \mathcal{E} \left( \Sigma \right)$. $\Phi$  is a weak Willmore immersion if 
\begin{equation} \label{equationwillmorefaible} \mathrm{div}\left(  \nabla \vec{H} -3 \pi_{\n} \left( \nabla \vec{H} \right) + \nabla^\perp \n \times \vec{H} \right) = 0 \end{equation}
holds in a distributional sense in every conformal parametrization $\Psi  \,: \, \D \rightarrow D$ on every neighborhood $D$ of $x$ , for all $x \in \Sigma$. Here the operators $\mathrm{div}$, $\nabla$ and $\nabla^\perp = \begin{pmatrix} - \partial_y \\ \partial_x \end{pmatrix} $ are to be understood with respect to the flat metric on $\D$.
\end{de}

Equation (\ref{equationwillmorefaible}) is in fact the classical Willmore equation (\ref{lequationdewillmoreclassique}) in divergence form.
\begin{equation}
\label{lequationdewillmoreclassique}
\Delta H + \big| \Ar \big|^2 H = 0.
\end{equation}
Immersions satisfying (\ref{lequationdewillmoreclassique}) are called Willmore immersions.
The weak Willmore equation was introduced to work with weak immersions since (\ref{equationwillmorefaible}) requires less regularity than (\ref{lequationdewillmoreclassique}). However, a consequence of  theorem I.5 in \cite{bibanalysisaspects} is that weak Willmore immersions are smooth, and necessarily Willmore immersions.

\subsection{Harnack inequalities on the conformal factor}
Works by F. Hélein ensured that in disks of small energy, and that, up to a reasonable  (see (\ref{hypothesecontroleconformalfacteur2infinietphiw1infini})) assumption on $\left\| \nabla \lambda \right\|_{L^{2, \infty} \left( \D \right) }$, the conformal factor could be controlled pointwise. We here give a version from theorem 5.5 of \cite{bibpcmi}.

\begin{theo}
\label{controlconformalfactor}
Let $\Phi \in \mathcal{E}\left(\D \right)$, conformal. Let  $\n$ be its Gauss map and $\lambda$ its conformal factor.
We assume $$ \int_{\D}  \left| \nabla \n \right|^2 < \frac{8 \pi}{3},$$
and 
\begin{equation}
\label{hypothesecontroleconformalfacteur2infini}
\left\| \nabla \lambda \right\|_{L^{2,\infty} \left( \D \right)} \le C_0.
\end{equation}
Then, for any $r<1$ there exists $c \in \R$ and $C \in \R$ depending on $r$ and $C_0$ such that 
$$ \left\| \lambda - c \right\|_{L^\infty \left( \D_r \right) } \le C .$$
\end{theo}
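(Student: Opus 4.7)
The plan is to combine Hélein's Coulomb moving-frame construction with Wente's lemma and the regularity of harmonic functions having $L^{2,\infty}$ gradient. The smallness hypothesis $\int_\D |\nabla \n|^2 < 8\pi/3$ is exactly what ensures existence of a Coulomb tangent frame, while (\ref{hypothesecontroleconformalfacteur2infini}) will take care of the harmonic residual on compact subsets of $\D$.

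First, under $\int_\D |\nabla \n|^2 < 8\pi/3$, Hélein's moving-frame theorem (see \cite{bibharmmaps}) produces a Coulomb orthonormal tangent frame $(e_1,e_2) \subset W^{1,2}(\D)$ along $\Phi$, i.e. orthonormal for the pullback metric, satisfying $\mathrm{div}\langle e_1,\nabla e_2\rangle = 0$ in $\D$, together with the bound $\|\nabla e_i\|_{L^2(\D)} \le C\|\nabla \n\|_{L^2(\D)}$. The threshold $8\pi/3$ is sharp for this step, and it is the only place where it enters; this existence statement is the main technical obstacle, although the construction is by now classical.

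Next, using Cartan's structure equations for the orthonormal frame $(e_1,e_2,\n)$ together with the conformality of $\Phi$, the Liouville equation $-\Delta \lambda = K e^{2\lambda}$ can be rewritten in Jacobian form
\begin{equation*}
-\Delta \lambda = \sum_i \nabla^\perp a_i \cdot \nabla b_i,
\end{equation*}
where each $a_i,b_i$ is a scalar component of $e_1,e_2$ or $\n$, with $\|\nabla a_i\|_{L^2} + \|\nabla b_i\|_{L^2} \le C\|\nabla \n\|_{L^2(\D)}$. This is where the Coulomb gauge is used, to cancel the non-Jacobian pieces coming from $\langle e_1, de_2\rangle$.

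Finally, decompose $\lambda = \lambda_1 + \lambda_2$ on $\D$, where $\lambda_1 \in W^{1,2}_0(\D)$ solves the above Jacobian equation and $\lambda_2$ is harmonic. Wente's inequality (see e.g. \cite{bibpcmi}) yields
\begin{equation*}
\|\lambda_1\|_{L^\infty(\D)} + \|\nabla \lambda_1\|_{L^2(\D)} \le C\|\nabla \n\|_{L^2(\D)}^2 \le C.
\end{equation*}
Consequently $\nabla \lambda_2 = \nabla \lambda - \nabla \lambda_1 \in L^{2,\infty}(\D)$ with norm bounded in terms of $C_0$. Since $\lambda_2$ is harmonic, the mean-value property applied to $\nabla \lambda_2$ together with the $L^{2,1}$--$L^{2,\infty}$ duality gives $\|\nabla \lambda_2\|_{L^\infty(\D_r)} \le C(r)\|\nabla \lambda_2\|_{L^{2,\infty}(\D)}$, hence $\|\lambda_2 - \lambda_2(0)\|_{L^\infty(\D_r)} \le C(r,C_0)$. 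Setting $c := \lambda_2(0)$ and combining with the $L^\infty$ bound on $\lambda_1$ yields the claim.
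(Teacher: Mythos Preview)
Your proof is correct and follows precisely the approach the paper indicates: the paper does not prove this statement but cites it as theorem 5.5 of \cite{bibpcmi}, and in the introduction explicitly describes the method as ``putting the classical Liouville equation in divergence form with a local Coulomb frame, and applying Wente's lemmas.'' Your three steps---H\'elein's Coulomb frame under the $8\pi/3$ threshold, the Jacobian rewriting of $-\Delta\lambda = K e^{2\lambda}$, and the Wente/harmonic splitting with the $L^{2,\infty}$ hypothesis handling the harmonic part on $\D_r$---are exactly the standard argument from that reference.
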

This theorem can be adapted to disks of arbitrary radii  without losing control on the constant.
\begin{cor}
\label{controlconformalfactorcor}
Let $\Phi \in \mathcal{E}\left(\D_\rho \right)$, conformal. Let $\n$ be its Gauss map and $\lambda$ its conformal factor.
We assume  $$ \int_{\D_\rho}  \left| \nabla \n \right|^2 < \frac{8 \pi}{3}$$
and 
$$\left\| \nabla \lambda \right\|_{L^{2,\infty} \left( \D_\rho \right)} \le C_0 .$$
Then, for any $r<1$ there exists $c_\rho \in \R$ and $C \in \R$ depending on $r$ and $C_0$ such that  
$$ \left\| \lambda - c_\rho \right\|_{L^\infty \left( \D_{r \rho} \right) } \le C.$$
\end{cor}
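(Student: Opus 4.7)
The proof I would give simply reduces the corollary to Theorem \ref{controlconformalfactor} by a scaling argument, so the only real content is to check that every hypothesis, and the shape of the conclusion, behaves well under dilation by the factor $\rho$ on the disk.

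The plan is to set $\tilde \Phi(z) := \Phi(\rho z)$ for $z \in \D$. This $\tilde \Phi$ is still a conformal weak immersion in $\mathcal{E}(\D)$: its induced metric is the pullback of $\Phi^*\xi$ by a homothety and is therefore conformal with conformal factor
\[ \tilde \lambda(z) = \lambda(\rho z) + \log \rho, \]
its Gauss map is $\tilde \n(z) = \n(\rho z)$, and its bilipschitz character on $\D$ follows from that of $\Phi$ on $\D_\rho$ together with the homothety. Because we are in the conformally critical dimension two, the two quantities appearing in the hypotheses of Theorem \ref{controlconformalfactor} are invariant under dilation; more precisely, a direct change of variables yields
\[ \int_{\D} |\nabla \tilde \n|^2\,dz = \int_{\D_\rho} |\nabla \n|^2\,dw \;<\; \tfrac{8\pi}{3}, \]
and since $\nabla \tilde \lambda(z) = \rho \, (\nabla \lambda)(\rho z)$, the scale-invariance of $L^{2,\infty}$ on $\R^2$ gives
\[ \|\nabla \tilde \lambda\|_{L^{2,\infty}(\D)} = \|\nabla \lambda\|_{L^{2,\infty}(\D_\rho)} \le C_0. \]
Thus $\tilde \Phi$ satisfies, on $\D$, exactly the hypotheses of Theorem \ref{controlconformalfactor} with the same constant $C_0$.

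Applying Theorem \ref{controlconformalfactor} to $\tilde \Phi$, we obtain for any $r<1$ a constant $c \in \R$ and a constant $C=C(r,C_0)$ such that
\[ \|\tilde \lambda - c\|_{L^\infty(\D_r)} \le C. \]
Unrolling the definition $\tilde \lambda(z) = \lambda(\rho z) + \log \rho$ and setting $c_\rho := c - \log \rho$, this is exactly
\[ \|\lambda - c_\rho\|_{L^\infty(\D_{r\rho})} \le C, \]
which is the desired conclusion, with a constant $C$ that depends only on $r$ and $C_0$ and in particular not on $\rho$. There is no genuine obstacle here; the only point requiring care is the observation that in two dimensions the Dirichlet energy of $\n$ and the $L^{2,\infty}$ norm of $\nabla\lambda$ are both scale-invariant, so the smallness threshold $8\pi/3$ and the bound $C_0$ transfer from $\D_\rho$ to $\D$ with no loss, and the additive constant in the conclusion can be reabsorbed into $c_\rho$ without affecting the controlled quantity.
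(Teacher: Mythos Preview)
Your proof is correct and follows essentially the same approach as the paper's own proof: rescale $\Phi$ by $\rho$ to reduce to the unit disk, use the scale-invariance of the Dirichlet energy and the $L^{2,\infty}$ norm to verify the hypotheses of Theorem~\ref{controlconformalfactor}, then undo the rescaling with $c_\rho = c - \log\rho$. The paper's argument is identical up to notation.
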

\begin{proof}
Let $\Phi_\rho = \Phi \left( \rho . \right)$,
 $\n_\rho$ be its Gauss map and  $\lambda_\rho$ its conformal factor. Straightforward computations yield 
\begin{equation}
\label{equationauxiliairerescalinglamndacor}
 \begin{aligned}
e^{\lambda_\rho} &= \rho e^{\lambda} \left( \rho. \right) 
\end{aligned}
\end{equation}
and
\begin{equation}
\label{equationauxiliairerescalinglamndacorbis}
\begin{aligned}
\n_\rho &= \n \left( \rho. \right).
\end{aligned}
\end{equation}
Then $$\int_{\D} \left| \nabla \n_\rho \right|^2dz = \int_{\D_\rho}  \left| \nabla \n \right|^2 dz < \frac{8 \pi}{3}$$ 
and, thanks to (\ref{equationauxiliairerescalinglamndacor}),
$$ \| \nabla \lambda_\rho \|_{L^{2, \infty} \left( \D \right) }  = \| \nabla \left(  \lambda ( \rho . ) + \ln \rho \right) \|_{L^{2, \infty} \left( \D \right) } = \| \nabla \lambda \|_{L^{2, \infty} \left( \D_\rho \right) } \le C_0 $$ owing to the scaling-invariance properties of the $L^2$ and $L^{2, \infty}$ norms.
Applying theorem \ref{controlconformalfactor} one finds there exists $c \in \R$ and $C \in \R$ depending on $r$ and $C_0$ such that 
$$ \left\| \lambda_\rho - c \right\|_{L^\infty \left( \D_{r} \right) } \le C .$$
However, using (\ref{equationauxiliairerescalinglamndacor}),
$$ \left\| \lambda - c_\rho \right\|_{L^\infty \left( \D_{r \rho} \right) } \le C$$
with $c_\rho = c - \ln \rho$ and the same $C$.
\end{proof}

We can extend the control to domains with merely $\int_\D \left| \nabla \n \right|^2 < \infty$ up to adding an additionnal parameter $r_0$ to the constant. As explained in the introduction, $r_0$ measures how uniformly small a ball in the disk has to be to have sufficiently small $\nabla \n$ energy and thus in turn how many of these small balls are needed to cover the domain of study. We recall the definition of $r_0$ before proceeding:
$$r_0 = \frac{1}{\rho} \inf \left\{ s \left| \int_{B_s(p)} \left| \, \nabla \n \right|^2 = \frac{4 \pi}{3}, \: \forall p \in \D_\rho \text{ s.t. } B_s(p) \subset \D_\rho \right. \right\}.$$
\begin{cor}
\label{controlconformalfactorsanspetit}
Let $\Phi \in \mathcal{E}\left(\D_\rho \right) $ conformal,  let $\n$ be its Gauss map and $\lambda$ its conformal factor.
We assume that $$ \left\| \nabla \lambda \right\|_{L^{2,\infty} \left( \D_\rho \right)} + \left\| \nabla \n \right\|_{L^{2} \left( \D_\rho \right)} \le C_0 .$$
Then, for any $r<1$ there exists $c_\rho \in \R$ and $C \in \R$ depending on  $r$, $C_0$ and $r_0$  such that 
$$ \left\| \lambda - c_\rho \right\|_{L^\infty \left( \D_{ r \rho} \right) } \le C. $$
\end{cor}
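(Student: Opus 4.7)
The proof proceeds by covering $\D_{r\rho}$ with finitely many small balls on each of which corollary \ref{controlconformalfactorcor} applies, and then propagating the locally defined constants across overlapping balls.

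\textbf{Step 1 (Local control).} Fix $r<1$ and set $r_1 := \min\bigl(r_0,\tfrac{1-r}{2}\bigr)\rho$, so that for every $p \in \D_{r\rho}$ one has $B_{r_1}(p) \subset \D_\rho$. By the very definition of $r_0$,
$$\int_{B_{r_1}(p)} |\nabla \n|^2 \le \frac{8\pi}{6} < \frac{8\pi}{3},$$
and of course $\|\nabla \lambda\|_{L^{2,\infty}(B_{r_1}(p))} \le \|\nabla \lambda\|_{L^{2,\infty}(\D_\rho)} \le C_0$. Viewing $\Phi_{\vert B_{r_1}(p)}$ as a conformal weak immersion of $\D_{r_1}$ after translation, corollary \ref{controlconformalfactorcor} (applied with shrinkage ratio $\tfrac{1}{2}$) yields a constant $c(p) \in \R$ and a universal $C = C(C_0) > 0$, independent of $p$, such that
$$\|\lambda - c(p)\|_{L^\infty(B_{r_1/2}(p))} \le C.$$

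\textbf{Step 2 (Finite cover).} Extract from the open cover $\{B_{r_1/2}(p)\}_{p\in \D_{r\rho}}$ a finite subcover $\{B_{r_1/2}(p_i)\}_{i=1,\dots,N}$ of $\overline{\D_{r\rho}}$; the covering number $N$ depends only on $r$ and $r_0$ (both through $r_1/\rho$).

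\textbf{Step 3 (Propagation of the constant).} Whenever $B_{r_1/2}(p_i) \cap B_{r_1/2}(p_j) \neq \emptyset$, choosing $x$ in the intersection yields
$$|c(p_i) - c(p_j)| \le |c(p_i)-\lambda(x)| + |\lambda(x)-c(p_j)| \le 2C.$$
Since $\D_{r\rho}$ is connected and covered by finitely many such balls, any two indices $i,j$ can be joined by a chain of overlapping balls in the cover of length at most $N$, so
$$|c(p_i) - c(p_j)| \le 2NC \qquad \text{for every } i,j.$$

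\textbf{Step 4 (Conclusion).} Set $c_\rho := c(p_1)$. Given any $x \in \D_{r\rho}$, there exists $i$ with $x \in B_{r_1/2}(p_i)$, hence
$$|\lambda(x)-c_\rho| \le |\lambda(x)-c(p_i)| + |c(p_i)-c_\rho| \le C + 2NC = (2N+1)C,$$
which gives the desired bound with a constant depending only on $r$, $C_0$ and $r_0$.

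The only technical subtlety is the choice of $r_1$: one must simultaneously ensure that the enlarged balls $B_{r_1}(p_i)$ stay inside $\D_\rho$ (so corollary \ref{controlconformalfactorcor} applies) and that each of them has small enough Gauss-map energy (which is exactly what $r_0$ guarantees). Both conditions are met by the choice $r_1 = \min(r_0,(1-r)/2)\rho$, and everything else is bookkeeping; there is no genuine analytic obstacle.
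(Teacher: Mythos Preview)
Your proof is correct and follows essentially the same approach as the paper: cover $\D_{r\rho}$ by finitely many balls of radius comparable to $r_0\rho$, apply corollary \ref{controlconformalfactorcor} on each, and chain the local constants across overlaps. The paper first rescales to $\D$ and uses Vitali's covering lemma to obtain an explicit bound $N\le 100/r_1^2$, whereas you work directly on $\D_\rho$ and invoke compactness, but these are purely cosmetic differences.
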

\begin{proof}
We prove the result on $\D$, then working as in the proof of corollary \ref{controlconformalfactorcor} we can extend the result to $\D_\rho$.

If  $ \int_\D \left| \nabla \n \right|^2  < \frac{8 \pi}{6}$ then one can simply apply theorem \ref{controlconformalfactor}. 
Else let $r<1$, and $r_1 = \min \left( \frac{1-r}{2}, r_0 \right)$. We cover $\D_r$ with a finite number $N(r_1)$ of open disks $ \left(B_{\frac{r_1}{2}} (p_i) \right)_{i=1 \dots n } $.

One can then apply 
corollary \ref{controlconformalfactorcor} on each $B_{r_1}\left( p_i \right)$ and find $c_i \in \R$ such that 
\begin{equation} \label{controlelambdaci} \left\| \lambda - c_i  \right\|_{L^\infty \left( B_{\frac{r_1}{2}} \left( p_i\right) \right)} \le C . \end{equation}
Here $C$ is a constant depending only on $C_0$. 
 Let $i,j \in I$ such that $ B_{\frac{r_1}{2}} \left( p_i\right) \cap  B_{\frac{r_1}{2}} \left( p_j\right) \neq \emptyset$. Then 
\begin{equation}
\label{deuxccontigus}
\begin{aligned}
\left| c_i - c_j  \right| &\le \left| c_i - \lambda (x) \right| + \left| c_j - \lambda (x) \right| \\
&\le  \left\| \lambda - c_i  \right\|_{L^\infty \left( B_{\frac{r_1}{2}} \left( p_i\right) \right)} +  \left\| \lambda - c_j \right\|_{L^\infty \left( B_{\frac{r_1}{2}} \left( p_j\right) \right)} \\
&\le 2   C.
\end{aligned}
\end{equation}
Taking any $i,j \in I$, let $\gamma_{ij}$ be a straight line linking any fixed $x_i \in  B_{\frac{r_1}{2}} \left( p_i\right)$ to any fixed $x_j \in  B_{\frac{r_1}{2}} \left( p_j\right)$. $\gamma_{ij}$ goes through the disks $ \left( B_{\frac{r_1}{2}} \left( p_{q_l}\right) \right)_{q_l \in J \subset I}$, ordered such that  $$ B_{\frac{r_1}{2}} \left( p_{q_l}\right) \cap  B_{\frac{r_1}{2}} \left( p_{q_{l+1}}\right) \neq \emptyset.$$
Then, thanks to  (\ref{deuxccontigus}), $$ \begin{aligned}
\left| c_i - c_j  \right| &\le \sum_{l} \left| c_{q_l} - c_{q_{l+1}} \right| \\
&\le  \sum_{l}  2   C \\
&\le 2N  C ,
\end{aligned}
$$
since $ \gamma_{ij}$ goes through at most $N $ disks.

Setting $c = c_1$, one deduces 
\begin{equation}
\label{controledesc}
\left| c - c_i \right| \le  2N  C  \quad \forall i \in I.
\end{equation}
Then given any $x \in \D_r$ we find a $i \in I$ such that $x \in B_{\frac{r_1}{2}} ( p_i)$ and have, using (\ref{controlelambdaci}) and (\ref{controledesc}),
$$\left| \lambda(x) - c \right| \le \left| \lambda(x) - c_i\right| + \left| c - c_i \right| \le \left( 2N+1 \right)  C.  $$
Taking the supremum over $x$ we conclude with 
$$ \left\| \lambda - c \right\|_{L^\infty \left( \D_r \right) } \le  \left( 2N+1 \right) C$$ which is as announced given that  $N$ depends only on $r$ and $r_0$.
\end{proof}

This Harnack inequality ensures that (\ref{equationwillmorefaible}) has a distributional meaning in conformal maps. Indeed, if we consider  $\Phi \in \mathcal{E}\left( \D \right) $ satisfying hypothesis (\ref{hypothesecontroleconformalfacteur2infini}), $\nabla \n \in L^2(\D)$ and its respective tracefull and tracefree part $H\nabla \Phi$ and $\Ar \nabla \Phi$ are properly defined as $L^2(\D)$ functions (see  (\ref{HnablaphietArnablaphienfonctionden}) for details). Thus, corollary \ref{controlconformalfactorsanspetit}  ensures that,  for any $r<1$,  there exists $\Lambda \in \R$ such that on $\D_r$ we have 
\begin{equation}
\label{inequalityharnacl}
\frac{e^\Lambda}{C} \le e^\lambda \le C e^\Lambda.
\end{equation}
Hence, since $\left| H \right| = e^{-\lambda} \left|H \nabla \Phi \right|$ we have on $\D_r$ 
\begin{equation} \label{hhasameaninginaconformaldisk} \begin{aligned} \| H \|_{L^2 \left( \D_r \right) } &\le e^{-\Lambda} C  \| H \nabla \Phi \|_{L^2 \left( \D \right) } \\ &\le e^{-\Lambda}  C  \| \nabla \n \|_{L^2 \left( \D \right)}< + \infty. \end{aligned}\end{equation}
As a result, (\ref{equationwillmorefaible}) is well-defined in the distributional sense, which will allow us to introduce  divergence free quantities for the Willmore equations.

\subsection{Divergence free vector fields for the Willmore immersions}
\label{subsectionWillmorequantities}
As said in the introduction, T. Rivière has defined auxiliary quantities (theorem I.4 in \cite{bibanalysisaspects}) playing a crucial part in the regularity of Willmore surfaces. We recall their definition before any further exploitation.

\begin{de}
Let $\Phi \in \mathcal{E} \left( \D \right)$ be a weak  Willmore immersion.
Then, there exists $\Lr \in \mathcal{D}' \left( \D \right)$ 
such that 
 \begin{equation} \label{definitionl} \nabla^\perp \Lr = \nabla \vec{H} -3 \pi_{\n} \left( \nabla \vec{H} \right) + \nabla^\perp \n \times \vec{H}.\end{equation}
In the following, we will call $\Lr$ the first Willmore quantity.
\end{de}
\begin{prop} 
Let $\Phi \in \mathcal{E} \left( \D \right) $ be a weak  Willmore immersion.
Then, for any $\Lr \in \mathcal{D}' \left( \D \right)$ satisfying  (\ref{definitionl}) we have
$$\begin{aligned}
&\mathrm{div} \left( \langle \Lr, \nabla^\perp \Phi \rangle \right) = 0 \\
& \mathrm{div} \left( \Lr \times \nabla^\perp \Phi + 2H \nabla^\perp \Phi \right) = 0.
\end{aligned}$$
Thus, there exists $S$ and $\vec{R}  \in \mathcal{D}' \left( \D \right)$ such that
 \begin{equation} \label{definitionSR} \begin{aligned}  &\nabla^\perp S = \langle \Lr, \nabla^\perp \Phi \rangle \\
& \nabla^\perp \vec{R} = \Lr \times \nabla^\perp \Phi + 2H \nabla^\perp \Phi . \end{aligned}\end{equation}
In the following, we will call $S$ and $\vec{R}$ the second and third Willmore quantity.
\end{prop}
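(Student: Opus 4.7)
The plan is to verify both divergence-free identities by direct computation in the local conformal chart provided by the framework, then invoke the Poincar\'e lemma on the simply connected disk $\D$ to produce the distributional potentials $S$ and $\vec{R}$.

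For the first identity, since $\operatorname{div}(\nabla^\perp f) = 0$, the Leibniz rule yields $\operatorname{div}\langle \Lr, \nabla^\perp \Phi\rangle = \langle \partial_y \Lr, \partial_x \Phi\rangle - \langle \partial_x \Lr, \partial_y \Phi\rangle$. Substituting (\ref{definitionl}) and writing $\vec{H} = H\n$ so that $\pi_{\n}(\partial_i \vec{H}) = (\partial_i H)\n$, one gets $\nabla \vec{H} - 3\pi_{\n}(\nabla \vec{H}) = H \nabla \n - 2(\nabla H)\n$. The $(\nabla H)\n$ contributions drop against $\partial_i \Phi$ because $\langle \n, \partial_i \Phi\rangle = 0$; the inner products $\partial_i \n \cdot \partial_j \Phi = -A(\partial_i, \partial_j)$ yield entries of the second fundamental form; and the cross-product term is handled by $\langle a \times b, c\rangle = \langle a, b \times c\rangle$ together with $\n \times \partial_y \Phi = -\partial_x \Phi$ and $\n \times \partial_x \Phi = \partial_y \Phi$, reducing it again to entries of $A$. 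The surviving combination is $H(A(\partial_y,\partial_y) - A(\partial_x,\partial_x)) + H(A(\partial_x,\partial_x) - A(\partial_y,\partial_y)) = 0$, giving the first divergence-free identity.

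For the second identity the strategy is identical, with the cross product replacing the inner product. The normal part $-2(\nabla H)\n$ of $\nabla \vec{H} - 3\pi_{\n}(\nabla \vec{H})$ produces, after crossing with $\nabla \Phi$, a contribution proportional to $\nabla H \cdot \nabla^\perp \Phi$ that cancels exactly against $\operatorname{div}(2H\nabla^\perp \Phi) = 2\nabla H \cdot \nabla^\perp \Phi$; this is precisely the purpose of the $2H\nabla^\perp \Phi$ correction in the definition of $\vec{R}$. The triple products $(\partial_i \n \times \n) \times \partial_j \Phi$ simplify via $(a \times b)\times c = (a\cdot c) b - (b\cdot c) a$ and $\n \perp \partial_j \Phi$ to normal multiples of $A(\partial_i, \partial_j)$, which cancel pairwise. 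The final leftover $H(\partial_x \n \times \partial_x \Phi + \partial_y \n \times \partial_y \Phi)$ vanishes upon expanding $\partial_i \n$ in the conformal tangent basis $\{\partial_x \Phi, \partial_y \Phi\}$: each summand is a normal multiple of $A(\partial_x, \partial_y)$ with opposite signs, once more thanks to the symmetry of $A$. Hence the second divergence also vanishes.

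With both divergences vanishing as distributions on $\D$, the Poincar\'e lemma on the simply connected disk supplies the desired potentials $S, \vec{R} \in \mathcal{D}'(\D)$ satisfying (\ref{definitionSR}). The main subtlety, rather than the algebra, is the distributional regularity: $\Lr$ lies a priori only in $\mathcal{D}'(\D)$, but $\nabla \Phi \in L^\infty$ by the bilipschitz hypothesis and $H \in L^2_{\mathrm{loc}}$ by (\ref{hhasameaninginaconformaldisk}), so the products $\langle \Lr, \nabla^\perp \Phi\rangle$, $\Lr \times \nabla^\perp \Phi$ and $2H \nabla^\perp \Phi$ are genuine distributions whose divergences are well-defined and the Leibniz manipulations above are legitimate, making the Poincar\'e lemma applicable.
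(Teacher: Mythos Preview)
The paper does not give its own proof of this proposition; it is recalled from Rivi\`ere's work (theorem I.4 in \cite{bibanalysisaspects}) without argument. Your direct verification is the standard one and the algebra is correct: the symmetry of $A$ kills the tangential contributions in both identities, the triple products $(\partial_i\n\times\n)\times\partial_j\Phi$ collapse to normal multiples of $A_{ij}$ that cancel pairwise, and the $2H\nabla^\perp\Phi$ term is there precisely to absorb the $-2(\nabla H)\n$ part, as you say. Invoking the Poincar\'e lemma on the simply connected $\D$ is then the right closing step.

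The one soft spot is your regularity paragraph. Saying ``$\nabla\Phi\in L^\infty$, so the products with $\Lr\in\mathcal{D}'$ are genuine distributions'' is not quite enough: a general distribution cannot be multiplied by a merely $L^\infty$ function, so the Leibniz manipulations need $\Lr$ to be an honest locally integrable function first. What actually makes this work is that the right-hand side of (\ref{definitionl}) lies in $H^{-1}+L^1_{\mathrm{loc}}$ (this is the computation (\ref{eqauxsurleL}) together with (\ref{hhasameaninginaconformaldisk})), which via theorem \ref{theoremepourbornerL} forces $\Lr\in L^{2,\infty}_{\mathrm{loc}}$; once that is in hand the products with $\nabla\Phi\in L^\infty$ are in $L^{2,\infty}_{\mathrm{loc}}$ and your computation is fully justified. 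You gesture at the issue but do not close this loop, so as written the argument has a small gap at the distributional level.
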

We remark that $\Lr$, $\vec{R}$ and $S$ are defined up to a constant that we can (and will) adjust. 

The key role played by $S$ and $\vec{R}$ revolves around the system of equations they satisfy (as stated by theorem 7.5 and corollary 7.6 of \cite{bibpcmi}).
\begin{theo}
\label{theoremequationRS}
Let $\Phi \in \mathcal{E} \left( \D \right) $ be a weak  Willmore immersion. Then $S$ and $\vec{R}$ satisfy
\label{lesystemeenRS}
\begin{equation}
\label{RetSennvectoriel}
\begin{aligned}
\nabla S &= - \left\langle \n, \nabla^\perp \vec{R} \right\rangle \\
\nabla \vec{R} &= \n \times \nabla^\perp \vec{R} + \nabla^\perp S \n,
\end{aligned}
\end{equation}
and hence
\begin{equation}
\label{systemenRSPhiannex}
\left\{
\begin{aligned}
\Delta S &= - \left\langle \nabla \n, \nabla^\perp \vec{R} \right\rangle \\
\Delta \vec{R} &= \nabla \n \times \nabla^\perp \vec{R} + \nabla^\perp S \nabla \n\\
\Delta \Phi &= \frac{1}{2} \left( \nabla^\perp S. \nabla \Phi + \nabla^\perp \vec{R} \times \nabla \Phi \right).
\end{aligned}
\right.
\end{equation}
\end{theo}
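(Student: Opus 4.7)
The plan is to prove (\ref{RetSennvectoriel}) by pointwise algebraic manipulation of the curl identities (\ref{definitionSR}), then deduce (\ref{systemenRSPhiannex}) by taking the divergence. The Harnack inequality (\ref{hhasameaninginaconformaldisk}) ensures $H \in L^2_{\mathrm{loc}}$, hence that $\Lr$, $\nabla S$ and $\nabla \vec{R}$ are locally integrable, so every step is legitimate almost everywhere.

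First I would extract from (\ref{definitionSR}) the unperped versions by reading off components: $\nabla S = \langle \Lr, \nabla \Phi\rangle$ and $\nabla \vec{R} = \Lr \times \nabla \Phi + 2H\nabla\Phi$. The key geometric input is that in a conformal parametrization $\{e^{-\lambda}\Phi_x, e^{-\lambda}\Phi_y, \vec{n}\}$ is a direct orthonormal frame of $\R^3$, which gives $\vec{n}\times\Phi_x = \Phi_y$, $\vec{n}\times\Phi_y = -\Phi_x$, and equivalently $\vec{n}\times\nabla^\perp\Phi = \nabla \Phi$. Then the double cross product identity together with $\vec{n}\perp\nabla^\perp\Phi$ gives $\vec{n}\times(\Lr\times\nabla^\perp\Phi) = -\langle\vec{n},\Lr\rangle\nabla^\perp\Phi$, while the scalar triple product gives $\langle\vec{n},\Lr\times\nabla^\perp\Phi\rangle = -\langle\Lr,\nabla\Phi\rangle$.

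These two identities yield (\ref{RetSennvectoriel}) after elementary combination. For the $S$-equation, $-\langle\vec{n},\nabla^\perp\vec{R}\rangle = -\langle\vec{n},\Lr\times\nabla^\perp\Phi\rangle - 2H\langle\vec{n},\nabla^\perp\Phi\rangle = \langle\Lr,\nabla\Phi\rangle = \nabla S$. For the $\vec{R}$-equation, expanding $\vec{n}\times\nabla^\perp\vec{R} + \nabla^\perp S\,\vec{n} = -\langle\vec{n},\Lr\rangle\nabla^\perp\Phi + 2H\nabla\Phi + \langle\Lr,\nabla^\perp\Phi\rangle\vec{n}$ and comparing with $\nabla\vec{R}= \Lr\times\nabla\Phi + 2H\nabla\Phi$ reduces to the pointwise identity $\Lr\times\nabla\Phi = -\langle\vec{n},\Lr\rangle\nabla^\perp\Phi + \langle\Lr,\nabla^\perp\Phi\rangle\vec{n}$, which I would verify by decomposing $\Lr$ in the orthonormal moving frame $\{e^{-\lambda}\Phi_x, e^{-\lambda}\Phi_y, \vec{n}\}$ and using the three cross-product formulas above.

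To derive (\ref{systemenRSPhiannex}) I would take the divergence of (\ref{RetSennvectoriel}). Since $\operatorname{div}\nabla^\perp \equiv 0$, the Leibniz rule applied to $\vec{n}\times\nabla^\perp\vec{R}$, $\nabla^\perp S\,\vec{n}$ and $\langle\vec{n},\nabla^\perp\vec{R}\rangle$ kills all terms involving second derivatives of $\vec{R}$ or $S$, yielding $\Delta S = -\langle\nabla\vec{n},\nabla^\perp\vec{R}\rangle$ and $\Delta\vec{R} = \nabla\vec{n}\times\nabla^\perp\vec{R} + \nabla^\perp S\,\nabla\vec{n}$. For $\Delta\Phi$, I would substitute $S_i = \langle\Lr,\Phi_i\rangle$ and $\vec{R}_i = \Lr\times\Phi_i + 2H\Phi_i$ directly into $\nabla^\perp S\cdot\nabla\Phi + \nabla^\perp\vec{R}\times\nabla\Phi$; applying $(\Lr\times\Phi_i)\times\Phi_j = \langle\Lr,\Phi_j\rangle\Phi_i$ (the $(\Phi_i\cdot\Phi_j)\Lr$ piece vanishes for $i\neq j$ by conformality), the $\Lr$-proportional contributions cancel in pairs and one is left with $4H\,\Phi_x\times\Phi_y = 4He^{2\lambda}\vec{n} = 2\Delta\Phi$, the last equality being the standard conformal formula $\Delta\Phi = 2e^{2\lambda}\vec{H}$. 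I expect the only real difficulty to be bookkeeping the sign and orientation conventions, particularly in the tangent-normal decomposition of $\Lr\times\nabla\Phi$; there is no genuine analytic content beyond the frame identities.
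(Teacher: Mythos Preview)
Your proof is correct. Note, however, that the paper does not provide its own proof of this theorem: it is stated as theorem 7.5 and corollary 7.6 of \cite{bibpcmi} and cited without argument. Your direct algebraic derivation from the frame identities $\vec{n}\times\nabla^\perp\Phi=\nabla\Phi$, the double cross product, and the conformal formula $\Delta\Phi=2e^{2\lambda}\vec{H}$ is exactly the standard route and all steps check out (including the tangent--normal decomposition of $\Lr\times\nabla\Phi$ and the cancellation of the $\Lr$-terms in the $\Delta\Phi$ equation).
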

This system can be slightly changed to better suit our needs.
\begin{theo}
\label{theoRSPhi}
Let $\Phi \in \mathcal{E} \left( \D \right) $ be a weak  Willmore immersion.  Then, $S$ and $\vec{R}$ satisfy
\begin{equation}
\label{systemenRSPhiannexmieux}
\left\{
\begin{aligned}
\Delta S &=  \left\langle H \nabla \Phi, \nabla^\perp \vec{R} \right\rangle \\
\Delta \vec{R} &= - H \nabla \Phi \times \nabla^\perp \vec{R} - \nabla^\perp S H \nabla \Phi\\
\Delta \Phi &= \frac{1}{2} \left( \nabla^\perp S. \nabla \Phi + \nabla^\perp \vec{R} \times \nabla \Phi \right).
\end{aligned}
\right.
\end{equation}
\end{theo}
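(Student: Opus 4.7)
The plan is to reduce the claim to the two purely algebraic pointwise identities
\begin{equation*}
\langle \nabla\n + H\nabla\Phi,\, \nabla^\perp \vec R \rangle = 0,
\qquad
(\nabla\n + H\nabla\Phi) \times \nabla^\perp \vec R + \nabla^\perp S\,(\nabla\n + H\nabla\Phi) = 0.
\end{equation*}
Indeed the third equation of (\ref{systemenRSPhiannexmieux}) coincides with the third of (\ref{systemenRSPhiannex}), so it is already provided by Theorem \ref{theoremequationRS}; and adding the first identity above to $\Delta S = -\langle \nabla\n, \nabla^\perp \vec R\rangle$, and the second to $\Delta \vec R = \nabla\n \times \nabla^\perp \vec R + \nabla^\perp S\,\nabla \n$, converts the system (\ref{systemenRSPhiannex}) into (\ref{systemenRSPhiannexmieux}).

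Both identities are checked by a pointwise computation in the conformal parametrization, starting from the Weingarten relation $\partial_i \n = -W^k_i\,\partial_k \Phi$ with $W^k_i = e^{-2\lambda} h_{ki}$ the shape operator. Combined with $H = \tfrac{1}{2}(W^1_1 + W^2_2)$ this gives
\begin{equation*}
\partial_i \n + H\,\partial_i \Phi = -(W^k_i - H\delta^k_i)\,\partial_k \Phi,
\end{equation*}
so that $\nabla\n + H\nabla\Phi$ is purely tangential and governed by the tracefree part of the shape operator, which in a conformal chart is simultaneously symmetric ($W^1_2 = W^2_1$) and tracefree ($W^1_1 + W^2_2 = 2H$). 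I would then substitute the defining formulas $\nabla^\perp \vec R = \Lr \times \nabla^\perp \Phi + 2H\nabla^\perp \Phi$ and, for the second identity, $\partial_j S = \Lr \cdot \partial_j \Phi$ coming from $\nabla^\perp S = \langle \Lr, \nabla^\perp \Phi\rangle$, and expand everything with the triple-product identity $a\cdot(b\times c) = c\cdot(a\times b)$ and the double-cross-product formula $a\times(b\times c) = (a\cdot c)b - (a\cdot b)c$, together with the conformal relations $|\partial_x\Phi|^2 = |\partial_y\Phi|^2 = e^{2\lambda}$, $\partial_x\Phi\cdot\partial_y\Phi = 0$, $\partial_x\Phi\times\partial_y\Phi = e^{2\lambda}\n$.

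The cancellation pattern is very clean. In the scalar identity, the $2H\nabla^\perp\Phi$ contribution collapses to a multiple of $W^1_2 - W^2_1$, which vanishes by symmetry, while the $\Lr\times\nabla^\perp\Phi$ contribution collapses to $-(W^1_1 + W^2_2 - 2H)\,e^{2\lambda}\,\Lr\cdot\n$, which vanishes by the defining identity for $H$. The vectorial identity has the same structure: after opening the double cross product the $2H$ term is again proportional to $W^1_2 - W^2_1$ and vanishes by symmetry, and the leftover $\Lr$-dependent terms cancel exactly against $\nabla^\perp S\cdot(\nabla\n + H\nabla\Phi)$ precisely because $\partial_j S = \Lr\cdot\partial_j\Phi$. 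I do not expect any conceptual obstacle; the only delicate point is the bookkeeping, namely the careful tracking of signs coming from the interplay of $\nabla^\perp$ with the $\R^3$ cross product and the clean separation of the two cancellation mechanisms (symmetry and tracefreeness of the shape operator). The substantive takeaway is the pointwise identification $\nabla\n + H\nabla\Phi = -\Ar\cdot\nabla\Phi$, which is precisely what will allow, in the next section, to replace a smallness assumption on $\nabla\n$ by one on $H\nabla\Phi$ in the $\varepsilon$-regularity.
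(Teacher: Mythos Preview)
Your proposal is correct and follows essentially the same approach as the paper. The paper's proof (in Appendix A.2) exploits exactly the decomposition $\nabla\n = -H\nabla\Phi - \Ar\nabla\Phi$ that you single out, and verifies the two pointwise identities $\langle \Ar\nabla\Phi,\nabla^\perp\vec R\rangle = 0$ and $\Ar\nabla\Phi\times\nabla^\perp\vec R = -\nabla^\perp S\,\Ar\nabla\Phi$ by the same direct expansion in conformal coordinates; your phrasing in terms of the symmetric tracefree part of the shape operator $W^k_i - H\delta^k_i$ is just a notational variant of the same computation.
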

\begin{proof}
Computations are done in the appendix (see section A.2).
\end{proof}

\subsection{Control of $\Lr e^\lambda$ on a disk}
This section is devoted to the following result, which is only a slight improvement over theorem 7.4 of \cite{bibpcmi}, with a control by $H \nabla \Phi$ replacing one by $\nabla \n$. However, we will  follow \emph{mutatis mutandis} the previous proof.
\begin{theo}
\label{epsilonreularitylintro}
Let $\Phi \in \mathcal{E}\left(\D_\rho \right)$ be a conformal weak Willmore immersion.  Let  $\n$ denote its Gauss map, $H$ its mean curvature and $\lambda$ its conformal factor.
We assume $$ \left\| \nabla \lambda \right\|_{L^{2,\infty} \left( \D_\rho \right)}+\| \nabla \n \|_{L^2 \left( \D_\rho \right)} \le C_0.$$
Then for any=
$r< 1$ there exists a constant $\vec{\mathcal{L}} \in \R^3$ and a constant $ C \in \R$ depending on $ r$, $C_0$  and $r_0$ (defined in (\ref{ler0}))=
such that 
$$ \left\| e^{\lambda} \left( \Lr - \vec{\mathcal{L}}  \right) \right\|_{L^{2, \infty} \left( \D_{r\rho} \right) } \le C \left\| H \nabla \Phi \right\|_{L^2 \left( \D_\rho \right) },$$
where $\Lr$ is given by (\ref{LRSintro}).
\end{theo}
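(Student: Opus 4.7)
The plan is to follow, \emph{mutatis mutandis}, the proof of theorem 7.4 of \cite{bibpcmi}, using the new Jacobian system (\ref{systemenRSPhiannexmieux}) from theorem \ref{theoRSPhi} in place of the original system (\ref{systemenRSPhiannex}) of theorem \ref{theoremequationRS}. The essential point is that, in (\ref{systemenRSPhiannexmieux}), each factor $\nabla \n$ appearing on the right-hand sides of $\Delta S$ and $\Delta \vec{R}$ is replaced by a factor $H\nabla \Phi$; tracking this substitution through the Lorentz-space Wente-type estimates of \cite{bibpcmi} is what turns the bound by $\|\nabla \n\|_{L^2}$ into a bound by $\|H\nabla \Phi\|_{L^2}$. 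A rescaling $\Phi_\rho(z) = \Phi(\rho z)$, exactly as in the proof of corollary \ref{controlconformalfactorcor}, reduces the statement to the case $\rho = 1$, since both sides of the desired inequality are invariant under this scaling in dimension two.

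Following the cover construction of corollary \ref{controlconformalfactorsanspetit}, I apply Vitali to cover $\D_r$ by a finite family of sub-disks $B_{r_1/2}(p_i)$, $i = 1, \dots, N$, with $r_1 \simeq r_0$, each small enough that $\int |\nabla \n|^2 < 8\pi/3$; Vitali gives $N \lesssim r_0^{-2}$. On each sub-disk, corollary \ref{controlconformalfactorcor} yields a Harnack bound $e^\lambda \simeq e^{\Lambda_i}$, while the auxiliary Willmore potentials $S$ and $\vec{R}$ satisfy (\ref{systemenRSPhiannexmieux}). Refined Wente-type estimates in Lorentz spaces, exploiting the duality $L^{2,\infty} = (L^{2,1})^*$ and the algebraic cancellation $\langle \nabla \vec{R}, \nabla^\perp \vec{R}\rangle = 0$ built into the Jacobian structure, together with the small local $\|\nabla \n\|_{L^2}$ used to absorb the cross-terms arising from $2H\nabla \Phi = \nabla \vec{R} - \Lr \times \nabla \Phi$, yield $\|\nabla S\|_{L^{2,\infty}} + \|\nabla \vec{R}\|_{L^{2,\infty}} \le C \|H \nabla \Phi\|_{L^2(\D)}$ on each sub-disk. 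Inverting (\ref{definitionSR}) through $\nabla S = \langle \Lr, \nabla \Phi\rangle$ and $\nabla \vec{R} = \Lr \times \nabla \Phi + 2 H \nabla \Phi$, and dividing by $e^\lambda$ via the Harnack control, then produces $\|e^\lambda(\Lr - \vec{\mathcal{L}}_i)\|_{L^{2,\infty}(B_{r_1/2}(p_i))} \le C\|H\nabla \Phi\|_{L^2(\D)}$ for a local constant $\vec{\mathcal{L}}_i$ coming from the ambiguity in the definition of $\Lr$.

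Finally, the local constants $\vec{\mathcal{L}}_i$ are patched into a single global $\vec{\mathcal{L}}$ by a chain-overlap argument along intersecting sub-disks, identical to the one used in corollary \ref{controlconformalfactorsanspetit}; this contributes the dependence on $N \lesssim r_0^{-2}$. The main obstacle is establishing the refined Wente estimate for (\ref{systemenRSPhiannexmieux}): the factor $H\nabla \Phi$ lies only in $L^2$ and is not a priori a gradient, so the classical Coifman--Lions--Meyer--Semmes Jacobian trick is not directly available; one must instead combine the algebraic structure of the system with the sharp Lorentz-space endpoint of elliptic regularity and with the smallness of $\|\nabla \n\|_{L^2}$ on each sub-disk in order to close the bootstrap. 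It is precisely at this step that the argument genuinely departs from \cite{bibpcmi}.
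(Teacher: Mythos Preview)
Your approach has a genuine gap, and in fact the paper's argument is both different and much simpler than what you propose.

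The paper does \emph{not} use the system (\ref{systemenRSPhiannexmieux}) for $S$ and $\vec{R}$ at all in proving this theorem; that system is invoked only later, in the proof of theorem \ref{epsilonregularitehnablaphifaibleLL2inftyi}, \emph{after} the present bound on $\Lr$ is already available. Here the paper works directly with the defining relation (\ref{definitionl}), rewritten as
\[
\nabla^\perp \Lr = -2\nabla\vec H + 3H\nabla\n + \nabla^\perp\n\times\vec H .
\]
The covering argument of corollary \ref{controlconformalfactorsanspetit} (which absorbs the $r_0$-dependence once and for all) gives a global Harnack $e^\lambda \simeq e^\Lambda$ on $\D_{(r+1)/2}$; hence $\|H\|_{L^2} \le C e^{-\Lambda}\|H\nabla\Phi\|_{L^2}$. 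Then $\nabla\vec H$ is bounded in $H^{-1}$ by $\|\vec H\|_{L^2}$, while $H\nabla\n$ and $\nabla^\perp\n\times\vec H$ are bounded in $L^1$ by $\|\nabla\n\|_{L^2}\|H\|_{L^2}\le C_0\,Ce^{-\Lambda}\|H\nabla\Phi\|_{L^2}$. So $\nabla\Lr\in H^{-1}+L^1$ with total norm $\lesssim e^{-\Lambda}\|H\nabla\Phi\|_{L^2}$, and a low-regularity lemma (theorem \ref{theoremepourbornerL}: if $\nabla V\in H^{-1}+L^1$ then $V-\vec{\mathcal L}\in L^{2,\infty}$) produces $\|\Lr-\vec{\mathcal L}\|_{L^{2,\infty}(\D_r)}\le Ce^{-\Lambda}\|H\nabla\Phi\|_{L^2}$. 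Multiplying by $e^\lambda\le Ce^\Lambda$ finishes. No patching of local constants $\vec{\mathcal L}_i$ is needed: the single covering argument lives inside the Harnack estimate only.

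Your route through $S$, $\vec R$ is circular. To estimate $\nabla S$, $\nabla\vec R$ from the elliptic system you need a priori control on the right-hand sides, which already contain $\nabla\vec R$, $\nabla S$ (or equivalently $\Lr$). The absorption you sketch does not close: once you substitute $2H\nabla\Phi=\nabla\vec R-\Lr\times\nabla\Phi$, the term $\langle\nabla\vec R,\nabla^\perp\vec R\rangle$ indeed vanishes, but the surviving term is $\langle\Lr\times\nabla\Phi,\nabla^\perp\vec R\rangle$, which is exactly $\Lr$ times quantities of size $e^\lambda$ --- the unknown you want to bound, with no small prefactor to absorb. Moreover, this theorem assumes no smallness of $\|H\nabla\Phi\|_{L^2}$, so an absorption in that quantity is not available either. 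Finally, as you yourself note, $H\nabla\Phi$ is not a gradient, so Wente-type estimates do not apply to (\ref{systemenRSPhiannexmieux}) directly; the paper never needs them at this stage.
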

\begin{proof}
As before we will  prove the theorem on $\D$. The proof on $\D_\rho$ follows as in corollary \ref{controlconformalfactorcor}.
Let $\Phi \in \mathcal{E}\left(\D \right)$ be a conformal weak Willmore immersion, $\n$ its Gauss map, $H$ its mean curvature and $\lambda$ its conformal factor.
We assume that $$ \left\| \nabla \lambda \right\|_{L^{2,\infty} \left( \D \right)} + \left\| \nabla \n \right\|_{L^{2} \left( \D \right)} \le C_0.$$ Let $r<1$ and $\Lr \in \mathcal{D}' \left( \D \right)$ satisfying (\ref{definitionl}).=
\newline
\underline{ \bf{ Step 1: }} {\bf Control of the conformal factor  } \newline
Applying corollary \ref{controlconformalfactorsanspetit}
 we find $\Lambda \in \R$ and $C$ depending on  $r$, $C_0$ and $r_0$ such that 
$$ \left\| \lambda - \Lambda \right\|_{L^\infty \left( \D_{\frac{r+1}{2}} \right) } \le C.$$
Consequently, $\lambda$ satisfies (\ref{inequalityharnacl}),
$$
\forall x \in \D_{\frac{r+ 1}{2}} \quad \frac{e^\Lambda}{C}  \le e^{\lambda(x)} \le C e^\Lambda.
$$
\newline
\underline{ \bf{ Step 2: }} {\bf Control on $\nabla \Lr$  } \newline
Estimate (\ref{hhasameaninginaconformaldisk}) then stands:   $$\begin{aligned}
 \| H \|_{L^2 \left( \D_{\frac{r+1}{2}} \right)} &\le C e^{-\Lambda}  \| H \nabla \Phi \|_{L^2 \left( \D_{\frac{r+1}{2}} \right)}. \end{aligned}$$
 
We can exploit it to control the right-hand side of  (\ref{definitionl}). First, using the fact that the tangent part of $\nabla \vec{H}$, $ \pi_{T} \left( \nabla \vec{H} \right)$, satisfies $ \pi_{T} \left( \nabla \vec{H} \right) = H \nabla \n$,  we recast  (\ref{definitionl}) as 
\begin{equation}\label{eqauxsurleL}\begin{aligned} \nabla^\perp \Lr &= \nabla \vec{H}-3 \pi_{\n} \left( \nabla \vec{H} \right) + \nabla^\perp \n \times \vec{H} \\ &= \nabla \vec{H}-3\nabla \vec{H}  + 3 \pi_{T} \left( \nabla \vec{H} \right)+ \nabla^\perp \n \times \vec{H} \\
&= - 2 \nabla \vec{H} +3 H \nabla \n  +  \nabla^\perp \n \times \vec{H}. \end{aligned}\end{equation}
Then we control each term of the right-hand side as follows.
Moreover
$$\begin{aligned}
 \| \nabla^\perp \n \times \vec{H} \|_{L^1\left(\D_{\frac{r+1}{2}} \right)} &\le \left\| \nabla \n \right\|_{L^2 \left( \D_{\frac{r+1}{2}} \right)}\|  \vec{H} \|_{L^{2} \left( \D_{\frac{r+1}{2}} \right)} \\&\le    C  e^{-\Lambda} \left\| \nabla \n \right\|_{L^2 \left( \D \right)}  \| H \nabla \Phi  \|_{L^2 \left( \D \right)},
\end{aligned} $$
while
$$ \begin{aligned}
 \| H  \nabla \n  \|_{L^1\left(\D_{\frac{r+1}{2}} \right)} &\le  \left\| \nabla \n \right\|_{L^2 \left( \D_{\frac{r+1}{2}} \right)}\|  \vec{H} \|_{L^{2} \left( \D_{\frac{r+1}{2}} \right)}  \\ &\le  C e^{-\Lambda}  \left\| \nabla \n \right\|_{L^2 \left( \D \right)}   \| H \nabla \Phi  \|_{L^2 \left( \D \right)}.
\end{aligned}$$

  The last three estimates combined give
$$\nabla \Lr \in \nabla^\perp L^2 \left(\D_{\frac{r+1}{2}} \right) \bigoplus L^1 \left( \D_{\frac{r+1}{2}} \right).$$ 

\vspace{5mm}

\underline{ \bf{ Step 3: }} {\bf Conclusion  }

Thanks to Step 2 and  theorem \ref{theoremepourbornerL} (see appendix)

$$ \exists \vec{\mathcal{L}} \in \R^3 \quad  \left\| \Lr - \vec{\mathcal{L}}  \right\|_{L^{2,\infty} \left( \D_r \right)} \le C  e^{-\Lambda}  \| H \nabla \Phi  \|_{L^2 \left( \D \right)}$$
with $C$ a real constant that depends on $r$, $C_0$ and $r_0$. Hence 

$$\begin{aligned} \left\| \left( \Lr -\vec{\mathcal{L}}  \right)e^\lambda \right\|_{L^{2,\infty} \left( \D_r \right) } &\le  e^{\Lambda}  \left\|  \Lr - \vec{\mathcal{L}}   \right\|_{L^{2,\infty}\left( \D_r\right)} \\
&\le C \| H \nabla \Phi  \|_{L^2 \left( \D \right)}, \end{aligned}$$
with $C$ as desired. This concludes the proof on $\D$.
\end{proof}

\begin{remark}
Theorem \ref{theoremepourbornerL} yields, in fact, a $L^2$ inequality. Since $L^2 \subset L^{(2, \infty)}$ the desired (and weaker) control follows. We chose to present a $L^{2, \infty}$ inequality because it is sufficient to recover the regularity (see theorem \ref{epsilonregularitehnablaphifaibleLL2inftyi}), and because  in the bubbling case, which is the objective of this paper, we will start with $L^{2,\infty}$ controls (due to the nature of the neck estimates, see section \ref{lasection3} below). All results will then be presented with this generic  starting estimate on $ \Lr$.
\end{remark}

\subsection{$L^{2,1}$ controls in the generic case}
Without small controls on $H$ or $\n$,  some results can be achieved in term of Lorentz spaces estimates as shown by the following.
\begin{theo}
\label{n21controlintro}
Let $\Phi \in \mathcal{E}\left(\D_\rho \right) $ satisfy the hypotheses of theorem \ref{epsilonreularitylintro}.
Then for any $r< 1$ there exists a constant $ C \in \R$  depending on  $r$, $C_0$ and  $r_0$ (defined in (\ref{ler0})) such that 
$$\|H \nabla \Phi \|_{L^{2,1} \left( \D_{r\rho} \right) } \le C   \| H \nabla \Phi  \|_{L^2 \left( \D_\rho \right)},$$ and
$$\left\| \nabla \n \right\|_{L ^{2,1} \left( \D_{r \rho} \right) } \le C \left\| \nabla \n \right\|_{L ^{2} \left( \D_\rho \right) }.$$
\end{theo}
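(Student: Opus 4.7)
The plan is to bootstrap the $L^{2,\infty}$ control on $\Lr e^\lambda$ supplied by theorem \ref{epsilonreularitylintro} to $L^{2,1}$ controls on $H\nabla\Phi$ and $\nabla\n$, by feeding it into the Jacobian system (\ref{systemenRSPhiannexmieux}) and invoking Wente-type integration by compensation in Lorentz spaces. After rescaling we may assume $\rho=1$. Theorem \ref{epsilonreularitylintro} provides a vector $\vec{\mathcal{L}}\in\R^3$ and, for any $r<r'<1$,
\[
\|e^\lambda(\Lr-\vec{\mathcal{L}})\|_{L^{2,\infty}(\D_{r'})}\le C\|H\nabla\Phi\|_{L^2(\D)},
\]
while corollary \ref{controlconformalfactorsanspetit} furnishes $\Lambda\in\R$ with $e^\lambda\asymp e^\Lambda$ on $\D_{r'}$ (with constants depending on $r_0$). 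In particular $|\nabla\Phi|=\sqrt{2}\,e^\lambda$ is pointwise bounded, and the defining relations (\ref{definitionSR}) translate the preceding estimate into
\[
\|\nabla S\|_{L^{2,\infty}(\D_{r'})}+\|\nabla\vec R\|_{L^{2,\infty}(\D_{r'})}\le C\|H\nabla\Phi\|_{L^2(\D)},
\]
modulo inessential additive constants on $S$ and $\vec R$.

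I would then feed these estimates into the third line of (\ref{systemenRSPhiannexmieux}),
\[
\Delta\Phi=\tfrac{1}{2}\bigl(\nabla^\perp S\cdot\nabla\Phi+\nabla^\perp\vec R\times\nabla\Phi\bigr),
\]
which is, componentwise, a sum of Jacobian determinants in $(S,\Phi)$ and $(\vec R,\Phi)$. Since $\nabla\Phi\in L^\infty\subset L^{2,1}_{\mathrm{loc}}$ while $\nabla S,\nabla\vec R\in L^{2,\infty}$, the Lorentz refinement of the Coifman--Lions--Meyer--Semmes Jacobian estimate (built on the duality $L^{2,1}$--$L^{2,\infty}$) places the right-hand side in the local Hardy space $\mathcal H^1$. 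Standard Calder\'on--Zygmund theory then yields $\nabla^2\Phi\in\mathcal H^1$, and the two-dimensional critical Sobolev--Lorentz embedding $\mathcal H^1\hookrightarrow L^{2,1}$ delivers $\nabla^2\Phi\in L^{2,1}(\D_r)$. Via the conformal identity $|\Delta\Phi|=\sqrt{2}\,e^\lambda|H\nabla\Phi|$ and the Harnack bound $e^\lambda\asymp e^\Lambda$, this is the desired
\[
\|H\nabla\Phi\|_{L^{2,1}(\D_r)}\le C\|H\nabla\Phi\|_{L^2(\D)}.
\]

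For the control of $\nabla\n$ I would rely on the pointwise identity $|\nabla\n|^2=2(e^\lambda H)^2+(e^\lambda|\Ar|)^2$ arising from the splitting $A=Hg+\Ar$: the tracefull piece is $|H\nabla\Phi|/\sqrt{2}$, already controlled in $L^{2,1}$ by the preceding step. The tracefree piece $e^\lambda|\Ar|$ is handled by iterating: now that $H\nabla\Phi\in L^{2,1}$, the $L^{2,1}$--$L^{2,\infty}$ duality applied to the first two lines of (\ref{systemenRSPhiannexmieux}) upgrades $\Delta S,\Delta\vec R$ to $\mathcal H^1$, whence Calder\'on--Zygmund pushes $\nabla S,\nabla\vec R$ into $L^{2,1}$, and unwinding (\ref{definitionl}) together with the Codazzi identity relating $\nabla\vec H$ to $\Ar$ closes the loop and yields $\nabla\n\in L^{2,1}(\D_r)$ with norm controlled by $C\|\nabla\n\|_{L^2(\D)}$. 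The main obstacle will be making the Lorentz Wente/CLMS step rigorous with one factor only in $L^{2,\infty}$ rather than in $L^2$: the classical $W^{1,2}\times W^{1,2}\to\mathcal H^1$ Jacobian theorem does not apply directly, so one must invoke its nontrivial Lorentz refinement, and the dependence of all constants on $r_0$ has to be tracked carefully through the small-energy covering inherited from corollary \ref{controlconformalfactorsanspetit}.
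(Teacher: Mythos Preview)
Your proposal has a genuine gap at the central step. The claimed embedding $\mathcal H^1\hookrightarrow L^{2,1}$ is simply false in dimension two (Hardy space $\mathcal H^1$ sits inside $L^1$, not inside any $L^p$ with $p>1$), so from $\Delta\Phi\in\mathcal H^1$ you cannot conclude $\Delta\Phi\in L^{2,1}$, and hence you do not reach $H\nabla\Phi\in L^{2,1}$ via the identity $|\Delta\Phi|=\sqrt{2}\,e^\lambda|H\nabla\Phi|$. You yourself flag the related difficulty: the Jacobian/CLMS step with one factor only in $L^{2,\infty}$ is not covered by the classical $W^{1,2}\times W^{1,2}\to\mathcal H^1$ theorem, and the ``nontrivial Lorentz refinement'' you would need is precisely what is missing.

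The paper circumvents this entirely by a \emph{two-step} bootstrap on $\nabla S,\nabla\vec R$ using only the standard Wente lemmas. Starting from $\nabla S,\nabla\vec R\in L^{2,\infty}$ (as you obtain), one works on the system in its $\nabla\n$ form, $\Delta S=-\langle\nabla\n,\nabla^\perp\vec R\rangle$, decomposes into a zero-boundary piece plus a harmonic piece, and applies the $L^{2,\infty}\times L^2$ Wente (theorem \ref{wentel2infini}, with $\nabla\n\in L^2$) to upgrade $\nabla S,\nabla\vec R$ to $L^2$. \emph{Then} one repeats with both factors in $L^2$ and invokes the $L^2\times L^2\to L^{2,1}$ Wente (theorem \ref{wentel21}) to reach $\nabla S,\nabla\vec R\in L^{2,1}$. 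The control on $H\nabla\Phi$ then drops out of the pointwise inequality $|H\nabla\Phi|\le\tfrac12|\nabla\vec R|$, which follows immediately from $\nabla^\perp\vec R=\Lr\times\nabla^\perp\Phi+2H\nabla^\perp\Phi$ and orthogonality; no detour through $\Delta\Phi$ or Hardy spaces is needed.

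For $\nabla\n$, your sketch via Codazzi is too vague. The paper instead uses the Gauss-map equation $\Delta\n+\nabla\n\times\nabla^\perp\n+2\,\mathrm{div}(H\nabla\Phi)=0$: absorb the divergence term by a Hodge potential $\alpha$ with $\Delta\alpha=\mathrm{div}(H\nabla\Phi)$ and $\nabla\alpha\in L^{2,1}$ (since $H\nabla\Phi\in L^{2,1}$), set $\nu=\n-2\alpha$, and then $\Delta\nu=-\nabla\n\times\nabla^\perp\n$ is a pure Jacobian with both factors in $L^2$, so the $L^{2,1}$ Wente applies directly to give $\nabla\nu\in L^{2,1}$, hence $\nabla\n\in L^{2,1}$.
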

We first prove a more flexible result than theorem \ref{n21controlintro} (in that it does not reference $r_0$) controlling the $L^{2,1}$ norm of $\nabla \n$ under $L^{2,\infty}$ assumptions on $\Lr$.
\begin{theo}
\label{n21controlsousLL2inftyintro}
Let $\Phi \in \mathcal{E}\left(\D_\rho \right)$ be a conformal weak Willmore immersion, $\n$ its Gauss map, $H$ its mean curvature, $\lambda$ its conformal factor and $\Lr$ its first Willmore quantity.
We assume  $$ \left\| \nabla \lambda \right\|_{L^{2,\infty} \left( \D_\rho \right)}+\| \nabla \n \|_{L^2 \left( \D_\rho \right)} \le C_0 ,$$
 and that there exists $r'<1$ and  and $C_1>0$ such that $$ \left\| \Lr e^{\lambda} \right\|_{L^{2,\infty} \left( \D_{ r' \rho} \right)} \le C_1 \left\| H \nabla \Phi \right\|_{L^2 \left( \D_\rho \right) }.$$
Then, for any $r< r'$ there exists a constant $ C$ depending on $ r$, $r'$, $C_0$ and $C_1$  such that 
$$\|H \nabla \Phi \|_{L^{2,1} \left( \D_{r \rho} \right) } \le C   \| H \nabla \Phi  \|_{L^2 \left( \D_\rho \right)},$$ and
$$\left\| \nabla \n \right\|_{L ^{2,1} \left( \D_{r\rho} \right) } \le C \left\| \nabla \n \right\|_{L ^{2} \left( \D_\rho \right) }.$$
Furthermore, the associated second and third Willmore quantities satisfy 
$$\| \nabla S \|_{L^{2,1} \left( \D_{r \rho} \right) } + \| \nabla \vec{R} \|_{L^{2,1} \left( \D_{r\rho} \right) }\le C   \| H \nabla \Phi  \|_{L^2 \left( \D_\rho \right)}.$$
\end{theo}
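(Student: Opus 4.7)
My plan is a three-stage bootstrap: first obtain $L^{2,\infty}$ control of $\nabla S, \nabla \vec{R}$ from the hypothesis on $\Lr e^\lambda$, then upgrade this to $L^{2,1}$ via a Lorentz-space Wente estimate applied to the Jacobian form of the system of theorem \ref{theoremequationRS}, and finally deduce the claimed $L^{2,1}$ bounds on $H\nabla\Phi$ and $\nabla\n$ from the third line of (\ref{systemenRSPhiannex}). First I would rescale $\Phi \mapsto \Phi(\rho\cdot)$ as in corollary \ref{controlconformalfactorcor} to reduce to $\rho=1$. Then, fixing $r<r''<r'$, corollary \ref{controlconformalfactorsanspetit} furnishes $\Lambda\in\R$ so that $\frac{e^\Lambda}{C}\le e^\lambda\le Ce^\Lambda$ on $\D_{r''}$; in particular $|\nabla\Phi|=\sqrt{2}\,e^\lambda$ lies in $L^\infty(\D_{r''})$ with Harnack constants depending only on $r,r',C_0,C_1$.

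From the defining identities (\ref{definitionSR}) the pointwise bounds $|\nabla S|\le\sqrt{2}\,|\Lr e^\lambda|$ and $|\nabla\vec{R}|\le\sqrt{2}\,|\Lr e^\lambda|+2\,|H\nabla\Phi|$, combined with the hypothesis $\|\Lr e^\lambda\|_{L^{2,\infty}(\D_{r'})}\le C_1\|H\nabla\Phi\|_{L^2(\D)}$ and the embedding $L^2\hookrightarrow L^{2,\infty}$, immediately produce
\[
\|\nabla S\|_{L^{2,\infty}(\D_{r'})}+\|\nabla \vec{R}\|_{L^{2,\infty}(\D_{r'})}\le C\,\|H\nabla\Phi\|_{L^2(\D)}.
\]
To upgrade to $L^{2,1}$ I would invoke the Jacobian form given by theorem \ref{theoremequationRS},
\[
\Delta S=-\langle\nabla\n,\nabla^\perp \vec{R}\rangle,\qquad \Delta \vec{R}=\nabla\n\times\nabla^\perp \vec{R}+\nabla^\perp S\,\nabla\n.
\]
Each right-hand side is a sum of Jacobian-type products $\nabla a\cdot\nabla^\perp b$ (or cross-product variants) with $\nabla a=\nabla\n\in L^2(\D)$ and $\nabla b\in\{\nabla S,\nabla\vec{R}\}\subset L^{2,\infty}(\D_{r'})$. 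Applying the Lorentz-space refinement of the Wente--H\'elein lemma (exploiting the Hardy-space regularity of Jacobians through the Coifman--Lions--Meyer--Semmes theorem, paired with the duality $(L^{2,1})^*=L^{2,\infty}$) yields on the intermediate disk
\[
\|\nabla S\|_{L^{2,1}(\D_{r''})}+\|\nabla \vec{R}\|_{L^{2,1}(\D_{r''})}\le C\,\|H\nabla\Phi\|_{L^2(\D)}.
\]

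With these estimates in hand I would feed them into the last line of (\ref{systemenRSPhiannex}), $\Delta\Phi=\tfrac12(\nabla^\perp S\cdot\nabla\Phi+\nabla^\perp\vec{R}\times\nabla\Phi)$. Because $\nabla\Phi\in L^\infty(\D_{r''})$ by Harnack and $L^\infty\cdot L^{2,1}\subset L^{2,1}$, one obtains $\|\Delta\Phi\|_{L^{2,1}(\D_{r''})}\le C\|H\nabla\Phi\|_{L^2(\D)}$. Since in conformal coordinates $\Delta\Phi=2\vec{H}e^{2\lambda}$ and $|H\nabla\Phi|=\sqrt{2}\,|\vec{H}|e^\lambda$, Harnack transfers this bound into the announced $\|H\nabla\Phi\|_{L^{2,1}(\D_r)}\le C\|H\nabla\Phi\|_{L^2(\D)}$. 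For the gradient of $\n$, Calder\'on--Zygmund in Lorentz spaces applied to $\Delta\Phi$ on $\D_{r''}$ gives $\nabla^2\Phi\in L^{2,1}(\D_r)$; using $|\nabla\n|=e^{-\lambda}|A|\le e^{-\lambda}|\nabla^2\Phi|$ and Harnack one obtains an $L^{2,1}$ bound on $\nabla\n$. The Gauss-equation comparison $|H\nabla\Phi|^2=2H^2 e^{2\lambda}\le e^{4\lambda}|\nabla\n|^2$ (from $|A|^2\ge 2H^2$) combined again with Harnack finally lets one rewrite the bound on $\|\nabla\n\|_{L^{2,1}(\D_r)}$ in terms of $\|\nabla\n\|_{L^2(\D)}$ rather than $\|H\nabla\Phi\|_{L^2(\D)}$.

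The principal difficulty is the Lorentz-space Wente estimate of the second step: the classical Wente--H\'elein result only handles Jacobians of two $L^2$ gradients, whereas here one factor lies merely in $L^{2,\infty}$---which is the best class extractable from the hypothesis on $\Lr e^\lambda$. Pushing through the bootstrap therefore requires a Lorentz-improved CLMS-type commutator estimate (or, equivalently, a paraproduct/duality argument built on the natural pairing between $L^{2,1}$ and $L^{2,\infty}$). A secondary delicate point, distinguishing this proof from the classical $\varepsilon$-regularity arguments, is that no smallness assumption on the input norms is available: the hypothesis on $\Lr e^\lambda$ is a linear bound, so the constants must be tracked explicitly to retain the linear dependence on $\|H\nabla\Phi\|_{L^2(\D)}$ rather than relying on an absorption argument.
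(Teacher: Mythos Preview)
Your proposal has a genuine gap that undermines the statement you are trying to prove. The constants in theorem \ref{n21controlsousLL2inftyintro} are required to depend only on $r,r',C_0,C_1$ and \emph{not} on $r_0$; indeed the entire purpose of stating the result under the hypothesis $\|\Lr e^\lambda\|_{L^{2,\infty}}\le C_1\|H\nabla\Phi\|_{L^2}$ is to decouple the $L^{2,1}$ estimates from the $r_0$-dependent Harnack control of corollary \ref{controlconformalfactorsanspetit}. Yet your argument invokes that corollary at the outset (to get $e^\lambda\in L^\infty_{\mathrm{loc}}$) and then uses Harnack again to pass from $\Delta\Phi$ to $H\nabla\Phi$ and from $\nabla^2\Phi$ to $\nabla\n$. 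Each of these steps reintroduces $r_0$ into the constants, which is fatal for the downstream application to degenerating necks in section \ref{lasection4}, where $r_0\to 0$.

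The paper avoids Harnack entirely in this proof. For $\nabla S,\nabla\vec R$ it runs a \emph{two-step} bootstrap $L^{2,\infty}\to L^2\to L^{2,1}$: first the Wente inequality with one factor in $L^{2,\infty}$ (theorem \ref{wentel2infini}) upgrades $\nabla S,\nabla\vec R$ from $L^{2,\infty}$ to $L^2$ on a smaller disk, and then the classical $L^2\times L^2\to L^{2,1}$ Wente (theorem \ref{wentel21}) finishes. This sidesteps the nonstandard $L^2\times L^{2,\infty}\to L^{2,1}$ estimate you flag as the principal difficulty. For $H\nabla\Phi$ the paper uses the purely algebraic pointwise bound $|H\nabla\Phi|\le\tfrac12|\nabla\vec R|$ (inequality (\ref{majorerhnablaphiparnablaR})), which requires no control on $e^\lambda$. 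For $\nabla\n$ it uses the divergence-form equation $\Delta\n+\nabla\n\times\nabla^\perp\n+2\,\mathrm{div}(H\nabla\Phi)=0$ (equation (\ref{equationDeltanannexarticlemieux!})) together with a Hodge decomposition and another application of the $L^{2,1}$ Wente lemma, again with no appeal to Harnack. Your route through $\Delta\Phi$ and $\nabla^2\Phi$ is not scale-invariant and, besides the $r_0$ issue, the lower-order terms in the interior Calder\'on--Zygmund estimate for $\nabla^2\Phi$ would contribute a quantity of order $e^\Lambda$ rather than $\|\nabla\n\|_{L^2}$.
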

\begin{proof}
As before, it is enough to work on the unit disk and conclude with a dilation to obtain the result on disks of arbitrari radii.\newline
\noindent \underline{ \bf{ Step 1: }} {\bf $L^{2,1}$ control of $\nabla S$ and $\nabla \vec{R}$  }\newline
Let $r'<1$ and $\Lr$  such that $$ \left\| \Lr e^{\lambda} \right\|_{L^{2,\infty} \left( \D_{r'} \right)} \le C_1 \left\| H \nabla \Phi \right\|_{L^2 \left( \D \right) }.$$
Then  $S$ and $\vec{R}$ defined as $$ \begin{aligned} \nabla^\perp S &= \langle \Lr, \nabla \Phi \rangle \\
\nabla^\perp \vec{R} &= \vec{L} \times \nabla^\perp \Phi +2H \nabla^\perp \Phi,  \end{aligned}$$
 satisfy: 
\begin{equation} 
\label{estimeersl2infini}
\begin{aligned}
 \|\nabla S \|_{L^{2,\infty} \left( \D_{r'} \right)} +\|\nabla \vec{R} \|_{L^{2,\infty} \left( \D_{r'} \right)}  &\le   \left\| \Lr e^{\lambda} \right\|_{L^{2,\infty} \left( \D_{r'} \right)}  + \left\| H \nabla \Phi \right\|_{L^2 \left( \D_{r'} \right) } \\ &\le \left(C_1+1 \right) \left\| H \nabla \Phi \right\|_{L^2 \left( \D \right) }.
\end{aligned}
\end{equation}
Noticing that $S$ and $\vec{R}$ are defined up to an additive constant, we can choose $S$ and $\vec{R}$ to be of null average value on $\D_{r'}$. 

The classic Poincaré–Wirtinger's inequality (see theorem 2, section 5.8.1 in \cite{evalawrpde}) yields for any $1<p<\infty$ and any $u$ such that $\nabla u \in L^p \left( \D_{r'} \right)$:
$$\left\| u - \bar{u} \right\|_{L^p \left( \D_{r'} \right) } \le C_{p ,r'}\left\| \nabla u \right\|_{L^p (\D_{r'})}$$
 with $C_{p,r'} \in \R_+$ and $\bar{u}$ the mean value of $u$ on $\D_{r'}$. 
 These inequalities can be extended using Marcinkiewitz interpolation theorem (see for example theorem 3.3.3 of  \cite{bibharmmaps}) to $L^{2, \infty}$: there exists $C_{r'}$ such that for any $u$ with $\nabla u \in L^{2,\infty} \left( \D \right)$
$$ \| u - \bar{u} \|_{L^{2,\infty} \left(\D_{r'} \right)} \le C_{r'} \| \nabla u \|_{L^{2,\infty} ( \D_{r'} )}.$$
Applied to $S$ and $\vec{R}$ (which are of null mean value), this yields:
$$ \| S \|_{W^{1, \left(2,\infty\right)} \left( \D_{r'} \right)} +\| \vec{R} \|_{ W^{1, \left(2,\infty \right)} \left( \D_{r'} \right)}  \le   C  \| H \nabla \Phi  \|_{L^2 \left( \D \right)},$$
where $C$ depends on $r'$. Since, thanks to (\ref{systemenRSPhiannex})
$$
\begin{aligned}
\Delta S &= \langle \nabla \vec{R} , \nabla^\perp \n \rangle, \\
\end{aligned}
$$
one can decompose $S = \sigma + s$ where  $s$ is harmonic and $\sigma$ is a solution of 
$$\left\{ \begin{aligned} &\Delta \sigma = \nabla \vec{R}. \nabla^\perp \n \text{ in }   \D_{r'} \\
&\sigma = 0 \text{ on } \partial  \D_{r'}. \end{aligned} \right.$$
 Using Wente's lemma (theorem \ref{wentel2infini}, in appendix) one finds:
\begin{equation} 
\label{equationensigma}
\begin{aligned}
\left\| \nabla \sigma \right\|_{L^2 \left(  \D_{r'} \right) } &\le C \| \nabla \vec{R} \|_{L^{2, \infty} \left(  \D_{r'} \right)} \| \nabla \n \|_{L^2 \left(  \D_{r'} \right) } \\
&\le  C \| H \nabla \Phi  \|_{L^2 \left( \D \right)},
\end{aligned}
\end{equation}
where $C$ depends on $C_0$ and $C_1$.
Meanwhile, Poisson's formula yields for $s$: 
\begin{equation} \label{equationensintermediaireb} \left\| \nabla s \right\|_{L^2 \left( \D_{\frac{r+r'}{2}} \right) } \le C \| S \|_{L^1 \left( \partial \D_{r'}  \right) }\end{equation}
where $C$ depends on $r$, and $r'$.

Using Marcinkiewitz interpolation theorem on trace operators yields
\begin{equation} \label{estimerSaubord} \| S \|_{L^1 \left( \partial \D_{r'}  \right) }\le C \left\| \nabla S \right\|_{L^{2,\infty} \left( \D_{r'}  \right)}\end{equation}
with $C$ depending on $r'$.
Combining  (\ref{estimeersl2infini}), (\ref{equationensintermediaireb}) and (\ref{estimerSaubord}) yields: 
\begin{equation}
\label{estimeeensl2}
\left\| \nabla s \right\|_{L^2 \left( \D_{\frac{r+r'}{2}} \right) } \le  C  \| H \nabla \Phi  \|_{L^2 \left( \D \right)},
\end{equation}
where $C$ depends on $r$, $r'$, $C_1$ and $C_0$.
Together (\ref{equationensigma}) and (\ref{estimeeensl2}) yield: 
$$\| \nabla S \|_{L^2  \left( \D_{\frac{r+r'}{2}} \right) } \le  C  \| H \nabla \Phi  \|_{L^2 \left( \D \right)}.$$
Working similarly on $ \vec{R}$ one finds 
\begin{equation}
\label{labonneestimeel2setr}
\| \nabla S \|_{L^2  \left( \D_{\frac{r+r'}{2}} \right) } + \| \nabla \vec{R} \|_{L^2  \left( \D_{\frac{r+r'}{2}} \right) }  \le   C \| H \nabla \Phi  \|_{L^2 \left( \D \right)}.
\end{equation}
This estimate can still be improved: let $S = \sigma' + s'$ with $s'$ harmonic and $\sigma'$
$$\left\{ \begin{aligned}& \Delta \sigma' = \nabla \vec{R}. \nabla^\perp \n \text{ in }   \D_{\frac{r+r'}{2}}\ \\
&\sigma' = 0 \text{ on } \partial  \D_{\frac{r+r'}{2}}.\end{aligned} \right.$$
 Using theorem \ref{wentel21} (in appendix) and (\ref{labonneestimeel2setr}) ensures
\begin{equation} 
\label{equationensigmaprime}
\begin{aligned}
\left\| \nabla \sigma' \right\|_{L^{2,1} \left(  \D_{\frac{r+r'}{2}} \right) } &\le C \| \nabla \vec{R} \|_{L^{2} \left(  \D_{\frac{r+r'}{2}} \right)} \| \nabla \n \|_{L^2 \left(  \D_{\frac{r+r'}{2}} \right) } \\
&\le  C  \| H \nabla \Phi  \|_{L^2 \left( \D \right)}.
\end{aligned}
\end{equation}
Using Poisson's formula allows one to control $s'$:
\begin{equation} \label{equationensintermediairebprime} \left\| \nabla s' \right\|_{L^{2,1} \left( \D_{\frac{3r+r'}{4}} \right) } \le C \| S \|_{L^1 \left( \partial \D_{\frac{r+r'}{2}}  \right) }.\end{equation}
As before, Marcinkiewitz interpolation on trace theorems yields
\begin{equation}
\label{estimeeensl2prime}
\left\| \nabla s' \right\|_{L^{2,1} \left( \D_{\frac{3r+r'}{4}} \right) } \le C  \| H \nabla \Phi  \|_{L^2 \left( \D \right)}.
\end{equation}
Together (\ref{equationensigmaprime}) and (\ref{estimeeensl2prime}) ensure
$$\| \nabla S \|_{L^{2,1}  \left( \D_{\frac{3r+r'}{4}} \right) } \le  C \| H \nabla \Phi  \|_{L^2 \left( \D \right)}.$$ 

Working analogously on $\vec{R}$ one finds 
\begin{equation}
\label{labonneestimeel21setr}
\| \nabla S \|_{L^{2,1}  \left( \D_{\frac{3r+r'}{4}} \right) } + \| \nabla \vec{R} \|_{L^{2,1}  \left( \D_{\frac{3r+r'}{4}} \right) }  \le  C \| H \nabla \Phi  \|_{L^2 \left( \D \right)}.
\end{equation}
Once more, $C$ depends on $r$, $r'$, $C_0$ and $C_1$ which concludes Step 1.
\newline
\underline{ \bf{ Step 2: }} {\bf $L^{2,1}$ control of $H \nabla \Phi$  } \newline
We simply use inequality (\ref{majorerhnablaphiparnablaR}), proven in appendix:
$$ \left| H \nabla \Phi \right| \le \frac{1}{2} \left|   \nabla \vec{R} \right|.$$ Combining it with (\ref{labonneestimeel21setr}) we find 
\begin{equation}
\label{labonneestimeel21hanablaphi}
\| H \nabla \Phi \|_{L^{2,1}  \left( \D_{\frac{3r+r'}{4}} \right) }  \le C  \| H \nabla \Phi  \|_{L^2 \left( \D \right)},
\end{equation}
which gives us the desired control on $H \nabla \Phi$.
\newline
\underline{ \bf{ Step 3: }} {\bf $L^{2,1}$ control of $\nabla \n$  } \newline
To expand these estimates to $\nabla \n$ we will use  equation (\ref{equationDeltanannexarticlemieux!}) (see appendix)
$$
\Delta \n + \nabla \n \times \nabla^\perp \n + 2 \mathrm{div} \left( H \nabla \Phi \right) = 0.
$$
Using corollary \ref{hodgedecompositionmoinsbon21} and (\ref{labonneestimeel21hanablaphi})  there exists  $\alpha \in W^{1,(2,1)}\left( \D_{\frac{3r+r'}{4}} \right)$ such that
\begin{equation}
\label{lealpha}
 \Delta \alpha= \mathrm{div} \left(H\nabla \Phi \right) 
\end{equation}
and
\begin{equation}
\label{lecontrolealpha}
\begin{aligned}
\left\| \alpha \right\|_{W^{1,(2,1)} \left( \D_{\frac{3r+r'}{4}} \right) }   &\le \| H \nabla \Phi \|_{L^{2,1}  \left( \D_{\frac{3r+r'}{4}} \right) } \\  &\le C  \| H \nabla \Phi  \|_{L^2 \left( \D \right)}.
\end{aligned}
\end{equation}
 Setting $\nu = \n -2 \alpha$ and using  (\ref{lecontrolealpha}) yields 
\begin{equation}
\label{lecontroledunuintermediaire}
\begin{aligned}
\left\| \nabla \nu \right\|_{L^2  \left( \D_{\frac{3r+r'}{4}} \right) } &\le \left\| \nabla \left( \n -2 \alpha \right) \right\|_{L^2  \left( \D_{\frac{3r+r'}{4}} \right) }   \\&\le \left\| \nabla  \n \right\|_{L^2  \left( \D_{\frac{3r+r'}{4}} \right) }   +2 \left\| \nabla  \alpha \right\|_{L^2  \left( \D_{\frac{3r+r'}{4}} \right) }   \\
&\le \left\| \nabla  \n \right\|_{L^2  \left( \D\right) }   +2 C\left\| \nabla  \alpha \right\|_{L^{2,1}   \left( \D_{\frac{3r+r'}{4}} \right) }\\
&\le  \left\| \nabla  \n \right\|_{L^2  \left( \D\right) }  +  C  \| H \nabla \Phi  \|_{L^2 \left( \D \right)}\\
&\le C \left\| \nabla  \n \right\|_{L^2  \left( \D\right) }.
\end{aligned}
\end{equation}

Besides, $\nu $ satisfies $$\Delta \nu  + \nabla \n \times \nabla^\perp \n=0.$$
We split $\nu = \nu_1 + \nu_2$ with $\nu_2$ harmonic and  $\nu_1$ solution of 
$$ \left\{ \begin{aligned}
&\Delta \nu_1  + \nabla \n \times \nabla^\perp \n = 0  \text{ in }  \D_{\frac{3r+r'}{4}} \\
&\nu_1 = 0 \text{ on } \partial  \D_{\frac{3r+r'}{4}}.
\end{aligned} \right.$$
Using theorem \ref{wentel21} we bound 
\begin{equation}
\label{onestimenu1}
\| \nabla \nu_1 \|_{L^{2,1} \left( \D_{\frac{3r+r'}{4}} \right)} \le C \| \nabla \n \|^2_{L^{2} \left( \D_{\frac{3r+r'}{4}} \right)}.
\end{equation}

Using the same method as for the estimates on $s'$ (see (\ref{equationensintermediairebprime}) - (\ref{estimeeensl2prime})) and applying (\ref{lecontroledunuintermediaire})  we find 
\begin{equation}
\label{onestimenu2}
\| \nabla \nu_2 \|_{L^{2,1} \left( \D_{r} \right)} \le C \|\nabla \nu \|_{L^{2} \left( \D_{\frac{3r+r'}{4}}. \right)} \le C \left\| \nabla  \n \right\|_{L^2  \left( \D\right) } .
\end{equation}
Combining (\ref{onestimenu1}) and (\ref{onestimenu2}) yields 
\begin{equation}
\label{leboncotrole21dunu}
\| \nabla \nu \|_{L^{2,1} \left( \D_r \right) } \le C \left\| \nabla \n \right\|_{L ^{2} \left( \D \right) }.
\end{equation}
Since $\n = \nu +2 \alpha$, (\ref{lecontrolealpha}) and  (\ref{leboncotrole21dunu}) ensure
$$\begin{aligned}
\| \nabla \n \|_{L^{2,1} \left( \D_r \right) } &\le \| \nabla \nu \|_{L^{2,1} \left( \D_r \right) } +2 \| \nabla \alpha  \|_{L^{2,1} \left( \D_r \right) } \\
&\le C \left\| \nabla \n \right\|_{L ^{2} \left( \D \right) },
\end{aligned}$$
which concludes the proof.
\end{proof}

Theorem \ref{n21controlintro} follows from combining theorems \ref{epsilonreularitylintro} and  \ref{n21controlsousLL2inftyintro}.

\section{$\varepsilon$-regularity results for weak Willmore immersions: proof of theorem \ref{epsilonregularitehnablaphifaibleintro}}
\label{lasection2}
We will first prove a more adaptable result:
\begin{theo}
\label{epsilonregularitehnablaphifaibleLL2inftyi}
Let $\Phi \in \mathcal{E}\left(\D \right)$ satisfy the hypotheses of theorem \ref{epsilonreularitylintro}.
We assume there exists $r'<1$ and $C_1>0$ such that $$ \left\| \Lr e^{\lambda} \right\|_{L^{2,\infty} \left( \D_{r'} \right)} \le C_1 \left\| H \nabla \Phi \right\|_{L^2 \left( \D \right) }$$
where $\Lr$ is given by (\ref{LRSintro}).
Then, there exists $\varepsilon_0'$  depending only on $C_0$ such that if $$  \left\| H \nabla \Phi \right\|  \le \varepsilon_0'$$ then for any $r< r'$ there exists a constant  $ C \in \R$  depending on  $r$, $C_0$, $p$ and $C_1$ such that
$$\left\| H \nabla \Phi \right\|_{L^\infty \left( \D_{ r  } \right)} \le   C  { \left\| H \nabla \Phi \right\|_{L^2 \left( \D \right)}},$$ 
and 
$$\left\| \nabla \Phi \right\|_{W^{2,p} \left( \D_{ r  } \right)} \le      C  \| \nabla \Phi \|_{L^2 \left( \D \right) } $$
for all $p<\infty$.
\end{theo}

\begin{proof}
Let $r<r'<1$, we follow the outline given in the introduction.
\newline
\underline{ \bf{ Step 1: }} {\bf $W^{1,(2,1)}$ control on the Willmore quantities  } \newline
Let $\Lr$  satisfy our hypothesis.
Theorem \ref{n21controlsousLL2inftyintro} gives:
\begin{equation}
\label{controlerspourlemmeepsilonreghfaible}
\left\| \nabla S  \right\|_{L^{2,1} \left( \D_{\frac{r+r'}{2}} \right) } + \left\| \nabla \vec{R}  \right\|_{L^{2,1} \left( \D_{\frac{r+r'}{2}} \right) } \le C_1{ \left\| H \nabla \Phi \right\|_{L^2 \left( \D \right)}}.
\end{equation}\newline
\underline{ \bf{ Step 2: }} {\bf $W^{1,q}$ control on the Willmore quantities, for $q>2$ }\newline
Thanks to (\ref{systemenRSPhiannex}) and (\ref{systemenRSPhiannexmieux}) we can decompose in any $B_t(p)$, with $p \in  \D_{\frac{r + r'}{2}}$ and $t$ sufficiently small, $S = \sigma + s $ and $ \vec{R} = \vec{\rho} + \vec{r}$, with
\begin{equation}
  \left\{ \label{lesigma} \begin{aligned}
\Delta \sigma &= \Delta S = \langle H \nabla \Phi , \nabla^\perp \vec{R} \rangle = - \langle \nabla \n , \nabla^\perp \vec{R} \rangle \text{ in }  B_t(p)\\
\sigma &= 0 \text{ on } \partial B_t (p),
\end{aligned}
\right.
\end{equation}
\begin{equation}
 \left\{\label{lerho} \begin{aligned}
\Delta \vec{\rho} &=\Delta \vec{R} = - H \nabla \Phi \times \nabla^\perp \vec{R} - \nabla^\perp S H\nabla \Phi  \\&=  \nabla\n \times \nabla^\perp \vec{R} + \nabla^\perp S \nabla \n  \text{ in }  B_t(p)\\
\vec{\rho} &= 0 \text{ on } \partial B_t (p)
\end{aligned}
\right.
\end{equation}
\begin{equation}
\left\{ \begin{aligned}
\Delta s &=0 \text{ in }   B_t (p)\\
s &= S \text{ on } \partial  B_t (p),
\end{aligned}
\right.
\end{equation}
\begin{equation}
 \left\{ \begin{aligned}
\Delta \vec{r} &=0 \text{ in }   B_t (p)\\
\vec{r} &= \vec{R} \text{ on } \partial  B_t (p).
\end{aligned}
\right.
\end{equation}

Since $s$ and $\vec{r}$ are harmonic functions, $ l \rightarrow \frac{1}{l^2} \int_{B_l(p)} \left| \nabla s \right|^2$ and  $ l \rightarrow \frac{1}{l^2} \int_{B_l(p)} \left| \nabla \vec{r} \right|^2$ are classically non-decreasing  (see lemma IV.1 in \cite{bibconservationlawsforconformallyinvariantproblems}). It follows that
\begin{equation}
\label{controlel2partieharmonique}
\begin{aligned}
 \|\nabla s \|^2_{L^2\left(  B_{\frac{t}{2}} (p) \right)}  &\le \frac{1}{4}   \|\nabla s \|^2_{L^2 \left( B_{t} (p) \right)}, \\
  \|\nabla \vec{r} \|^2_{L^2 \left( B_{\frac{t}{2}} (p) \right)}  &\le \frac{1}{4}   \|\nabla \vec{r}\|^2_{L^2 \left( B_{t} (p) \right) }.
\end{aligned}
\end{equation}
Furthermore, thanks to (\ref{lesigma})
and theorem \ref{wentel21} we have
\begin{equation}
\label{controlel21surlapartiepleines}
\| \nabla \sigma \|_{L^{2,1} \left( B_t(p) \right)} \le  C \| \nabla \vec{R} \|_{L^2 \left( B_t(p) \right) }\| \nabla \n \|_{L^2 \left( B_t(p) \right) }.
\end{equation} 
 Thanks to (\ref{lesigma})
and theorem \ref{calderonzygmundl1l2infini} we find 
\begin{equation}
\label{controlesigmal2infini}
\begin{aligned}
\left\| \nabla \sigma \right\|_{L^{2,\infty} \left(  B_t(p) \right) } &\le C \left\| \langle H \nabla \Phi , \nabla^\perp \vec{R} \rangle \right\|_{L^{1} \left(  B_t(p) \right) } \\ 
&\le  C \left\| \nabla \vec{R} \right\|_{L^{2} \left(  B_t(p) \right) } \left\| H \nabla \Phi \right\|_{L^{2} \left(  B_t(p) \right) }.
\end{aligned}
\end{equation}
Exploiting the duality of $L^{2,1}$ and $L^{2, \infty}$ , (\ref{controlel21surlapartiepleines}) and (\ref{controlesigmal2infini}) yield
\begin{equation}
\label{controlel2sigma}
\begin{aligned}
\left\| \nabla \sigma \right\|^2_{L^{2} \left(  B_t(p) \right) } &\le \left\| \nabla \sigma \right\|_{L^{2,\infty} \left(  B_t(p) \right) }\| \nabla \sigma \|_{L^{2,1} \left( B_t(p) \right)} \\
&\le C \left( \left\| \nabla \n \right\|_{L^2 \left( \D \right)} \right) \| \nabla \vec{R} \|^2_{L^2 \left( B_t(p) \right) } \left\| H \nabla \Phi \right\|_{L^{2} \left( \D \right) }.
\end{aligned}
\end{equation}
Working similarly with $\vec{\rho}$ we find 
\begin{equation}
\label{controlel2rho}
\left\| \nabla \vec{\rho} \right\|^2_{L^{2} \left(  B_t(p) \right) } \le  C \left( \left\| \nabla \n \right\|_{L^2 \left( \D \right)} \right) \left( \| \nabla \vec{R} \|^2_{L^2 \left( B_t(p) \right) } + \| \nabla S \|^2_{L^2 \left( B_t(p) \right) }  \right) \left\| H \nabla \Phi \right\|_{L^{2} \left(  \D \right) }.
\end{equation}
We remind the reader that the constant from theorems  \ref{wentel21} and \ref{calderonzygmundl1l2infini} are universal due to the scale invariance properties of the $L^2$, $L^{2, \infty}$ and $L^{2,1}$ norms. The constants in (\ref{controlel2sigma}) and (\ref{controlel2rho}) then do depend solely on $\left\| \nabla \n \right\|_{L^2 (\D ) }$.

We can combine (\ref{controlel2partieharmonique}), (\ref{controlel2sigma}) and  (\ref{controlel2rho}) to get 
\begin{equation}
\label{diminuerlerayonsurunepetiteboule}
\begin{aligned}
\left\|  \nabla S \right\|^2_{L^2 \left( B_{\frac{t}{2} } (p) \right)}  + \|  \nabla \vec{R} \|^2_{L^2 \left( B_{\frac{t}{2} } (p) \right)}  &\le \frac{1}{2} \left( \left\|  \nabla s \right\|^2_{L^2 \left( B_{t } (p) \right)}  +  \| \nabla \vec{r}  \|^2_{L^2 \left( B_{t } (p) \right)} \right) \\&+2C \left( \left\| \nabla \n \right\|_{L^2 \left( \D \right)} \right) \left( \| \nabla \vec{R} \|^2_{L^2 \left( B_t(p) \right) } + \| \nabla S \|^2_{L^2 \left( B_t(p) \right) }  \right) \left\| H \nabla \Phi \right\|_{L^{2} \left(  \D \right) }
\\
&\le \left( \frac{1}{2} +  \left\| H \nabla \Phi \right\|_{L^{2} \left( \D \right) }  C \right) 
\left( \left\|  \nabla S\right\|^2_{L^2 \left( B_{t } (p) \right)}  +  \| \nabla \vec{R}  \|^2_{L^2 \left( B_{t } (p) \right)} \right),
\end{aligned}
\end{equation}
where $C$ depends solely on $\left\| \nabla \n \right\|_{L^2\left( \D \right)}$. Should $ \left\| H \nabla \Phi \right\|_{L^{2} \left( \D \right) }$ be small enough 
then (\ref{diminuerlerayonsurunepetiteboule}) would yield 
\begin{equation}
\label{diminuerlerayonsurunepetiteboulequantifie}
\begin{aligned}
\left\|  \nabla S \right\|^2_{L^2 \left( B_{\frac{t}{2} } (p) \right)}  + \|  \nabla \vec{R} \|^2_{L^2 \left( B_{\frac{t}{2} } (p) \right)}&\le \frac{3}{4} \left( \left\|  \nabla S\right\|^2_{L^2 \left( B_{t } (p) \right)}  +  \| \nabla \vec{R}  \|^2_{L^2 \left( B_{t } (p) \right)} \right).
\end{aligned}
\end{equation}

Since the chosen $\varepsilon_0'$ depends only of $\left\| \nabla \n \right\|_{L^2 \left( \D \right)}$, (\ref{diminuerlerayonsurunepetiteboulequantifie}) is uniformly true for all $B_l(p) \subset \D_{\frac{2r+r'}{3}}$ and yields a Morrey-type estimate on $\D_{\frac{2r+r'}{3}}$. Through usual estimates on Riesz potentials, see for instance theorem 3.1 in \cite{bibrieszpot} , it entails

\begin{equation}
\exists q>2 \text{ s.t. }\quad \left\|  \nabla S\right\|_{L^q \left( \D_{\frac{3r+r'}{4}} \right) }  +   \left\|  \nabla \vec{R} \right\|_{L^q \left( \D_{\frac{3r+r'}{4}} \right) }   \le C_q  \left( \left\|  \nabla S\right\|_{L^2 \left( \D_{\frac{r+r'}{2}} \right) }  +  \left\| \nabla \vec{R}  \right\|_{L^2 \left( \D_{\frac{r+r'}{2}} \right) } \right) .
\end{equation}
\newline
\underline{ \bf{ Step 3: }} {\bf $L^\infty$ control on $H \nabla \Phi$ } \newline
Thanks to Step 2 and (\ref{majorerhnablaphiparnablaR}) we deduce 
$$\left\| H \nabla \Phi \right\|_{L^q \left( \D_{\frac{3r+r'}{4}} \right) } \le  C_q  \left( \| \nabla S \|_{L^2 \left( \D_{\frac{r+r'}{2}} \right) } + \| \nabla \vec{R} \|_{L^2 \left( \D_{\frac{r+r'}{2}} \right) } \right).$$
The criticality of system (\ref{systemenRSPhiannexmieux}) is thus broken: $\Delta S, \Delta \vec{R}$ are in $L^{\frac{q}{2}}$ with $\frac{q}{2}>1$. One can apply classic Calder\'on-Zygmund theory (see for instance theorem 9.9 and 9.11 of \cite{bibellipticpartialdifferentialequations}) to start a bootstrap of limiting regularity $L^\infty$ on $ H \nabla \Phi $. \emph{In fine}, one has with estimate (\ref{controlerspourlemmeepsilonreghfaible})

\begin{equation}
\label{regularitemaxpetith}
 \left\| \nabla S \right\|_{W^{1,p} \left( \D_{\frac{4r+r'}{5}} \right) } + \left\|  \nabla \vec{R}  \right\|_{W^{1,p} \left( \D_{\frac{4r+r'}{5}} \right) } + \| H\nabla \Phi \|_{L^\infty\left( \D_{\frac{4r+r'}{5}} \right) }   \le      C { \left\| H \nabla \Phi \right\|_{L^2 \left( \D \right)}}
\end{equation}
for all $p< \infty$. Here $C$ is a real constant which depends on $r$, $r'$, $C_0$ and $C_1$.
\newline
\underline{ \bf{ Step 4: }} {\bf $W^{3,p}$ control on $\Phi$ }\newline
The control on $\nabla \Phi$ is obtained by a similar  Calder\'on-Zygmund bootstrap on equation
$$ 2 \Delta \Phi = \nabla^\perp S \nabla \Phi + \nabla^\perp \vec{R} \times \nabla \Phi,$$
which achieves the proof.
\end{proof}

One only needs to combine theorems \ref{epsilonreularitylintro} and \ref{epsilonregularitehnablaphifaibleLL2inftyi} to prove theorem \ref{epsilonregularitehnablaphifaibleintro}.

Theorems \ref{epsilonregularitehnablaphifaibleintro} and \ref{epsilonregularitehnablaphifaibleLL2inftyi} can be applied on disks of arbitrari radii, at the cost of a control depending on the radius of the disk. Indeed a rescaling, very similar to what has already been done in the proof of corollary \ref{controlconformalfactorcor} yields the following theorem.

\begin{theo}
\label{epsilonregularitehnablaphifaibleLL2inftyscale}
Let $\Phi \in \mathcal{E}\left(\D_\rho \right)$  satisfy the hypotheses of theorem \ref{epsilonreularitylintro}.
We assume there exists $r'<1$ and $C_1>0$ such that $$ \left\| \Lr e^{\lambda} \right\|_{L^{2,\infty} \left( \D_{r'\rho} \right)} \le C_1 \left\| H \nabla \Phi \right\|_{L^2 \left( \D_\rho \right) }.$$
Then there exists $\varepsilon_0'$  depending only on $C_0$ such that if $$  \left\| H \nabla \Phi \right\|_{L^2 \left( \D_\rho \right) }  \le \varepsilon_0',$$ then for any $r< r'$ there exists a constant  $ C \in \R$  depending on  $r$, $C_0$, $p$ and $C_1$ such that
$$\left\| H \nabla \Phi \right\|_{L^\infty \left( \D_{ r \rho } \right)} \le   \frac{C}{\rho} { \left\| H \nabla \Phi \right\|_{L^2 \left( \D_\rho \right)}}.$$ 
\end{theo}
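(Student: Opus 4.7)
The proof will be a straightforward rescaling reduction to Theorem \ref{epsilonregularitehnablaphifaibleLL2inftyi}, exactly mirroring the way Corollary \ref{controlconformalfactorcor} is deduced from Theorem \ref{controlconformalfactor}. The plan is to set $\tilde\Phi(z):=\Phi(\rho z)$ on $\D$, record how each quantity in the hypotheses and conclusion rescales, apply the unit-disk version to $\tilde\Phi$, and rescale back.

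First I would establish the scaling table. Direct differentiation gives $e^{\tilde\lambda}(z)=\rho\, e^{\lambda}(\rho z)$ and $\tilde\n(z)=\n(\rho z)$ (as in the proof of Corollary \ref{controlconformalfactorcor}), hence $\tilde{\vec H}(z)=\vec H(\rho z)$ since the mean curvature is intrinsic. For the first Willmore quantity, each term on the right-hand side of (\ref{definitionl}) picks up a single factor of $\rho$ when the base point is changed from $\rho z$ to $z$, so the natural choice $\tilde\Lr(z):=\Lr(\rho z)$ satisfies the defining equation for $\tilde\Phi$. Consequently
\[
|\tilde H\nabla\tilde\Phi|(z)=\rho\,|H\nabla\Phi|(\rho z),\qquad (\tilde\Lr\, e^{\tilde\lambda})(z)=\rho\,(\Lr\, e^\lambda)(\rho z).
\]

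Next I would check the hypotheses of Theorem \ref{epsilonregularitehnablaphifaibleLL2inftyi} for $\tilde\Phi$. The $L^{2,\infty}$ norm of $\nabla\lambda$ and the $L^2$ norm of $\nabla\n$ are both scale-invariant in two dimensions, so $\|\nabla\tilde\lambda\|_{L^{2,\infty}(\D)}+\|\nabla\tilde\n\|_{L^2(\D)}\le C_0$. A change of variables in the distribution function of $\tilde\Lr e^{\tilde\lambda}$ (using the scaling above) yields
\[
\|\tilde\Lr e^{\tilde\lambda}\|_{L^{2,\infty}(\D_{r'})}=\|\Lr e^\lambda\|_{L^{2,\infty}(\D_{r'\rho})},
\]
and the same kind of change of variables gives $\|\tilde H\nabla\tilde\Phi\|_{L^2(\D)}=\|H\nabla\Phi\|_{L^2(\D_\rho)}$. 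Combining these two identities transforms the Lorentz-norm bound on $\Lr e^\lambda$ and the smallness assumption on $H\nabla\Phi$ into the corresponding hypotheses for $\tilde\Phi$ on $\D$, with the \emph{same} constants $C_1$ and $\varepsilon_0'$.

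Finally I would apply Theorem \ref{epsilonregularitehnablaphifaibleLL2inftyi} to $\tilde\Phi$, producing
\[
\|\tilde H\nabla\tilde\Phi\|_{L^\infty(\D_r)}\le C\,\|\tilde H\nabla\tilde\Phi\|_{L^2(\D)}
\]
with $C=C(r,C_0,p,C_1)$, and translate back: the left-hand side equals $\rho\,\|H\nabla\Phi\|_{L^\infty(\D_{r\rho})}$ and the right-hand side equals $C\,\|H\nabla\Phi\|_{L^2(\D_\rho)}$, yielding the claimed estimate with an extra $1/\rho$ factor. There is essentially no obstacle here beyond the careful bookkeeping of the single extra $\rho$ carried by $e^{\tilde\lambda}$ (and hence by $\tilde\Lr e^{\tilde\lambda}$ and by $\tilde H\nabla\tilde\Phi$ in $L^\infty$); once this scaling table is written down, every hypothesis transfers for free thanks to the conformal/scale invariance of the $L^2$ and $L^{2,\infty}$ norms in dimension two.
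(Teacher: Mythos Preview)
Your proposal is correct and follows exactly the approach indicated in the paper, which merely states that ``a rescaling, very similar to what has already been done in the proof of corollary \ref{controlconformalfactorcor}'' yields the result. Your scaling table and the verification that the $L^2$ and $L^{2,\infty}$ norms transfer with the same constants are precisely the bookkeeping the paper leaves implicit.
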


\section{Control of $\Lr e^\lambda$ on an annulus}
\label{lasection3}
In this section, we focus on a control of $\Lr$ on annuli of small energy, independantly of its conformal class (see (VI.23) in \cite{bibenergyquant}).
\begin{theo}
\label{ll2infinianneau}
Let $R>0$ and $\Phi \in \mathcal{E}\left(\D_R \right)$ be a conformal weak Willmore immersion.  Let  $\n$ denote its Gauss map, $H$ its mean curvature and $\lambda$ its conformal factor.
We assume $$ \left\| \nabla \n \right\|_{L^2\left(\D_R \right) } +\left\| \nabla \lambda \right\|_{L^{2,\infty} \left( \D_R \right)} \le C_0.$$
Then there exists $\varepsilon_0>0$ (independant of $\Phi$) such that if  $0 < 8r < R$ and 
$$ \sup_{r<s<\frac{R}{2} } \int_{\D_{2s} \backslash \D_s } \left| \nabla \n \right|^2 \le \varepsilon_0,$$
 then  there exists $\vec{\mathcal{L}} \in \R^3$ and $C \in \R$ depending on $C_0$ but not on the conformal class of $\D_R \backslash \D_r$ such that 
$$\left\| e^{\lambda} \left( \Lr -\vec{ \mathcal{L}} \right) \right\|_{L^{2,\infty} \left( \D_{\frac{R}{2}} \backslash \D_{2r} \right)} \le C { \left\| H \nabla \Phi \right\|_{L^2 \left( \D_R \right)}},$$ 
where $\Lr$ is given by (\ref{LRSintro}).
\end{theo}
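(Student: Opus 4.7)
The approach is a dyadic decomposition of the annulus $\D_{R/2}\setminus \D_{2r}$ combined with a local application of theorem \ref{epsilonreularitylintro} on each dyadic piece, followed by a gluing argument for the resulting local constants. Concretely, for integers $k$ with $r\le 2^k r\le R/4$, set $s_k=2^k r$ and consider the rings $A_k=\D_{2s_k}\setminus \D_{s_k/2}$. The neck-energy hypothesis, provided $\varepsilon_0$ is taken small enough, ensures that any ball of radius comparable to $s_k$ centred inside $A_k$ carries less than $8\pi/6$ of $\nabla\n$-energy, and hence the parameter $r_0$ appearing in \eqref{ler0} is bounded by a universal constant after rescaling by $s_k$.

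I will then exploit the rescaling $\Phi(s_k\,\cdot)$, which preserves $\|\nabla\n\|_{L^2}$ and $\|\nabla\lambda\|_{L^{2,\infty}}$, to work on the fixed ring $\D_2\setminus \D_{1/2}$. Covering this ring by finitely many unit-scale disks, applying theorem \ref{epsilonreularitylintro} on each and using a Vitali-type overlap argument (as in the proof of corollary \ref{controlconformalfactorsanspetit}), one produces a single local constant $\vec{\mathcal L}_k\in\R^3$ with
\[\|e^\lambda(\Lr -\vec{\mathcal L}_k)\|_{L^{2,\infty}(A_k)}\le C\|H\nabla\Phi\|_{L^2(\D_R)},\]
where $C$ depends only on $C_0$. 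Because consecutive rings $A_k$ and $A_{k+1}$ overlap on a dyadic ring of fixed shape (after rescaling by $s_k$), combining this estimate for $k$ and $k+1$ with the Harnack inequality of corollary \ref{controlconformalfactorsanspetit} --- which identifies $e^\lambda$ with a fixed value $e^{\bar\lambda_k}$ on the overlap up to a universal multiplicative constant --- yields the comparison
\[e^{\bar\lambda_k}|\vec{\mathcal L}_k - \vec{\mathcal L}_{k+1}|\le C\|H\nabla\Phi\|_{L^2(A_k\cup A_{k+1})}.\]

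The main obstacle is the final assembly into a single global $\vec{\mathcal L}$ with a bound independent of the conformal class $\log(R/r)$. Decomposing $\Lr - \vec{\mathcal L}=(\Lr-\vec{\mathcal L}_k)+(\vec{\mathcal L}_k - \vec{\mathcal L})$ on $A_k$, the first summand is handled by the local estimate, while the second requires showing that the piecewise-constant function $x\mapsto e^{\lambda(x)}(\vec{\mathcal L}_k-\vec{\mathcal L})$ on $x\in A_k$ lies in $L^{2,\infty}(\D_{R/2}\setminus \D_{2r})$ with a norm independent of the number of dyadic scales. A naive telescoping of the comparison estimate above would lose a $\sqrt{\log(R/r)}$ factor via Cauchy--Schwarz over the scales; the correct argument must instead exploit that the $L^{2,\infty}$ norm is computed through a supremum over levels of the distribution function rather than through summation. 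With a careful choice of $\vec{\mathcal L}$ (for instance a weighted mean of the $\vec{\mathcal L}_k$ adapted to the dyadic behaviour of $e^\lambda$, or the limit of the $\vec{\mathcal L}_k$ at one end of the annulus), the scale invariance of the $L^{2,\infty}$ norm then closes the argument and yields the desired bound depending only on $C_0$.
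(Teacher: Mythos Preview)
Your proposal has a genuine gap at exactly the point you flag as ``the main obstacle'': the assembly of the local constants $\vec{\mathcal L}_k$ into a single $\vec{\mathcal L}$ with a bound independent of $\log(R/r)$. You do not actually close this step; the appeal to ``the $L^{2,\infty}$ norm being a supremum'' and to ``a careful choice of $\vec{\mathcal L}$'' is not an argument. Concretely, the only information your local estimates give about consecutive constants is $e^{\bar\lambda_k}|\vec{\mathcal L}_k-\vec{\mathcal L}_{k+1}|\le C\|H\nabla\Phi\|_{L^2(A_k\cup A_{k+1})}$, and telescoping this from $k$ to $j$ introduces the ratios $e^{\bar\lambda_k}/e^{\bar\lambda_j}$, which you have no control over across the whole annulus --- corollary \ref{controlconformalfactorsanspetit} gives you a Harnack inequality only on each fixed dyadic ring, not from ring to ring. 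No rearrangement of the $L^{2,\infty}$ computation repairs this without extra input on the global behaviour of $e^\lambda$.

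The paper's proof proceeds quite differently and supplies precisely the two missing ingredients. First, it invokes the Bernard--Rivi\`ere lemma (lemma V.3 of \cite{bibenergyquant}) which gives a \emph{global} Harnack-type estimate on the whole annulus, $e^{\lambda(x)}\simeq e^A|x|^d$, with a single exponent $d$; this replaces your uncontrolled ratios $e^{\bar\lambda_k}/e^{\bar\lambda_j}$ by the explicit power $(s_k/s_j)^d$. Second, and more importantly, instead of gluing discrete constants the paper works with the continuous family of circle averages $\Lr_t=\frac{1}{|\partial\D_t|}\int_{\partial\D_t}\Lr$ and computes $\frac{d\Lr_t}{dt}$ directly from \eqref{eqauxsurleL}: the $\nabla\vec H$ contribution integrates to zero along each circle, leaving only the quadratic terms $H\nabla\n$ and $\nabla^\perp\n\times\vec H$. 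This cancellation yields $|d\Lr_t/dt|\le Ce^{-A}t^{-d}\delta(t)\tilde\delta(t)$, and an integration by parts against $t^{1+d}$ together with Cauchy--Schwarz on $\int s\,\delta^2$ and $\int s\,\tilde\delta^2$ gives the bound with the right dependence. Your dyadic-gluing scheme sees none of this cancellation and, as it stands, cannot avoid the logarithmic loss you anticipate.
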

Once more, we will follow Y. Bernard and T. Rivière's proof, with a few tweaks in order to obtain a control of $\Lr e^\lambda$ by $H\nabla \Phi$ instead of $\nabla \n$. It is important for $\Phi$ to be well-defined, and the bound on its conformal factor and Gauss map to stand, on the whole disk and not merely on the annulus. We refer the reader to \cite{MR3843372} for a study of what can happen otherwise.  In the context of  theorem \ref{energyquandberriv}, theorem \ref{ll2infinianneau} gives controls on the neck regions around the concentration points.
\begin{proof}
\underline{ \bf{ Step 1: }} {\bf Pointwise estimates  on $\vec{H}$ and $\nabla \vec{H}$}\newline
We set ourselves in the setting of theorem \ref{ll2infinianneau} and consider $\Phi \in \mathcal{E}\left(\D_R \right)$ a conformal weak Willmore immersions of Gauss map $\n$, mean curvature $H$, conformal factor $\lambda$ and tracefree second fundamental form $\Ar$.
We assume that $$\left\| \nabla \n \right\|_{L^2\left(\D_R \right) }+\| \nabla \lambda \|_{L^{2,\infty} \left( \D_R \right)} \le C_0< \infty,$$ and that 
\begin{equation}
\label{lhypothesesurlenpetit}
\sup_{r<s<\frac{R}{2} } \int_{\D_{2s} \backslash \D_s} |\nabla \n |^2 \le \varepsilon_0.
\end{equation}

Consider $x \in \D_{\frac{R}{2}} \backslash \D_{2r}$, then $B_{\frac{|x|}{4}} (x) \subset \D_{2|x|} \backslash \D_{\frac{|x|}{2} }$ and thus (\ref{lhypothesesurlenpetit}) implies \begin{equation} \label{onestpetitx}\int_{B_{\frac{|x|}{4}} (x)} |\nabla \n |^2 \le \varepsilon_0.\end{equation}
On $B_{\frac{|x|}{4}} (x)$ one can then apply either theorem \ref{epsilonregularityRivierecontrolphi}, or theorem \ref{epsilonregularitehnablaphifaibleintro} (with $r_0= 1$ since (\ref{onestpetitx}) stands) to deduce 
\begin{equation}
\label{oncontrolelenpointwise}
\left\| \nabla \n \right\|_{L^\infty\left(B_{\frac{|x|}{8}} (x)\right)} \le \frac{C}{|x|} \left\| \nabla \n \right\|_{L^2\left(B_{\frac{|x|}{4}} (x)\right)},
\end{equation}
and 
\begin{equation}
\label{oncontroleleHpointwise}
\left\| H \nabla \Phi \right\|_{L^\infty\left(B_{\frac{|x|}{8}} (x)\right)} \le \frac{C}{|x|} \left\|H \nabla \Phi \right\|_{L^2\left(B_{\frac{|x|}{4}} (x)\right)}.
\end{equation}
Here $C$ depends on $C_0$. Corollary \ref{controlconformalfactorcor} then ensures a Harnack inequality on $B_{\frac{|x|}{8}} (x)$, meaning there exists $\Lambda \in \R$ and $C$ depending only on $C_0$ such that for all $p \in B_{\frac{|x|}{8}} (x)$ we have
\begin{equation}\label{leharnacksurlabouleenx}\frac{e^\Lambda}{C} \le e^{\lambda (p)} \le Ce^\Lambda.\end{equation} This allows one to control $H$ with (\ref{oncontroleleHpointwise}):
\begin{equation}
\label{oncontroleleHlevraipointwise}
\left\| H  \right\|_{L^\infty\left(B_{\frac{|x|}{8}} (x)\right)} \le \frac{Ce^{-\Lambda}}{|x|} \left\|H \nabla \Phi \right\|_{L^2\left(B_{\frac{|x|}{4}} (x)\right)}.
\end{equation}
Since $\Phi$ is Willmore, it satisfies (\ref{lequationdewillmoreclassique}):
$$\Delta H + \big| \Ar \big|^2 H = 0.$$
Combining  (\ref{oncontrolelenpointwise}), (\ref{oncontroleleHlevraipointwise}) and (\ref{Arborneparnablanannex}) yields
$$\left\| \big| \Ar \big|^2 H \right\|_{L^\infty\left(B_{\frac{|x|}{8}} (x)\right)} \le \frac{Ce^{-\Lambda}}{|x|^3} \left\|H \nabla \Phi \right\|_{L^2\left(B_{\frac{|x|}{4}} (x)\right)}.$$
Then $$\left\| \Delta H \right\|_{L^\infty\left(B_{\frac{|x|}{8}} (x)\right)} \le \frac{Ce^{-\Lambda}}{|x|^3} \left\|H \nabla \Phi \right\|_{L^2\left(B_{\frac{|x|}{4}} (x)\right)}.$$
Classic Calder\'on-Zygmund results (see for instance theorem 9.9 and 9.11 of \cite{bibellipticpartialdifferentialequations}) ensure that 
\begin{equation} \label{oncontrolelenablaHparHetcestbien}\left\| \nabla H \right\|_{L^\infty\left(B_{\frac{|x|}{16}} (x)\right)} \le \frac{Ce^{-\Lambda}}{|x|^2} \left\|H \nabla \Phi \right\|_{L^2\left(B_{\frac{|x|}{4}} (x)\right)}.\end{equation}
Combining first (\ref{oncontroleleHpointwise}) and (\ref{leharnacksurlabouleenx}), and then  (\ref{oncontrolelenablaHparHetcestbien}) and (\ref{leharnacksurlabouleenx}) yields when evaluated at $x$
\begin{equation}
\label{labonneestimeepointwiseH}
e^{\lambda(x)} \left| H(x) \right| \le C \delta(|x|),
\end{equation}
\begin{equation}
\label{labonneestimeepointwisenablaH}
e^{\lambda(x)} \left| \nabla H(x) \right| \le \frac{C}{|x|} \delta(|x|),
\end{equation}
where $$\delta(s) = \frac{1}{s} \left\| H \nabla \Phi \right\|_{L^2 \left(\D_{2s} \backslash \D_{\frac{s}{2}}  \right)}.$$
Since $\nabla \vec{H} = \nabla H \n + H \nabla \n$, we can extend (\ref{labonneestimeepointwiseH}) and (\ref{labonneestimeepointwisenablaH}) to $\vec{H}$ and $\nabla \vec{H}$ thanks to (\ref{oncontrolelenpointwise}), which yields the desired estimates.
\newline
\underline{ \bf{ Step 2: }} {\bf Controls on $\delta$}\newline
Clearly we have \begin{equation} \label{lestimeeendelta} s \delta(s) \le \left\| H \nabla \Phi \right\|_{L^2 \left( \D_R \backslash \D_{\frac{r}{2}} \right)}. \end{equation}
Further, for any  positive function $f$:
\begin{equation} \label{lecontroleaveclepetitlogdedans}\begin{aligned} \int_r^{\frac{R}{2}} \frac{1}{s} \int_{\frac{s}{2}}^{2s} f(t)dt ds &\le \int_{\frac{r}{2}}^R \int_{\frac{t}{2}}^{2t} \frac{1}{s} f(t) ds dt \\
&\le \int_{\frac{r}{2}}^R f(t) \log \left( \frac{2t}{\frac{t}{2}} \right) dt \\
&\le \log  4 \int_{\frac{r}{2}}^R f(t)dt. \end{aligned}\end{equation}
Applying (\ref{lecontroleaveclepetitlogdedans}) with $f(t)=\int_{\partial \D_t} \left| H \nabla \Phi\right|^2d\sigma_{\partial \D_t}$, we find 
\begin{equation}
\label{uneautreestimeesurdelta}
\int_r^{\frac{R}{2}} s \delta^2(s)ds \le \log 4 \left\| H \nabla \Phi \right\|^2_{L^2\left( \D_R \backslash \D_{\frac{r}{2}}\right)},
\end{equation}
while  with $\tilde f(t)=\int_{\partial \D_t} \left| \nabla \n\right|^2d\sigma_{\partial \D_t}$, this yields  (VI.9) in \cite{bibenergyquant}: 
\begin{equation}
\label{uneautreestimeesurdeltatilde}
\int_r^{\frac{R}{2}} s \tilde \delta^2(s)ds \le \log 4 \left\| \nabla \n \right\|^2_{L^2\left( \D_R \backslash \D_{\frac{r}{2}}\right)},
\end{equation}
 where$$\tilde \delta (s) =  \frac{1}{s} \left\| \nabla \n \right\|_{L^2 \left(\D_{2s} \backslash \D_{\frac{s}{2}}  \right)}.$$  
\newline
\underline{ \bf{ Step 2: }} {\bf Exploitation and control of $\Lr$ }\newline
Let $\Lr$ be a first Willmore quantity of $\Phi$ on $\D_R$, i.e. satisfying (\ref{definitionl}). From (\ref{definitionl}), (\ref{oncontrolelenpointwise}), (\ref{labonneestimeepointwiseH}) and (\ref{labonneestimeepointwisenablaH}) we deduce for all $x \in \D_{\frac{R}{2}} \backslash \D_{2r}$
\begin{equation}
\label{estimeepointwiseonnablaL}
\left|\nabla \Lr\right|(x) \le \frac{C e^{-\lambda(x)}}{|x|} \delta( |x|).
\end{equation}
We consider for any $r\le t \le R$
$$\Lr_t:= \frac{1}{\left|\partial \D_t \right|} \int_{\partial \D_t} \Lr d\sigma_{\partial \D_t}.$$
Then given $x \in \D_{\frac{R}{2} } \backslash \D_{2r}$
\begin{equation} \label{eqauxint}\begin{aligned}\left| \Lr(x) - \Lr_{|x|} \right| &\le \int_{\partial \D_{|x|}} \left|\nabla \Lr \right| d\sigma_{\partial \D_{|x|} } \\
&\le \int_{\partial \D_{|x|}} \frac{C e^{-\lambda(x)}}{|x|} \delta( |x|)d\sigma_{\partial \D_{|x|} } \\
&\le C \delta(|x|) \int_0^{2\pi}  e^{-\lambda(|x|e^{i\theta})}d\theta. \end{aligned}\end{equation}
One key step of the proof is controlling the conformal factor with a Harnack inequality. However as the conformal class of the annulus degenerates, the number of small energy disks needed to cover it goes to infinity. Thus, we have no hope to properly estimate the conformal factor by a constant. Y. Bernard and T. Rivière have however shown that a function of type $r^d$ can be a good approximation, as stated in lemma V.3 of \cite{bibenergyquant} (see below).
\begin{lem}
There exists a constant $\eta>0$ with the following property. Let $0<4r<R< \infty$. If $\Phi$ is any (weak) conformal immersion of $\Omega:= \D_R \backslash \D_r$ into $\R^3$ with $L^2$-bounded second fundamental form and satisfying
$$\left\| \nabla \n \right\|_{L^{2,\infty} \left(\Omega \right)} < \sqrt{\eta},$$
then there exist $\frac{1}{2} < \alpha < 1$ and $A \in \R$ depending on $R$, $r$, $m$ and $\Phi$ such that 
\begin{equation}
\label{leharnackdanslecou}
\left\| \lambda (x) - d \log |x| - A \right\|_{L^\infty \left( \D_{\alpha R} \backslash \D_{\frac{r}{\alpha}} \right)} \le C \left( \left\| \nabla \lambda \right\|_{L^{2,\infty} \left( \Omega \right)} + \int_\Omega \left| \nabla \n \right|^2 \right),
\end{equation} 
where $d$ satisfies 
\begin{equation}
\begin{aligned}
\left| 2 \pi d - \int_{\partial \D_r } \partial_r \lambda dl_{\partial \D_r} \right| \le& C \left[ \int_{\D_{2r} \backslash \D_r } \left| \nabla \n \right|^2 \right.\\
&\left. + \frac{1}{\log \frac{R}{r}} \left( \left\| \nabla \lambda \right\|_{L^{2,\infty} \left(\Omega \right)} + \int_{\Omega} \left| \nabla \n \right|^2 \right)\right].
\end{aligned}
\end{equation}
\end{lem}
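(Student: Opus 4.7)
The strategy parallels the disk case (theorem \ref{controlconformalfactor}): put the Liouville equation in divergence form via a Coulomb frame and apply Wente-type lemmas. The novelty forced by the annular setting is that, as $R/r$ may be arbitrarily large, $\lambda$ can contain a genuine logarithmic piece $d\log|x|$ that must be extracted before any pointwise estimate can be hoped for. The first step is therefore to invoke a version of Hélein's Coulomb frame construction that works under a smallness assumption in $L^{2,\infty}$ (a scale-invariant norm, hence compatible with degenerating conformal classes): this produces a tangent frame $(\vec e_1, \vec e_2)$ on $\Omega$ with $\|\nabla \vec e_i\|_{L^{2,\infty}(\Omega)} \le C\sqrt\eta$, in which the Gauss--Liouville equation takes the pure Jacobian form $-\Delta \lambda = \nabla \vec e_1\cdot \nabla^\perp \vec e_2$.

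Next I decompose $\lambda(x) = \bar\lambda(|x|) + \widetilde\lambda(x)$, where $\bar\lambda(t)$ is the average of $\lambda$ over the circle of radius $t$. The radial part $\bar\lambda$ satisfies a second-order linear ODE in $\log t$ whose forcing is the circular average of the Jacobian. Integrating this ODE on $(r, t)$ yields $\bar\lambda(t) = d\log t + A + \rho(t)$ with $\rho$ a bounded remainder, and naturally identifies $d$ with $\frac{1}{2\pi}\int_{\partial\D_r}\partial_r\lambda$ up to an error. The error splits into two pieces: one coming from the Jacobian forcing in the innermost sub-annulus $\D_{2r}\setminus \D_r$, which gives the $\int_{\D_{2r}\setminus \D_r}|\nabla \n|^2$ term, and a second coming from an averaging over $t\in(r, R)$ whose logarithmic damping by the modulus produces the factor $1/\log(R/r)$ multiplying $\|\nabla \lambda\|_{L^{2,\infty}(\Omega)} + \int_\Omega |\nabla \n|^2$.

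The oscillation $\widetilde\lambda$ has vanishing circular mean and still satisfies a Jacobian equation with the same right-hand side up to the averaged piece. I split $\widetilde\lambda = u + h$, where $u$ solves the Dirichlet problem on $\Omega$ with right-hand side the Jacobian and $h$ is harmonic carrying the boundary data. For $u$, the Wente/duality argument between $L^{2,1}$ and $L^{2,\infty}$ applied to $\nabla \vec e_1\cdot \nabla^\perp \vec e_2$ delivers the $L^\infty$ bound by $\int_\Omega |\nabla \n|^2$; for $h$, a standard trace inequality controls it by $\|\nabla \lambda\|_{L^{2,\infty}(\Omega)}$. The shrinkage to $\D_{\alpha R}\setminus \D_{r/\alpha}$ for $\alpha<1$ is needed precisely to stay away from the two circular boundaries, where the harmonic extension of the boundary data could otherwise exhibit its own logarithmic blow-up; choosing $\alpha$ close enough to $1$ to lose only a universal constant factor produces the announced $L^\infty$ estimate.

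The key technical obstacle, and the reason a simple Harnack covering argument à la corollary \ref{controlconformalfactorsanspetit} is insufficient, is that uniformity in the modulus $R/r$ must be preserved: covering the annulus by small-energy disks would force the number of disks, hence the constant, to diverge logarithmically in $R/r$. This dictates two choices in the proof: first, the logarithmic part $d\log|x|$ must be isolated \emph{before} any Harnack-type reasoning; second, every estimate on the Jacobian right-hand side and its primitive must be carried out in scale-invariant (Lorentz) norms so that constants do not accumulate along annular dyadic scales. Once these two principles are respected, the arguments of the disk case transplant with only the bookkeeping adjustments sketched above.
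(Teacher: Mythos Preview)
The paper does not provide its own proof of this lemma: it is quoted verbatim as lemma V.3 of \cite{bibenergyquant} (Bernard--Rivière), and is used as a black box in the proof of theorem \ref{ll2infinianneau}. There is therefore no proof in the present paper to compare your sketch against.

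That said, your outline is consistent with the argument in \cite{bibenergyquant}: construction of a Coulomb frame under an $L^{2,\infty}$ smallness hypothesis, extraction of the radially averaged logarithmic piece $d\log|x|$ from the Liouville equation, and Wente-type control of the remainder. Your explanation of why a naive covering argument fails (loss of uniformity in the modulus $R/r$) is exactly the point the paper makes just before quoting the lemma. If you intend this as more than a sketch, the steps that would require genuine care are the $L^{2,\infty}$ Coulomb frame existence (this is not the classical $L^2$ Hélein construction and needs the Lorentz-space version), and the control of the harmonic part $h$ on a degenerating annulus, where one must check that the Fourier/Laurent modes decay uniformly in the modulus once one retreats to $\D_{\alpha R}\setminus\D_{r/\alpha}$; a trace inequality alone, as you invoke, is not quite enough without this modal analysis.
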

In our case, (\ref{leharnackdanslecou}) implies the following Harnack inequality for all $x \in \D_{\frac{R}{2} } \backslash \D_{2r}$
\begin{equation}
\label{levraiharnackcou}
\frac{e^A |x|^d}{C} \le e^{\lambda(x)} \le Ce^A |x|^d,
\end{equation}
with $d$, $A$ in $\R$, and $C$ a constant depending on $C_0$. Then (\ref{eqauxint}) yields
\begin{equation} \label{equationauxxint2} \begin{aligned}\left| \Lr(x) - \Lr_{|x|} \right|
&\le C \delta(|x|)   e^{-\lambda(x)}, \end{aligned}\end{equation}
with $C$ depending on $C_0$. We can then estimate $\Lr - \Lr_{|x|}$  with (\ref{uneautreestimeesurdelta}) and (\ref{equationauxxint2}):
\begin{equation}
\label{equationauxint3}
\int_{\D_{\frac{R}{2}} \backslash \D_{2r}} e^{2\lambda} \left| \Lr - \Lr_{|x|} \right|^2 dx \le C \int_{2r}^{\frac{R}{2}} r \delta^2(r)dr \le C \left\| H \nabla \Phi \right\|^2_{L^2 \left( \D_R \backslash \D_{\frac{r}{2}}\right) }.
\end{equation}

We will control similarly $\frac{d \Lr_t}{dt} = \frac{1}{2\pi} \int_0^{2\pi} \frac{\partial \Lr}{\partial t } (t, \theta) d\theta$.
We use expression (\ref{eqauxsurleL}) of $\nabla \Lr$  and deduce 
$$\frac{1}{2\pi} \int_0^{2\pi} \frac{\partial \Lr}{\partial t } (t, \theta) d\theta = \frac{3}{2\pi} \int_0^{2\pi} H \partial_\theta \n d\theta + \frac{1}{2\pi} \int_0^{2\pi} \partial_\nu \n \times \vec{H} d \theta.$$
Using (\ref{oncontrolelenpointwise}), (\ref{labonneestimeepointwiseH}) and  (\ref{levraiharnackcou}) we deduce from this
\begin{equation}
\label{equauxint4}
\left| \frac{d \Lr_t}{dt}  \right| \le C e^{-A} \frac{\delta(t) \tilde \delta (t)}{t^d}.
\end{equation}
Defining $a(t) = \left| \Lr_t \right|$ yields $\left|\frac{d a }{dt} \right| \le \left|  \frac{d \Lr_t}{dt} \right|$ which, combined with (\ref{levraiharnackcou}) and (\ref{equauxint4}) ensures 
\begin{equation}
\label{equauxint5}
\left|\frac{d a }{dt} \right| \le C e^{-A} \frac{\delta(t) \tilde \delta (t)}{t^{d}}.
\end{equation}
Then $$\begin{aligned}\int_{2r}^{\frac{R}{2}} s^{1+d} \left|\frac{d a }{ds} \right|(s)ds &\le Ce^{-A} \int_{2r}^{\frac{R}{2}} s \delta(s) \tilde \delta(s) \\
&\le Ce^{-A} \left( \int_{2r}^{\frac{R}{2}} s \delta(s)^2 ds \right)^{\frac{1}{2}} \left( \int_{2r}^{\frac{R}{2}} s {\tilde \delta}(s)^2ds \right)^{\frac{1}{2}}.\end{aligned}$$
 We can thus apply (\ref{uneautreestimeesurdelta}) and (\ref{uneautreestimeesurdeltatilde}) and conclude 
\begin{equation}
\label{estimeeglobauxre}
\begin{aligned}
\int_{2r}^{\frac{R}{2}} s^{1+d} \left|\frac{d a }{ds} \right|(s)ds &\le  Ce^{-A} \left\| \nabla \n \right\|_{L^2 \left(\D_R \backslash \D_r \right)}  \left\| H \nabla \Phi \right\|_{L^2 \left(\D_R \backslash \D_r \right)}\\&  \le C C_0 e^{-A} \left\| H \nabla \Phi \right\|_{L^2 \left(\D_R \backslash \D_r \right)}.
\end{aligned}
\end{equation}
An integration by parts gives for any $r < \tau < T < R$,
$$\int_\tau^T s^{1+d} \frac{d a }{ds}(s) ds = T^{1+d} a (T) - \tau^{1+d} a(\tau) - (1+d) \int_\tau^T s^d a(s)ds.$$
Hence, since $a\ge 0$, we  have 
\begin{itemize}
\item
if $d \le -1$, for all $2r < t < \frac{R}{2}$, $$t^{1+d} a(t) \le (2r)^{1+d} a(2r) + \int_{2r}^{\frac{R}{2}} s^{1+d} \left|\frac{da}{ds}\right|(s) ds,$$
\item

if $d \ge -1$, for all $2r < t < \frac{R}{2}$, $$t^{1+d} a(t) \le \left(\frac{R}{2} \right)^{1+d} a \left(\frac{R}{2}\right) + \int_{2r}^{\frac{R}{2}} s^{1+d} \left|\frac{da}{ds}\right|(s) ds.$$
\end{itemize}
Then if $d\le -1$ we take $\int_{\partial \D_{2r}} \Lr = 0$ whereas if $d \ge -1$, we take $\int_{\partial \D_{\frac{R}{2}}} \Lr =0$.

In both cases for all $2r < |x| < \frac{R}{2}$, thanks to (\ref{estimeeglobauxre}), we have
\begin{equation}
\label{almostthereestimate}
\begin{aligned}
|x| e^{\lambda(x)} \left| \Lr_{|x|} \right| &\le |x|^{d+1} e^{A} a(|x|) \\
&\le e^{A} \int_{2r}^{\frac{R}{2}} s^{1+d} \left|\frac{da}{dt}\right|(s) ds \\
&\le C \left\| H \nabla \Phi \right\|_{L^2 \left(\D_R \backslash \D_r \right)},
\end{aligned}
\end{equation}
 where $C$ depends only on $C_0$.
Since $\frac{1}{|x|}$ is in $L^{2, \infty}$, we conclude with
\begin{equation}
\label{alllllmost}
\left\| e^{\lambda(x)}  \Lr_{|x|} \right\|_{L^{2,\infty} \left(\D_{\frac{R}{2}} \backslash \D_{2r} \right)} \le  C \left\| H \nabla \Phi \right\|_{L^2 \left(\D_R \backslash \D_r \right)}.
\end{equation}
Combined with (\ref{equationauxint3}), this yields the desired result:
$$\left\| e^{\lambda}  \Lr \right\|_{L^{2,\infty} \left(\D_{\frac{R}{2}} \backslash \D_{2r} \right)} \le  C \left\| H \nabla \Phi \right\|_{L^2 \left(\D_R \backslash \D_r \right)}.$$

The constant appearing in the theorem corresponds to the choice of $\int_{\partial \D_{2r}} \Lr = 0$ or $\int_{\partial \D_{\frac{R}{2}}} \Lr =0$ depending on $d$.
\end{proof}

\section{Simple minimal bubbling: proof of theorem \ref{convminibubbleintro}}
\label{lasection4}
In the following, $\tilde \Phi^\varepsilon:= \frac{\Phi^\varepsilon \left( \varepsilon . \right) - \Phi^\varepsilon (0) }{C^\epsilon} \,:\, \D_{\frac{1}{\varepsilon} } \rightarrow \R^3$ and $\tilde \n^\varepsilon$, $\tilde H^\varepsilon$ $ \tilde \lambda^\varepsilon$ will denote respectively its Gauss map, its mean curvature and its conformal factor. 
We can check: 
\begin{equation}
\label{rescalingdelagaussmap}
\tilde \n^\varepsilon = \n^\varepsilon \left( \varepsilon . \right),
\end{equation}
\begin{equation}
\label{rescalingduHnablaPhi}
\tilde H^\varepsilon \nabla \tilde \Phi^\varepsilon = \varepsilon   H^\varepsilon \nabla \Phi^\varepsilon  \left( \varepsilon . \right).
\end{equation}
Then for all $\frac{1}{\varepsilon} >R >0$ 
\begin{equation}
\label{integraledunablantilde}
 \int_{\D_{\epsilon R} } \left| \nabla  \n^\varepsilon \right|^2 dz = \int_{\D_R} \left| \nabla \tilde \n^\varepsilon \right|^2 dz,
\end{equation}
and
\begin{equation}
\label{integraleduHnablaPhi}
 \int_{\D_{\varepsilon R} } \left| H^\epsilon \nabla \Phi^\varepsilon \right|^2 dz = \int_{\D_R} \left| \tilde H^\varepsilon \nabla \tilde \Phi^\varepsilon \right|^2 dz.
\end{equation}
Hypothesis \ref{hypothesis5} implies  
\begin{equation}
\label{convergenceenergiendelabulle}
 \lim_{\epsilon \rightarrow 0} \int_{\D_{\varepsilon R} } \left| \nabla \n^\varepsilon \right|^2 dz = \int_{ \D_{ R} } \left| \nabla \n_\Psi \right|^2 dz ,
\end{equation}
\begin{equation}
\label{convergenceenergiedelabulle}
 \lim_{\epsilon \rightarrow 0} \int_{\D_{\varepsilon R} } \left| H^\varepsilon \nabla \Phi^\varepsilon \right|^2 dz = \int_{ \D_{ R} } \left| H_\Psi \nabla \Psi\right|^2 dz = 0.
\end{equation}
Besides, combining (\ref{HnablaPhiborneparnablanannex}) and hypothesis \ref{hypothesis3} yields
  \begin{equation}
\label{lenoneckenergysurleHnablaPhiarticle}
\lim_{R \rightarrow \infty}  \left( \lim_{\varepsilon \rightarrow 0} \int_{\D_{\frac{1}{R}} \backslash \D_{\varepsilon R} } \left| H^\varepsilon \nabla \Phi^\varepsilon \right|^2 dz \right) \le \lim_{R \rightarrow \infty}  \left( \lim_{\varepsilon \rightarrow 0} \int_{\D_{\frac{1}{R}} \backslash \D_{\varepsilon R} } \left| \nabla \n^\varepsilon \right|^2 dz \right) = 0.
\end{equation}
Together (\ref{convergenceenergiedelabulle}) and (\ref{lenoneckenergysurleHnablaPhiarticle}) ensure that 
for $R$ sufficiently big and $\varepsilon$  sufficiently small \begin{equation} \label{l'hypotheseduhpetit} \left\| H^\varepsilon \nabla \Phi^\varepsilon \right\|_{L^2 \left( \D_{\frac{1}{R} } \right)} \le \varepsilon_0' \left( M \right), \end{equation} with $\varepsilon_0'(M)$ given by theorem \ref{epsilonregularitehnablaphifaibleLL2inftyi}. 
Up to a rescaling, and thus without loss of generality we can assume that (\ref{l'hypotheseduhpetit}) stands on $\D$.
We will find a uniform $L^{2,\infty}$ bound  on a  first Willmore quantity, 
 theorem \ref{epsilonregularitehnablaphifaibleLL2inftyi} then gives the uniform controls proving theorem \ref{convminibubbleintro}.

Recalling (\ref{integraledunablantilde}) yields 
$$ \lim_{\varepsilon \rightarrow 0} \int_{\D_{\varepsilon R} } \left| \nabla \n^\varepsilon \right|^2 dz = \int_{ \D_{ R} } \left| \nabla \n_\Psi \right|^2 dz .$$
Then either $\Psi$ parametrizes a plane, and classical $\varepsilon$-regularity results yield smooth convergence (and there is de facto no real bubbling) or
for $R$ big enough, $$ \lim_{\varepsilon \rightarrow 0} \int_{\D_{\varepsilon R} } \left| \nabla \n^\varepsilon \right|^2 dz  > \frac{8\pi}{3}.$$ 
Then $$ \inf \left\{ s \left| \int_{B_s(p)} \left| \, \nabla \n^\varepsilon \right|^2 = \frac{8 \pi}{6}, \: \forall p \in \D \text{ s.t. } B_s(p) \subset \D \right. \right\} \rightarrow 0.$$ This means that the estimates given by theorem \ref{epsilonreularitylintro} degenerates as $\varepsilon$ goes to $0$. Finding a uniform control on $\Lr e^{\lambda}$ will require a "bubble-neck" decomposition.
 The bubble region will be $\D_{4 \varepsilon R}$ while the neck region will be $\D_{\frac{1}{R} } \backslash \D_{\varepsilon R}$, with a $R$ that we determine in what follows.
We consider $\Lr^\varepsilon$ a first Willmore quantity of $\Phi^\varepsilon$ on $\D$.
\newline
\underline{ \bf{ Step 1: }} {\bf Neck estimates }\newline
By hypothesis \ref{hypothesis3}, there exists $R_0>0$ such that for $\varepsilon$ small enough,
$$\int_{\D_{\frac{1}{R_0}} \backslash \D_{\varepsilon R_0 } } \left| \nabla \n^\varepsilon \right| \le \varepsilon_0,$$ where $\varepsilon_0$ is given by  theorem \ref{ll2infinianneau}. In turn this ensures that
$$ \sup_{\varepsilon R_0 <s<\frac{1}{2R_0} } \int_{\D_{2s} \backslash \D_s } \left| \nabla \n^\varepsilon \right|^2 \le \varepsilon_0.$$
We can then apply theorem \ref{ll2infinianneau} and find a sequence $\vec{ \mathcal{L}_1^\varepsilon} \in \R^3$ such that 
\begin{equation}
\label{oncontrolell2infinisurlecouavantlabubulleminimale}
\left\| \left( \Lr^\varepsilon -\vec{ \mathcal{L}_1^\varepsilon} \right)e^{\lambda^\varepsilon} \right\|_{L^{2, \infty} \left( \D_{\frac{1}{2 R_0}} \backslash  \D_{2 \varepsilon R_0 } \right)} \le C \left\| H^\varepsilon \nabla \Phi^\varepsilon \right\|_{L^2 \left( \D \right)},
\end{equation}
where $C$ depends solely on $M$ defined in \ref{hypothesis1} and \ref{hypothesis2}.
\newline
\underline{ \bf{ Step 2: }} {\bf Bubble estimates }\newline
Let $p^\varepsilon = \varepsilon x^\varepsilon \in \D_{4 R_0 \varepsilon}$ and $r^\varepsilon = \varepsilon s^\varepsilon$ such that $B_{r^\varepsilon} (p^\varepsilon) \subset \D_{4 R_0 \varepsilon}$ and $$ \int_{B_{r^\varepsilon}(p^\varepsilon)} \left| \, \nabla \n^\varepsilon \right|^2= \frac{ 8 \pi}{6}.$$
 Then $x^{\varepsilon} \in \D_{4 R_0}$ and $s^\varepsilon \le 4 R_0$, meaning that there exists $x \in \overline{\D_{4 R_0}}$ and $s \le 4 R_0$ such that (up to a subsequence) 
$$\begin{aligned}
x^\epsilon &\rightarrow x,\\
s^\epsilon &\rightarrow s, \\
B_s \left( x \right)& \subset \D_{4R_0}.
\end{aligned}$$
Adapting slightly (\ref{integraledunablantilde}) we find 
$$ \lim_{\varepsilon \rightarrow 0} \int_{B_{r^\varepsilon} (p^\varepsilon)} \left| \nabla \n^\varepsilon \right|^2dz =\lim_{\varepsilon \rightarrow 0} \int_{B_{s^\varepsilon} (x^\varepsilon)} \left| \nabla \tilde \n^\varepsilon \right|^2dz  = \int _{B_s \left(x \right) } \left| \nabla \n_\Psi \right|^2dz = \frac{8\pi}{6}.$$
Necessarily $$\frac{s}{4R_0} \ge r_0^\Psi:=\frac{1}{4 R_0}  \inf \left\{ t \left| \int_{B_t(p)} \left| \, \nabla \n_\Psi \right|^2 = \frac{4 \pi}{3}, \: \forall p \in \D_{4R_0} \text{ s.t. } B_s(t) \subset \D_{4R_0} \right. \right\}>0.$$
Thus if we set $$r_0^\varepsilon:= \frac{1}{4\varepsilon R_0}  \inf \left\{ r \left| \int_{B_r(p)} \left| \, \nabla \n^\varepsilon \right|^2 = \frac{4 \pi}{3}, \: \forall p \in \D_{4\varepsilon R_0} \text{ s.t. } B_r(t) \subset \D_{4 \varepsilon R_0} \right. \right\},$$ we deduce that for $\varepsilon$ small enough $r_0^\epsilon$ is uniformly bounded from below:
\begin{equation}
\label{onborner0epsilonparenbas}
r_0^\epsilon \ge \frac{1}{10} r_0^\Psi.
\end{equation}
Inequality (\ref{onborner0epsilonparenbas}) translates the simple bubbling of $\Phi^\varepsilon$. While $\Phi^\varepsilon$ concentrates at $0$ at the scale $\varepsilon$, $\tilde \Phi^\varepsilon$ does not concentrate any further, everything happens at the same scale for $\tilde \Phi^\varepsilon$.
For instance, corollary \ref{controlconformalfactorsanspetit} ensures that the conformal factor satisfies a Harnack inequality. Namely we find $\Lambda^\varepsilon \in \R$ such that 
\begin{equation}
\label{leharnacksurlabubulle}
\forall x \in \D_{3 \varepsilon R_0} \quad \frac{e^{\Lambda^\varepsilon} }{ C} \le e^{\lambda^\varepsilon (x) } \le C e^{\Lambda^\varepsilon}.
\end{equation}  
Here $C$ depends on $M$ and $r_0^\Psi$. Theorem \ref{epsilonreularitylintro} then allows us to control the first Willmore quantity ; 
i.e. there exists $\vec{\mathcal{L}_2^\varepsilon} \in \R^3$ such that 
\begin{equation}
\label{lecontroledevecLsurlabubulleauneconstante}
\left\| \left( \Lr^\varepsilon -\vec{\mathcal{L}_2^\varepsilon} \right) e^{\lambda^\varepsilon} \right\|_{L^{2,\infty} \left( \D_{3 \varepsilon R_0} \right)} \le C(M, r_0^\Psi ) \left\| H^\varepsilon \nabla \Phi^\varepsilon \right\|_{L^2 \left( \D \right) }. 
\end{equation}
\newline
\underline{ \bf{ Step 3: }} {\bf Estimates across the concentration point}\newline
We first wish to estimate $\left| \vec{\mathcal{L}_1^\varepsilon} -  \vec{\mathcal{L}_2^\varepsilon} \right|$. Using  (\ref{oncontrolell2infinisurlecouavantlabubulleminimale}) and (\ref{lecontroledevecLsurlabubulleauneconstante}) we find
$$\begin{aligned}
\left\| \left( \vec{\mathcal{L}_1^\varepsilon} -  \vec{\mathcal{L}_2^\varepsilon} \right) e^{\lambda^\varepsilon} \right\|_{L^{2, \infty} \left( \D_{3 R_0 \varepsilon} \backslash \D_{2R_0 \varepsilon} \right)} &\le \left\| \left( \vec{\mathcal{L}_1^\varepsilon} -  \Lr^\varepsilon \right) e^{\lambda^\varepsilon}\right\|_{L^{2, \infty} \left( \D_{3 R_0 \varepsilon} \backslash \D_{2R_0 \varepsilon} \right)}  \\&
 + \left\| \left( \Lr^\varepsilon -  \vec{\mathcal{L}_2^\varepsilon} \right) e^{\lambda^\varepsilon} \right\|_{L^{2, \infty} \left( \D_{3 R_0 \varepsilon} \backslash \D_{2R_0 \varepsilon} \right)} \\
&\le \left\| \left( \vec{\mathcal{L}_1^\varepsilon} -  \Lr^\varepsilon \right) e^{\lambda^\varepsilon} \right\|_{L^{2, \infty} \left( \D_{\frac{1}{2R_0}} \backslash \D_{2R_0 \varepsilon} \right)} \\& + \left\| \left( \Lr^\varepsilon -  \vec{\mathcal{L}_2^\varepsilon} \right) e^{\lambda^\varepsilon} \right\|_{L^{2, \infty} \left( \D_{3 R_0 \varepsilon}  \right)}  \\
&\le C( M, r^\Psi_0 ) \left\| H^\epsilon \nabla \Phi^\epsilon \right\|_{L^2 \left( \D \right) }.
\end{aligned}$$

Thus 
\begin{equation}
\label{oncontroleladifferencedeslcestbietotofini}
\left| \vec{\mathcal{L}_1^\varepsilon} -  \vec{\mathcal{L}_2^\varepsilon} \right| \le \frac{ C( M, r^\Psi_0 )}{ \left\| e^{\lambda^\varepsilon} \right\|_{L^{2, \infty} \left( \D_{3 R_0 \varepsilon} \backslash \D_{2R_0 \varepsilon} \right)}} \left\| H^\epsilon \nabla \Phi^\varepsilon \right\|_{L^2 \left( \D \right) }.
\end{equation}
We can now assemble our estimates on the neck and the bubble. Using successively (\ref{oncontrolell2infinisurlecouavantlabubulleminimale}), (\ref{lecontroledevecLsurlabubulleauneconstante}) and (\ref{oncontroleladifferencedeslcestbietotofini}) we find
$$\begin{aligned}
\left\| \left( \Lr^\varepsilon - \vec{\mathcal{L}_1^\varepsilon } \right)e^{\lambda^\varepsilon} \right\|_{L^{2,\infty} \left( \D_{\frac{1}{2 R_0}} \right)} &\le  \left\| \left( \Lr^\varepsilon -\vec{ \mathcal{L}_1^\varepsilon} \right)e^{\lambda^\varepsilon} \right\|_{L^{2, \infty} \left( \D_{\frac{1}{2 R_0}} \backslash  \D_{2 \varepsilon R_0 } \right)}  \\&+ \left\| \left( \Lr^\varepsilon -\vec{ \mathcal{L}_1^\varepsilon} \right)e^{\lambda^\varepsilon} \right\|_{L^{2, \infty} \left( \D_{3 \varepsilon R_0 } \right)}  \\
&\le C( M) \left\| H^\varepsilon \nabla \Phi^\varepsilon \right\|_{L^2 \left( \D \right)} \\&+ \left\| \left( \Lr^\varepsilon -\vec{ \mathcal{L}_2^\varepsilon} \right)e^{\lambda^\varepsilon} \right\|_{L^{2, \infty} \left( \D_{3 \varepsilon R_0 } \right)} \\&+ \left\| \left( \vec{\mathcal{L}_2^\varepsilon} -\vec{ \mathcal{L}_1^\varepsilon} \right)e^{\lambda^\varepsilon} \right\|_{L^{2, \infty} \left( \D_{3 \varepsilon R_0 } \right)} \\
&\le  C( M, r^\Psi_0 ) \left\| H^\varepsilon \nabla \Phi^\varepsilon \right\|_{L^2 \left( \D \right)} + \left| \vec{\mathcal{L}_1^\varepsilon} -  \vec{\mathcal{L}_2^\varepsilon} \right|  \left\| e^{\lambda^\varepsilon} \right\|_{L^{2, \infty} \left( \D_{3 \varepsilon R_0 } \right)} \\
&\le C( M, r^\Psi_0 ) \left\| H^\varepsilon \nabla \Phi^\varepsilon \right\|_{L^2 \left( \D \right)}  \left( 1 + \frac{  \left\| e^{\lambda^\varepsilon} \right\|_{L^{2, \infty} \left( \D_{3 \varepsilon R_0 } \right)}  }{ \left\| e^{\lambda^\varepsilon} \right\|_{L^{2, \infty} \left( \D_{3 R_0 \varepsilon} \backslash \D_{2R_0 \varepsilon} \right)}}  \right).
\end{aligned}$$
With  (\ref{leharnacksurlabubulle}),  we can simplify the last right-hand term in the inequality.
$$ \begin{aligned}
\frac{  \left\| e^{\lambda^\varepsilon} \right\|_{L^{2, \infty} \left( \D_{3 \varepsilon R_0 } \right)}  }{ \left\| e^{\lambda^\varepsilon} \right\|_{L^{2, \infty} \left( \D_{3 R_0 \varepsilon} \backslash \D_{2R_0 \varepsilon} \right)}} &\le  C( M, r^\Psi_0 )  \frac{  \left\| e^{\Lambda^\varepsilon} \right\|_{L^{2, \infty} \left( \D_{3 \varepsilon R_0 } \right)}  }{ \left\| e^{\Lambda^\varepsilon} \right\|_{L^{2, \infty} \left( \D_{3 R_0 \varepsilon} \backslash \D_{2R_0 \varepsilon} \right)}} \\
&\le C( M, r^\Psi_0 )  \frac{  \left\| 1\right\|_{L^{2, \infty} \left( \D_{3 \varepsilon R_0 } \right)}  }{ \left\|1 \right\|_{L^{2, \infty} \left( \D_{3 R_0 \varepsilon} \backslash \D_{2R_0 \varepsilon} \right)}}  \\
&\le  C( M, r^\Psi_0 ) \end{aligned}$$
since $\Lambda^\varepsilon$ is a constant.
Accordingly,  there exists  $C( M, r^\Psi_0 ) >0$ such that the following estimate across the concentration point stands.
\begin{equation}
\label{ontrouvelebonL}
\left\| \left( \Lr^\varepsilon - \vec{\mathcal{L}_1^\varepsilon } \right)e^{\lambda^\varepsilon} \right\|_{L^{2,\infty} \left( \D_{\frac{1}{2 R_0}} \right)} \le   C( M, r^\Psi_0 ) \left\| H^\varepsilon \nabla \Phi^\varepsilon \right\|_{L^2 \left( \D \right)}.
\end{equation}
\newline
\underline{ \bf{ Step 4: }} {\bf Conclusion}\newline
We have then found a first Willmore quantity, $\Lr^\varepsilon - \vec{ \mathcal{L}_1^\varepsilon}$, with uniform $L^{2,\infty}$ control on a disk of fixed radius $\rho =\frac{1}{2R_0} $. Since (\ref{l'hypotheseduhpetit}) stands  we can apply theorem \ref{epsilonregularitehnablaphifaibleLL2inftyi} on $\D_\rho$ and find 
\begin{equation} \label{lestimeefinaleenH} \left\| H^\varepsilon \nabla \Phi^\varepsilon \right\|_{L^\infty \left( \D_{ \frac{ \rho}{2} } \right)} \le   C { \left\| H^\varepsilon \nabla \Phi^\varepsilon \right\|_{L^2 \left( \D_\rho \right)}},\end{equation}
\begin{equation} \label{lestimeefinaleenPhi} \left\| \nabla \Phi^\varepsilon \right\|_{W^{2,p} \left( \D_{ \frac{\rho}{2} } \right)} \le      C  \| \nabla \Phi^\varepsilon \|_{L^2 \left( \D_\rho \right) }, \end{equation}
while the second and third Willmore quantities satisfy
\begin{equation} \label{lestimeefinaleenSR}\| \nabla S^\varepsilon \|_{W^{1,p} \left( \D_{\frac{\rho}{2}} \right) } + \| \nabla \vec{R}^\varepsilon \|_{W^{1,p} \left( \D_{\frac{\rho}{2}} \right) }\le C   \| H^\varepsilon \nabla \Phi^\varepsilon  \|_{L^2 \left( \D_\rho \right)}\end{equation} for all $p<\infty$. 

Theorem \ref{convminibubbleintro} then follows from classical compactness results.
\qed

\renewcommand{\thesection}{\Alph{section}}
\setcounter{section}{0} 
\section{Appendix}
\subsection{Formulas for a conformal immersion}
\label{formulasconformalimmersion}
In this section, we show several formulas useful for the core of the article. Most are well known, but their proof is included for self-containedness.
Let $\Phi \,: \, \D \rightarrow \R^3$ be a conformal immersion, that is such that 
$$\left| \Phi_x \right|^2 - \left| \Phi_y \right|^2 = \left\langle \Phi_x ,  \Phi_y \right\rangle = 0.$$ 
Its Gauss map is defined  as $\n = \frac{ \Phi_x \times  \Phi_y}{\left|  \Phi_x \times  \Phi_y \right|}$ (with $\times$ the usual vectorial product in $\R^3$) and its conformal factor  as $\lambda = \log \left| \Phi_x \right| = \log \left| \Phi_y \right|$.
Its second fundamental form is then $$ A:= \left\langle \nabla^2 \Phi , \n \right\rangle =: \begin{pmatrix} e & f \\ f & g \end{pmatrix}.$$
One can check \begin{equation} \label{lenablanenannex} \nabla \n = - e^{-2 \lambda} A \nabla \Phi =- e^{-2 \lambda} \begin{pmatrix} e \Phi_x + f \Phi_y \\ f \Phi_x + g \Phi_y \end{pmatrix} \end{equation} and deduce immediately 
\begin{equation}
\label{lenablaperpnenannex}
\nabla^{\perp} \n = - e^{-2 \lambda} \begin{pmatrix} -f \Phi_x  -g \Phi_y \\ e \Phi_x + f \Phi_y \end{pmatrix} .
\end{equation}
Defining the mean curvature $$H = \frac{e+g}{2 e^{2 \lambda} }$$ and the tracefree second fundamental form $$\Ar =e^{-2\lambda} \begin{pmatrix} \frac{e-g}{2} & f \\ f & \frac{g-e}{2} \end{pmatrix}$$ one finds
\begin{equation}
\label{lenablalenablaperpenannexenHAr}
\begin{aligned}
\nabla \n &= -H \nabla \Phi - \Ar \nabla \Phi, \\
\nabla^\perp \n &= - H \nabla^\perp \Phi + \Ar \nabla^\perp \Phi.
\end{aligned}
\end{equation}
By definition  of $\n$ 
$$\begin{aligned}
\n \times \Phi_x &= \Phi_y, \\
\n \times \Phi_y &= - \Phi_x,
\end{aligned}$$
which implies
\begin{equation}
\label{ntimesnablanablaperpphi}
\begin{aligned}
\n \times \nabla \Phi &=  - \nabla^\perp \Phi, \\
\n \times \nabla^\perp \Phi &= \nabla \Phi.
\end{aligned}
\end{equation}
Combining (\ref{lenablalenablaperpenannexenHAr}) and (\ref{ntimesnablanablaperpphi}) yields 
\begin{equation}
\label{ntimesnablanableperpn}
\begin{aligned}
\n  \times \nabla \n &= H \nabla^\perp \Phi + \Ar \nabla^\perp \Phi, \\
\n \times \nabla^\perp \n &= - H \nabla \Phi + \Ar \nabla \Phi.
\end{aligned}
\end{equation}
As a result $H \nabla \Phi$ and $\Ar \nabla \Phi$ can be deduced solely from $\nabla \n$: 
\begin{equation}
\label{HnablaphietArnablaphienfonctionden}
\begin{aligned}
H \nabla \Phi &=- \frac{ \n \times \nabla^\perp \n  + \nabla \n }{2}, \\
\Ar \nabla \Phi &= \frac{ \n \times \nabla^\perp \n - \nabla \n }{2}.
\end{aligned}
\end{equation}
It is well known that, since $\Phi$ is conformal, 
\begin{equation}
\label{leDeltaPhinormal}
\Delta \Phi = \vec{H} \left| \nabla \Phi \right|^2,
\end{equation}
 where $\vec{H} = H \n$, 
and \begin{equation}\label{Liouvilleequation}\Delta \lambda = Ke^{2\lambda},\end{equation} where $K =e^{-4 \lambda} \det A= e^{-4 \lambda} \left( eg-f^2 \right) $ is the Gauss curvature. Equation (\ref{Liouvilleequation}) is refered to as the Liouville equation.

We can compute $\left| \nabla \n \right|^2$ in several ways.
Using (\ref{lenablanenannex}): 
\begin{equation}
\label{normenablan2enHetK} \begin{aligned}
\left| \nabla \n \right|^2 &= e^{-2 \lambda} \left( e^2 + g^2 + 2 f^2 \right) \\
&=  e^{-2 \lambda} \left( 4 \left(\frac{e+g}{2} \right)^2  - 2 eg+ 2 f^2 \right) \\
&= 2 \left( 2H^2  -  K \right) e^{2 \lambda},
\end{aligned}
\end{equation}
and with (\ref{lenablalenablaperpenannexenHAr})
\begin{equation}
\label{normenalan2enHetAr}
\begin{aligned}
\left| \nabla \n \right|^2 &= \left| H \nabla \Phi + \Ar \nabla \Phi \right|^2 \\
&=  \left|H \nabla \Phi \right|^2 + \left| \Ar \nabla \Phi \right|^2 \text{ since } \Ar \text{ is tracefree.} \\
\end{aligned}
\end{equation}
 since $\Ar$ is tracefree.
From (\ref{normenalan2enHetAr}) we deduce 
\begin{equation}
\label{Arborneparnablanannex}
\big|\Ar\big|\le \left|\nabla \n \right|, 
\end{equation}
and
\begin{equation}
\label{HnablaPhiborneparnablanannex}
\left| H \nabla \Phi \right| \le \left| \nabla \n \right|.
\end{equation}

Complex notations will prove convenient for this work. Let $$\partial_z = \frac{1}{2} \left( \partial_x - i \partial_y \right) = \frac{1}{2}\begin{pmatrix} 1 \\ -i \end{pmatrix} . \nabla =  \frac{i}{2}\begin{pmatrix} 1 \\ -i \end{pmatrix} . \nabla^\perp.$$
Then $\Phi$ conformal translates as 
\begin{equation}
\label{conformalcondition}
\begin{aligned}
&\left\langle \Phi_z, \Phi_z \right\rangle = 0, \\
& \left| \Phi_z \right|^2 = \frac{e^{2\lambda} }{2}.
\end{aligned}
\end{equation}
If we define the tracefree curvature as $\omega = \frac{e-g}{2} -if =2 \left\langle \Phi_{zz}, \n \right\rangle$,  (\ref{lenablalenablaperpenannexenHAr}) becomes 
\begin{equation}
\label{lenablalenablaperpenannexenHomegacomplexe}
\begin{aligned}
\n_z &= -H \Phi_z - \omega e^{-2\lambda}  \Phi_{\zb},
\end{aligned}
\end{equation}
while (\ref{normenalan2enHetAr}) turns into
\begin{equation}
\label{nzencomplexesenhetomega}
\left| \n_z \right|^2 = \frac{H^2 e^{2\lambda} + \left|\omega \right|^2 e^{-2\lambda} }{2}.
\end{equation}
Similarly, (\ref{leDeltaPhinormal}) translates to
$$ \Phi_{z\zb} = \frac{He^{2\lambda} }{2}\n.$$

Exploiting (\ref{conformalcondition}) one finds 
$$
\begin{aligned}
\left\langle \Phi_{zz}, \Phi_z \right\rangle &=0 \\
\left\langle \Phi_{zz}, \Phi_{\zb} \right\rangle&=\left(  \left\langle \Phi_z , \Phi_{\zb} \right\rangle \right)_z - \left\langle \Phi_z, \Phi_{z \zb} \right\rangle \\
&= \lambda_z e^{2 \lambda}.
\end{aligned}$$
Subsequently 
$$ \Phi_{zz} = 2 \lambda_z \Phi_z +\frac{ \omega }{2}\n.$$
We can then compute
$$ \begin{aligned}
\n_{z \zb} &= - H_{\zb} \Phi_z -\frac{H^2 e^{2\lambda} }{2} \n - \left( \omega_{\zb} e^{-2\lambda} - 2 \lambda_{\zb} \omega e^{-2 \lambda} \right) \Phi_{\zb} - 2 \lambda_{\zb} \left( \omega e^{-2 \lambda} \right) \Phi_{\zb} - \frac{ \left| \omega \right|^2 e^{-2 \lambda} }{2} \n \\
&=- H_{\zb} \Phi_z - \omega_{\zb} e^{-2\lambda} \Phi_{\zb}  - \frac{ H^2 e^{2\lambda} + \left| \omega \right|^2 e^{-2\lambda} }{2} \n .
\end{aligned}$$
However $\n_{z \zb} \in \R^3$
since $\n \in \R^3$. Then necessarily $\omega_{\zb} e^{-2\lambda} = \overline{ H_{\zb} }$ i.e.
\begin{equation}
\label{Gausscodazziencomplexes}
H_z = \omega_{\zb} e^{-2\lambda}.
\end{equation}
Equation (\ref{Gausscodazziencomplexes}) is the Gauss-Codazzi equation in complex notations.

Using (\ref{Gausscodazziencomplexes}) and (\ref{nzencomplexesenhetomega})  we find 
\begin{equation}
\label{equationnzzbannexarticle}
\n_{z \zb} + \left| \n_z \right|^2 \n + 2 \Re \left( H_z \Phi_{\zb} \right)=0.
\end{equation}
While the complex notations are most convenient for computations, the resulting equations are not always telling. We will then translate (\ref{equationnzzbannexarticle}) back to its classic real form: 
$$ \begin{aligned}
\n_{z \zb} + \left| \n_z \right|^2 \n + 2 \Re \left( H_z \Phi_{\zb} \right) = \frac{1}{4} \left( \Delta \n + \left| \nabla \n \right|^2  \n +2 \left( H_x \Phi_x + H_y \Phi_y \right) \right).
\end{aligned}$$
The Gauss map $\n$ then satisfies 
\begin{equation}
\label{equationDeltanannexarticle1}
\Delta \n + \left| \nabla \n \right|^2  \n +2 \nabla H \nabla \Phi=0.
\end{equation}

This can be slightly changed to better suit our needs 
$$ \begin{aligned}
\Delta \n + \left| \nabla \n \right|^2  \n +2 \nabla H \nabla \Phi &=\Delta \n + \left| \nabla \n \right|^2  \n + 2 \mathrm{div}\left( H \nabla \Phi \right) - 2H \Delta \Phi   \\
&= \Delta \n + \left(  \left| \nabla \n \right|^2 - 2 \left| H \nabla \Phi \right|^2 \right)  \n + 2 \mathrm{div}\left( H \nabla \Phi \right)  \\
&= \Delta \n + \left(  \left| \Ar \nabla \Phi \right|^2 -  \left| H \nabla \Phi \right|^2 \right)  \n + 2 \mathrm{div}\left( H \nabla \Phi \right). 
\end{aligned}$$
The second equality is obtained with (\ref{leDeltaPhinormal}), and the third  with (\ref{normenalan2enHetAr}).
Now we compute 
$$\begin{aligned}
\nabla \n \times \nabla^\perp \n &=- \n_x \times \n_y + \n_y \times \n_x =-2 \n_x \times \n_y \\
&= - 2 e^{-4\lambda} \left( e \Phi_x + f \Phi_y \right) \times \left( f \Phi_x + g \Phi_y \right) \\
&= -2 e^{-2\lambda} \left( eg \n -f^2 \n \right) \\
&= - 2 e^{-2 \lambda} \left( \left( \frac{e+g}{2} \right)^2 - \left( \frac{e-g}{2} \right)^2 - f^2 \right) \n \\
&= -2 H^2 e^{2\lambda} \n  +  2 \left( \left( \frac{e-g}{2} \right)^2 + f^2 \right)e^{-2\lambda} \n \\
&= - \left| H \nabla \Phi \right|^2 \n + \left| \Ar \nabla \Phi \right|^2\n.
\end{aligned}$$
We then find 
\begin{equation}
\label{equationDeltanannexarticlemieux!}
\Delta \n + \nabla^\perp \n \times \nabla \n  +2 \mathrm{div}\left( H \nabla \Phi \right) = 0.
\end{equation}

\subsection{Formulas for a conformal, Willmore immersion }
\label{formulasforaconformalWillmoreimmersion}
The aim of this section is to study the Willmore quantities and offer a proof of theorem \ref{theoRSPhi}. To that aim, we set ourselves in the same context as in the previous subsection with the additionnal assumption that $\Phi$ is Willmore.

We recall the definition of the Willmore quantities (already introduced in section \ref{subsectionWillmorequantities} and stemming from theorem I.4 in \cite{bibanalysisaspects}).
\begin{equation}
\label{willmorequantitiesdefinitionannex}
\begin{aligned}
 \nabla^\perp \Lr &= \nabla \vec{H} - 3 \pi_{\n} \left( \nabla \vec{H} \right) + \nabla^\perp \n \times \vec{H}, \\
\nabla^\perp S &= \left\langle \Lr, \nabla^\perp \Phi \right\rangle, \\
\nabla^\perp \vec{R} &= \Lr \times \nabla^\perp \Phi + 2H \nabla^\perp \Phi.
\end{aligned}
\end{equation}
Now since $\left\langle \Lr \times \nabla^\perp \Phi , H\nabla^\perp \Phi \right\rangle = 0$ owing to the properties of the vectorial product, we can compute 
$$\left| \nabla \vec{R} \right|^2 = \left| \Lr \times \nabla^\perp \Phi  \right|^2 + 4 \left| H \nabla \Phi \right|^2.$$ 
This yields an interesting estimate: 
\begin{equation}
\label{majorerhnablaphiparnablaR}
\left| H \nabla \Phi \right| \le \frac{1}{2} \left| \nabla \vec{R} \right|. 
\end{equation}
We recall the system of \ref{theoremequationRS}
$$
\left\{
\begin{aligned}
\Delta S &= - \left\langle \nabla \n, \nabla^\perp \vec{R} \right\rangle \\
\Delta \vec{R} &= \nabla \n \times \nabla^\perp \vec{R} + \nabla^\perp S \nabla \n\\
\Delta \Phi &= \frac{1}{2} \left( \nabla^\perp S. \nabla \Phi + \nabla^\perp \vec{R} \times \nabla \Phi \right).
\end{aligned}
\right.
$$

To rephrase this system we compute
\begin{equation} \label{ArnablaPhiscalarnablaperpR} \begin{aligned}
\left\langle \Ar \nabla \Phi , \nabla^\perp \vec{R} \right\rangle &= \left\langle \Ar \nabla \Phi , \Lr \times \nabla^\perp \Phi + 2 H \nabla^\perp \Phi \right\rangle \\
&=-e^{-2\lambda} \left\langle \frac{e-g}{2} \Phi_x + f \Phi_y , \Lr \times \Phi_y +2 H \Phi_y \right\rangle \\&+ e^{-2 \lambda} \left\langle f \Phi_x + \frac{g-e}{2} \Phi_y , \Lr \times \Phi_x + 2H \Phi_x \right\rangle \\
&=\frac{g-e}{2} e^{-2\lambda} \left( \left\langle \Phi_x, \Lr \times \Phi_y \right\rangle + \left\langle \Phi_y, \Lr \times \Phi_x \right\rangle \right)
-2H f  + 2 Hf \\
&= \frac{g-e}{2} e^{-2\lambda} \left( \left\langle \Lr , \Phi_y \times \Phi_x \right\rangle + \left\langle \Lr, \Phi_x \times \Phi_y \right\rangle \right) \\
&=0.
\end{aligned} \end{equation}
Further 
\begin{equation} \label{ArnablaPhitimesnablaperpR} \begin{aligned}
\Ar\nabla\Phi \times \nabla^\perp \vec{R} &= \Ar \nabla \Phi \times \left( \Lr \times \nabla^\perp \Phi + 2 H \nabla^\perp \Phi \right) \\
&= \left\langle \Ar \nabla \Phi . \nabla^\perp \Phi \right\rangle \Lr - \left\langle \Ar \nabla \Phi, \Lr \right\rangle \nabla^\perp \Phi +2H \Ar \nabla\Phi \times \nabla^\perp \Phi \\
&=  -e^{-2\lambda} \left\langle \frac{e-g}{2} \Phi_x+f\Phi_y, \Phi_y \right\rangle \Lr \\ &+ e^{-2\lambda} \left\langle \frac{e-g}{2} \Phi_x+f\Phi_y, \Lr \right\rangle \Phi_y \\& -e^{-2\lambda} 2H \left( \frac{e-g}{2} \Phi_x +f\Phi_y \right) \times \Phi_y  \\&+ 
e^{-2\lambda} \ \left\langle f\Phi_x + \frac{g-e}{2} \Phi_y, \Phi_x \right\rangle \Lr \\&  - e^{-2\lambda} \left\langle f\Phi_x + \frac{g-e}{2} \Phi_y, \Lr \right\rangle \Phi_x  \\&+2H \left( f\Phi_x + \frac{g-e}{2} \Phi_y \right) \times \Phi_x  \\
&=  \frac{e-g}{2} \left\langle \Phi_x , \Lr \right\rangle \Phi_y + f \left\langle \Phi_y , \Lr \right\rangle \Phi_y \\&- f \left\langle \Phi_x , \Lr \right\rangle \Phi_x -\frac{g-e}{2} \left\langle \Phi_y, \Lr \right\rangle \Phi_x \\ & +2H \left( -\frac{e-g}{2} \n  + \frac{g-e}{2} (-\n) \right)  \\
&=\left( \frac{e-g}{2} S_x \Phi_y + f S_y \Phi_y - f S_x \Phi_x - \frac{g-e}{2}S_y \Phi_x \right), \\
&=- \nabla^\perp S \Ar \nabla \Phi.
\end{aligned} \end{equation}
 We have used (\ref{willmorequantitiesdefinitionannex}) to obtain the second to last equality.
The decomposition (\ref{lenablalenablaperpenannexenHAr}) then yields
$$\left\langle \nabla \n, \nabla^\perp \vec{R} \right\rangle = -\left\langle  H \nabla \Phi + \Ar \nabla\Phi, \nabla^\perp \vec{R} \right\rangle = - \left\langle H \nabla \Phi , \nabla^\perp \vec{R} \right\rangle,$$
with (\ref{ArnablaPhiscalarnablaperpR}). Similarly with (\ref{ArnablaPhitimesnablaperpR}) we compute
$$
\begin{aligned}
\nabla \n \times \nabla^\perp \vec{R} + \nabla^\perp  S \nabla \n &= - H\nabla \Phi \times \nabla^\perp \vec{R} - H \nabla \Phi \nabla^\perp S - \Ar \nabla \Phi \nabla^\perp \vec{R} - \Ar \nabla \Phi \nabla^\perp S \\
&=  - H\nabla \Phi \times \nabla^\perp \vec{R} - H \nabla \Phi \nabla^\perp S + \nabla^\perp S \Ar\nabla \Phi - \nabla^\perp S \Ar \nabla \Phi \\
&= - H\nabla \Phi \times \nabla^\perp \vec{R} - H \nabla \Phi \nabla^\perp S .
\end{aligned}$$
Injecting these last two equalities in (\ref{systemenRSPhiannex}), we can conclude that $\vec{R}$, $S$ and $\Phi$ satisfy: 
$$
\left\{
\begin{aligned}
\Delta S &=  \left\langle H \nabla \Phi, \nabla^\perp \vec{R} \right\rangle \\
\Delta \vec{R} &= - H \nabla \Phi \times \nabla^\perp \vec{R} - \nabla^\perp S H \nabla \Phi\\
\Delta \Phi &= \frac{1}{2} \left( \nabla^\perp S. \nabla \Phi + \nabla^\perp \vec{R} \times \nabla \Phi \right),
\end{aligned}
\right.
$$
which is the desired equation.\qed

\subsection{Low-regularity estimates}
We first recall theorem 3 of \cite{MR1949165}.

\begin{theo}
\label{231120201048}
Let $f \in L^p \left( \D \right)$ such that $\int f dxdy= 0$, $1<p< \infty$. Then there exists some $Y \in L^\infty \left( \D \right) \cap W_0^{1,p}\left( \D \right)$ such that 
$$\mathrm{div} Y = f,$$
and:
$$\left\| Y \right\|_{L^\infty \left( \D \right) } + \left\| \nabla Y \right\|_{W^{1,p}(\D )} \le C \left\| f \right\|_{L^p \left( \D \right) }.$$
\end{theo}

\begin{theo}
\label{theoremepourbornerL}
Let $V \in \mathcal{D}' \left( \R^3 \right)$ such that $ \nabla V =   \nabla^\perp A + B$ with $A  \in L^{2} \left(\D\right) $ and $B  \in L^1 \left( \D, \R^2 \right)$.
Then for any $r<1$ there exists $c \in \R$ a constant and $C(r)>0$  such that 
$$ \left\| V-c \right\|_{L^{2} \left( \D_{r} \right) } \le C(r) \left( \| A \|_{L^2 \left( \D \right) }+ \| B \|_{L^1 \left( \D \right)} \right).$$
\end{theo}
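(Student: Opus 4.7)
The strategy is to decompose $V$ into pieces controlled separately by the $H^{-1}$ norm of $A$ and the $L^{1}$ norm of $B$, plus a harmonic residue handled via $L^{2,\infty}$–$L^{2,1}$ duality.

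First I would take the distributional divergence of $\nabla V = A + B$ to get $\Delta V = \mathrm{div}\,A + \mathrm{div}\,B$, and solve the two corresponding Dirichlet problems on $\D$: let $V_A$ solve $\Delta V_A = \mathrm{div}\,A$ with $V_A|_{\partial\D}=0$, and $V_B$ solve $\Delta V_B = \mathrm{div}\,B$ with $V_B|_{\partial\D}=0$. Theorem \ref{theoremehmoins1l2infini} gives $\|V_A\|_{L^{2,\infty}(\D)} \le C\|A\|_{H^{-1}(\D)}$ (in fact even an $L^2$ estimate), and Theorem \ref{theoremel1l2infini} gives $\|V_B\|_{L^{2,\infty}(\D)} \le C\|B\|_{L^1(\D)}$. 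The residue $h := V - V_A - V_B$ satisfies $\Delta h = 0$ distributionally, hence is smooth and harmonic in $\D$ by elliptic regularity.

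Next I would bound $h$ in $L^{2,\infty}(\D_r)$ modulo an appropriate constant $c$ by dualizing against $L^{2,1}(\D_r)$. Fix an intermediate radius $r < r' < 1$ and a bump function $\eta \in C_c^\infty(\D_{r'})$ with $\int\eta = 1$; pick $c$ so that $\int(h-c)\eta = 0$. For every $\phi \in C_c^\infty(\D_r)$, the function $\phi - (\int\phi)\,\eta$ has zero mean on $\D_{r'}$, so Bogovskii's operator (adapted to Lorentz spaces) yields $X \in W^{1,(2,1)}_0(\D_{r'})$ with $\mathrm{div}\,X = \phi - (\int\phi)\,\eta$ and $\|X\|_{W^{1,(2,1)}(\D_{r'})} \le C\|\phi\|_{L^{2,1}(\D_r)}$. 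Integration by parts on $\int(h-c)\,\mathrm{div}\,X$, combined with $\nabla h = (A+B) - \nabla V_A - \nabla V_B$, produces
$$\int (h-c)\,\phi \;=\; -\int A\cdot X \;-\; \int B \cdot X \;+\; \int V_A\,\mathrm{div}\,X \;+\; \int V_B\,\mathrm{div}\,X.$$
Each pairing is bounded separately: $|\int A\cdot X| \le \|A\|_{H^{-1}}\|X\|_{H^1_0}$ and $|\int B\cdot X| \le \|B\|_{L^1}\|X\|_{L^\infty}$, while the $V_A$ and $V_B$ terms are controlled by $L^{2,\infty}$–$L^{2,1}$ Hölder against $\mathrm{div}\,X$ together with Step~1. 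The critical Sobolev–Lorentz embedding $W^{1,(2,1)}(\R^2) \hookrightarrow L^\infty$ provides $\|X\|_{L^\infty} \le C\|\phi\|_{L^{2,1}}$, so that overall $|\int(h-c)\,\phi| \le C\bigl(\|A\|_{H^{-1}} + \|B\|_{L^1}\bigr)\|\phi\|_{L^{2,1}}$. Duality between $L^{2,\infty}$ and $L^{2,1}$ gives $\|h-c\|_{L^{2,\infty}(\D_r)} \le C(\|A\|_{H^{-1}} + \|B\|_{L^1})$, and the triangle inequality with Step~1 closes the proof.

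The main obstacle is the second step: a priori we have no pointwise or integral control on the harmonic residue $h$ itself, only on its gradient as a distribution in the combined space $H^{-1}+L^1$. Turning this into a sharp $L^{2,\infty}$ bound requires combining the Bogovskii operator on Lorentz spaces with the critical $2$D embedding $W^{1,(2,1)} \hookrightarrow L^\infty$, and carefully tracking the normalization constant $c$ through the duality pairing so that only the controlled norms of $A$ and $B$ appear in the final estimate.
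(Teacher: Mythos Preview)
Your decomposition into $V_A$, $V_B$, and the harmonic residue $h$ is exactly the paper's starting point, and the estimates on $V_A$ and $V_B$ are identical. The divergence is in how you control $h$. The paper expands $h_z$ in a power series $\sum h_p z^p$, tests $\nabla h$ against the explicit family $\phi_p = r\,\eta(r)\,e^{-ip\theta}$ to extract the coefficients, shows $|h_p|$ grows at most quadratically in $p$, and then sums the resulting bounds on $\D_r$ by hand to get the $L^{2,\infty}$ estimate. Your duality argument via Bogovskii and the critical embedding $W^{1,(2,1)}(\R^2)\hookrightarrow L^\infty$ is a genuinely different and arguably cleaner route: it avoids any explicit Fourier computation and would transplant to domains other than the disk. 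The price is that you import two nontrivial facts (Bogovskii bounded $L^{2,1}\to W^{1,(2,1)}_0$ via real interpolation, and the endpoint Lorentz--Sobolev embedding), whereas the paper's computation is elementary once the series is written down. Both approaches exploit that $h$ is only controlled through $\nabla h\in H^{-1}+L^1$; the paper does so by choosing test functions whose $H^1$ and $L^\infty$ norms are explicit, you do so by manufacturing the test function $X$ abstractly through Bogovskii.
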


\begin{proof}
Let us first assume that  $V$, $A$ and $B$ are smooth, and obtain an a priori estimate.
 Then, for any $U  = \begin{pmatrix} U_1 \\ U_2 \end{pmatrix} \in W^{1,2}_0 (\D, \R^2)$, one has:
\begin{equation} \label{231120201051}\begin{aligned}
\left|\int_\D \nabla V. U dx dy \right| & \le \left| \int_\D \nabla^\perp A . U dxdy \right| + \left| \int_\D B . U dxdy \right| \\
&\le  \left| \int_\D A . \mathrm{div} \left( \begin{pmatrix} U_2 \\ -U_1 \end{pmatrix} \right) dxdy \right|+  \left| \int_\D B . U dxdy \right|  \\
&\le \left( \left\| A \right\|_{L^2 \left( \D \right)} + \left\| B \right\|_{L^1 \left( \D \right)} \right) \left( \left\| U \right\|_{L^\infty \left( \D \right)} + \left\| \nabla U \right\|_{L^2 \left( \D \right) } \right).
\end{aligned} \end{equation}
Applying theorem \ref{231120201048} with $f= V - \bar{V}$, with $\bar{V}= \frac{1}{\left| \D \right|} \int_\D Vdxdy$, we can find $U \in L^\infty \left( \D \right) \cap W^{1,2}_0 \left( \D , \R^2 \right)$ such that:
\begin{equation}
\label{eqdivU}
\mathrm{div} \left(\mathrm{U} \right)= V -\bar{V},
\end{equation}
and 
\begin{equation} \label{231120201055} \left\| U \right\|_{L^\infty \left( \D \right) } + \left\| \nabla U \right\|_{W^{1,p}(\D )} \le C \left\| V -\bar{V} \right\|_{L^2 \left( \D \right) }.\end{equation}
Then, integrating by parts yields: 
\begin{equation}
\label{231120201104}
\begin{aligned}
\left| \int_\D  \left( V -\bar{V} \right)^2 dxdy \right| &\le \left| \int_\D \left( V -\bar{V} \right) \mathrm{div} U dxdy \right| \\
&\le  \left| \int_\D \nabla \left( V -\bar{V} \right) U dxdy \right| \le \left| \int_\D \nabla  V U dxdy \right|
\\
&\le  \left( \left\| A \right\|_{L^2 \left( \D \right)} + \left\| B \right\|_{L^1 \left( \D \right)} \right) \left( \left\| U \right\|_{L^\infty \left( \D \right)} + \left\| \nabla U \right\|_{L^2 \left( \D \right) } \right) \\
&\le C \left( \left\| A \right\|_{L^2 \left( \D \right)} + \left\| B \right\|_{L^1 \left( \D \right)} \right) \left\| V -\bar{V} \right\|_{L^2 \left( \D \right) },
\end{aligned}
\end{equation}
injecting first \eqref{eqdivU}, then \eqref{231120201051}  and \eqref{231120201055}. From \eqref{231120201104}, one then deduces:

\begin{equation} 
\label{231120201105}
\left\| V- \bar{V} \right\|_{L^2 \left( \D \right)} \le C \left( \left\| A \right\|_{L^2 \left( \D \right)} + \left\| B \right\|_{L^1 \left( \D \right)} \right) .
\end{equation}
A rescaling yields \eqref{231120201105} on any $\D_r$.

In the general case, we approximate $A$, $B$ and $V$ by smooth functions on a smaller disk $\D_r$. On this smaller disk, we apply and converge in the rescaled a priori estimate \eqref{231120201105} to obtain the result.
\end{proof}

We conclude this subsection by recalling an extension of Calderon-Zygmund  with Lorentz spaces (theorem 3.3.6 in \cite{bibharmmaps}).
\begin{theo}
\label{calderonzygmundl1l2infini}
Let $\Omega$ be an open subset of $\R^2$ with $C^1$ boundary. Let $f \in L^1 \left( \Omega \right)$ and $\varphi$ solution of
$$\left\{ \begin{aligned} \Delta \varphi &= f \text{ in } \Omega \\
\varphi &= 0 \text{ on } \partial \Omega.\end{aligned} \right.$$
then there exists a constant $C\left( \Omega \right)$ such that 
$$\left\| \varphi \right\|_{L^{2,\infty} \left( \Omega \right)} \le C \left( \Omega \right) \left\| f \right\|_{L^1 \left( \Omega \right) }.$$
\end{theo}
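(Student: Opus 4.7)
The plan is to represent $\varphi$ via the Green function of $\Omega$ and then establish a Young-type convolution estimate in Lorentz spaces. Since $\Omega \subset \R^2$ is bounded with $C^1$ boundary, standard elliptic theory provides a Green function $G_\Omega$ with $\varphi(x) = \int_\Omega G_\Omega(x,y) f(y)\,dy$ and a decomposition $G_\Omega(x,y) = \frac{1}{2\pi} \log|x-y| + h_\Omega(x,y)$, where the regular part $h_\Omega$ is continuous and uniformly bounded on $\overline{\Omega} \times \overline{\Omega}$. In particular $|\varphi(x)| \le (K \ast |f|)(x)$, where $f$ is extended by zero outside $\Omega$ and $K(z) := C_\Omega \bigl(1 + |\log|z||\bigr) \mathbf{1}_{|z|\le D}$, with $D$ the diameter of $\Omega$.

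First I would verify $K \in L^{2,\infty}(\R^2)$, which follows immediately since the decreasing rearrangement $K^*(s)$ grows only logarithmically in $s$, so $\sup_s s^{1/2} K^*(s) < \infty$ (in fact $K$ lies in every $L^p$ with $p < \infty$). The main task is then the convolution estimate
\[ \|K \ast |f|\|_{L^{2,\infty}(\R^2)} \le C(\Omega)\|f\|_{L^1}. \]
The cleanest route is to invoke O'Neil's convolution inequality for Lorentz spaces. A self-contained argument via the distribution function is also short: for fixed $t > 0$, split $K = K_1 + K_2$ at a radius $\rho$ to be chosen, note that $\|K_2\|_\infty \lesssim |\log \rho|$, and select $\rho = e^{-t/(2C\|f\|_{L^1})}$ so that $K_2 \ast |f| \le t/2$ pointwise. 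The remainder is controlled by Chebyshev,
\[ |\{K_1 \ast |f| > t/2\}| \le \frac{2\|K_1\|_{L^1}\|f\|_{L^1}}{t} \lesssim \frac{\rho^2 |\log \rho|\|f\|_{L^1}}{t}, \]
and substituting the chosen $\rho$ gives exponential decay of $|\{|\varphi|>t\}|$ in $t/\|f\|_{L^1}$. Together with the trivial measure bound $|\{|\varphi|>t\}| \le |\Omega|$ for small $t$, this yields $t^2 |\{|\varphi|>t\}| \le C(\Omega) \|f\|_{L^1}^2$ for every $t > 0$, which is precisely the stated $L^{2,\infty}$ estimate.

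The main obstacle is essentially bookkeeping: ensuring the constant depends only on $\Omega$, through its diameter and the $L^\infty$ bound on the regular part $h_\Omega$, and matching the exponential and trivial regimes so as to get a single clean bound valid for all $t$. The $C^1$ regularity of $\partial \Omega$ enters only to guarantee the existence of a Green function with uniformly bounded regular part; rougher boundaries could be accommodated at the cost of working instead with parametrix estimates for $-\Delta$ on $\Omega$, but no such effort is needed here.
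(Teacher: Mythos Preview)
The paper does not prove this theorem; it is simply quoted as theorem 3.3.6 of H\'elein \cite{bibharmmaps}. Your Green-function approach is a valid self-contained proof, modulo one correction. The regular part $h_\Omega$ is \emph{not} uniformly bounded on $\overline{\Omega}\times\overline{\Omega}$: already on the unit disk one has $h_{\D}(x,y)=-\frac{1}{2\pi}\log|1-\bar{x}\,y|$, which blows up as $x$ and $y$ approach the same boundary point. What you actually need, and what suffices for your kernel bound, is the direct estimate $|G_\Omega(x,y)|\le \frac{1}{2\pi}\log\bigl(D/|x-y|\bigr)$. This follows from the maximum principle alone: $G_\Omega\le 0$ throughout $\Omega$, while $G_\Omega(x,\cdot)-\frac{1}{2\pi}\log|x-\cdot|$ is harmonic in $\Omega$ with boundary values $-\frac{1}{2\pi}\log|x-\cdot|\ge -\frac{1}{2\pi}\log D$. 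With this in hand your majorant $K(z)=C_\Omega\log(D/|z|)\,\mathbf{1}_{|z|\le D}$ and the level-set splitting go through unchanged; in fact no boundary regularity is needed for this part, and your argument actually recovers the Brezis--Merle exponential integrability.

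As an aside, note that where the paper invokes this theorem, at (\ref{controlesigmal2infini}), it is used to bound $\nabla\sigma$ rather than $\sigma$ in $L^{2,\infty}$; the intended statement is presumably the stronger gradient estimate $\|\nabla\varphi\|_{L^{2,\infty}}\le C\|f\|_{L^1}$, which is what H\'elein's theorem 3.3.6 in fact asserts. That version follows by the same method, replacing $K$ by the kernel $|\nabla_x G_\Omega(x,y)|\le C|x-y|^{-1}$, which genuinely lies only in $L^{2,\infty}(\R^2)$.
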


\subsection{Wentes' lemmas}
\label{subsectionwenteslemmas}
Following are a few variations on Wente's inequality, which will prove useful in the core of the article.

\begin{theo}[Wente's inequality, originally in \cite{MR0268743}, see also 3.1.2 in \cite{bibharmmaps}]
\label{wenteclassique}
Let $a$,$b \in W^{1,2} \left( \D, \R \right)$ and $u$ a solution of 
$$\left\{ \begin{aligned} &\Delta u = \nabla a. \nabla^\perp b \text{ in } \D \\
&u = 0 \text{ on } \partial \D.
\end{aligned}
\right.$$
Then $u \in C^0 \left( \D, \R \right) \cap W^{1,2} \left( \D, \R \right)$ and there exists $C>0$
$$ \left\| u \right\|_{L^\infty \left( \D \right) } + \left\| \nabla u \right\|_{L^2 \left( \D \right) } \le C  \left\| \nabla a \right\|_{L^2 \left( \D \right)} \left\| \nabla b \right\|_{L^2 \left( \D \right)}.$$ 
\end{theo}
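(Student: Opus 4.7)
The plan is to work first under the additional assumption $a,b \in C^\infty(\overline{\D})$; the general case then follows by density of $C^\infty(\overline{\D})$ in $W^{1,2}(\D)$ combined with the \emph{a priori} bilinear estimate applied to differences of smooth approximations. The heart of the matter is the Jacobian structure of the source: using $\mathrm{div}(\nabla^\perp b)=0$ we rewrite
$$\nabla a \cdot \nabla^\perp b \;=\; \mathrm{div}(a\,\nabla^\perp b) \;=\; -\mathrm{div}(b\,\nabla^\perp a),$$
which is a compensated-compactness quantity in the sense of Coifman--Lions--Meyer--Semmes, lying in the real Hardy space $\mathcal{H}^1$ rather than merely in $L^1$. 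Without this cancellation, the product $\nabla a \cdot \nabla^\perp b$ would only be $L^1$, which is the critical exponent for $\Delta$ in dimension two and fails to yield a continuous solution.

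The main obstacle is the pointwise estimate
$$\|u\|_{L^\infty(\D)} \;\le\; C\,\|\nabla a\|_{L^2(\D)}\,\|\nabla b\|_{L^2(\D)}.$$
I would obtain it through the Green representation on the disk: if $G$ denotes the Dirichlet Green function of $-\Delta$ on $\D$, then integration by parts (the boundary term vanishes since $G(x,\cdot)\big|_{\partial\D}=0$) gives
$$u(x) \;=\; -\int_\D \nabla_y G(x,y) \cdot \bigl(a(y)\,\nabla^\perp b(y)\bigr)\,dy.$$
A naive Cauchy--Schwarz estimate fails because $|\nabla_y G(x,\cdot)|$ just barely misses $L^2$, so the Jacobian cancellation must be exploited in a finer way. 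A modern approach is to extend $a,b$ to all of $\R^2$ preserving the $W^{1,2}$ norm, invoke CLMS to place $\nabla a \cdot \nabla^\perp b$ in $\mathcal{H}^1(\R^2)$ with norm controlled by $\|\nabla a\|_2\|\nabla b\|_2$, and then deduce the uniform bound from $\mathcal{H}^1$--$\mathrm{BMO}$ duality together with the BMO character of $\log|\cdot|$. Alternatively one can follow Wente's original calculation by expanding $a$ and $b$ in Fourier series on concentric circles in $\D$ and summing the resulting double series explicitly, where the dangerous terms cancel by orthogonality of the trigonometric modes.

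Once the $L^\infty$ estimate is secured, the energy bound is essentially free. Since $u \in W^{1,2}_0(\D)$, I would test the equation against $u$ itself and integrate by parts:
$$\int_\D |\nabla u|^2 \;=\; -\int_\D u\,\Delta u \;=\; -\int_\D u\,(\nabla a \cdot \nabla^\perp b) \;\le\; \|u\|_{L^\infty(\D)}\,\|\nabla a\|_{L^2(\D)}\,\|\nabla b\|_{L^2(\D)},$$
and substituting the $L^\infty$ bound yields $\|\nabla u\|_{L^2(\D)} \le C\,\|\nabla a\|_{L^2(\D)}\,\|\nabla b\|_{L^2(\D)}$. Finally, continuity of $u$ is handled by applying the $L^\infty$ bound to the differences $u_k - u_\ell$ associated with smooth approximations $(a_k,b_k)$ of $(a,b)$ in $W^{1,2}$: the sequence $(u_k)$ is then uniformly Cauchy, so that the limit $u$ is a uniform limit of continuous functions and therefore lies in $C^0(\D)$, as claimed.
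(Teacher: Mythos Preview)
The paper does not prove this theorem: it is merely stated in the appendix with references to Wente's original article and H\'elein's book, so there is no ``paper's own proof'' to compare against. Your sketch is correct and follows one of the standard routes. The rewriting $\nabla a\cdot\nabla^\perp b=\mathrm{div}(a\nabla^\perp b)$, the CLMS/$\mathcal{H}^1$--BMO argument (or alternatively Wente's Fourier expansion) for the $L^\infty$ bound, the energy identity $\int|\nabla u|^2=-\int u\,\Delta u$ bootstrapped from the $L^\infty$ estimate, and the density argument for continuity are all sound and together constitute a complete proof.
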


\begin{theo}[Wente's inequality $L^{2, \infty}$, theorem 3.4.5 of \cite{bibharmmaps}]
\label{wentel2infini}
Let $\Omega$ be a bounded domain of $\R^2$, with $C^2$ boundary. Suppose $a$ and $b$ such that $\nabla a \in L^{2, \infty} \left( \Omega \right)$ and $\nabla b \in L^2 \left( \Omega \right)$. Let $\varphi$ be the solution of 
$$\left\{ \begin{aligned} \Delta \varphi &= \nabla a. \nabla^\perp b \text{ in } \Omega \\
\varphi &= 0 \text{ on } \partial \Omega.\end{aligned} \right.$$
Then $\varphi \in W^{1,2} \left( \Omega \right)$, and there exists $C(\Omega) >0$ such that 
$$ \left\| \nabla \varphi \right\|_{L^2 \left( \Omega \right) } \le C( \Omega ) \| \nabla a \|_{L^{2, \infty} \left( \Omega \right)} \| \nabla b \|_{L^2 \left( \Omega \right) }.$$
\end{theo}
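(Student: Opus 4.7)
The plan is to prove the estimate by duality, exploiting the Jacobian (compensation-compactness) structure of the right-hand side via the Hardy space $\mathcal{H}^1$ and its dual $BMO$; this extra structure is precisely what lets one trade the weaker $L^{2,\infty}$ control on $\nabla a$ for an $L^2$ bound on $\nabla \varphi$.

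First, I would test the equation against an arbitrary $\psi \in W^{1,2}_0(\Omega)$. Using $\nabla a \cdot \nabla^\perp b = \mathrm{div}(a \nabla^\perp b)$ (since $\mathrm{div}(\nabla^\perp b) = 0$) and integrating by parts with $\psi|_{\partial\Omega} = 0$, one gets
\[ \int_\Omega \nabla \varphi \cdot \nabla \psi \;=\; \int_\Omega a \, (\nabla^\perp b \cdot \nabla \psi). \]
The integrand $\nabla^\perp b \cdot \nabla \psi = \partial_x(b \partial_y \psi) - \partial_y(b \partial_x \psi)$ is a null Lagrangian, so its integral over $\Omega$ vanishes (again because $\psi|_{\partial\Omega}=0$); this allows me to replace $a$ by $a - \bar a$, where $\bar a$ is the average of $a$ on $\Omega$.

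The heart of the argument is then to bound $\int_\Omega (a - \bar a)(\nabla^\perp b \cdot \nabla \psi)$ by combining three classical ingredients: (i) the Coifman-Lions-Meyer-Semmes theorem, ensuring $\nabla^\perp b \cdot \nabla \psi \in \mathcal{H}^1$ with norm at most $C\|\nabla b\|_{L^2}\|\nabla \psi\|_{L^2}$; (ii) the Fefferman-Stein $\mathcal{H}^1$-$BMO$ duality; (iii) the two-dimensional Sobolev-Lorentz embedding $W^{1,(2,\infty)}(\Omega) \hookrightarrow BMO(\Omega)$, which yields $\|a - \bar a\|_{BMO} \le C(\Omega)\|\nabla a\|_{L^{2,\infty}}$. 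Chaining these three estimates produces $\bigl|\int_\Omega \nabla \varphi \cdot \nabla \psi\bigr| \le C(\Omega)\|\nabla a\|_{L^{2,\infty}}\|\nabla b\|_{L^2}\|\nabla \psi\|_{L^2}$, and choosing $\psi = \varphi$ divides out one factor of $\|\nabla \varphi\|_{L^2}$ to give the claim.

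The main obstacle is the adaptation of the $\mathcal{H}^1$-$BMO$ machinery, typically stated on $\mathbb{R}^n$, to the bounded domain $\Omega$ with its $C^2$ boundary: one must localize the CLMS estimate via a controlled extension of $b$ and $\psi$, and handle boundary effects in the Poincar\'e-Sobolev bound from $\nabla a \in L^{2,\infty}$ to $BMO$. Both are standard but require care to ensure that the resulting constant depends only on $\Omega$; in dimension two they ultimately reduce to the fact that the Riesz potential of order one sends $L^{2,\infty}$ continuously into $BMO$.
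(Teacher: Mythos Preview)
The paper does not actually prove this statement: it is quoted verbatim as theorem 3.4.5 of H\'elein's book \cite{bibharmmaps} and used as a black box in the appendix subsection on Wente's lemmas. There is therefore nothing in the paper to compare your argument against.

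That said, your sketch is correct and is essentially the standard route to this inequality (and close in spirit to H\'elein's own treatment). The three ingredients you identify --- the CLMS $\mathcal{H}^1$ estimate for the Jacobian $\nabla^\perp b\cdot\nabla\psi$, Fefferman--Stein duality, and the two-dimensional embedding $W^{1,(2,\infty)}\hookrightarrow BMO$ --- combine exactly as you describe, and the duality argument with $\psi=\varphi$ is the natural way to close. Your caveat about passing from $\mathbb{R}^2$ to a bounded $C^2$ domain is the only genuinely delicate point; it is handled by extending $b$ and $\psi$ by zero (or by a bounded extension operator) and noting that the Jacobian structure is preserved, while the $BMO$ bound on $a-\bar a$ is a local Poincar\'e-type statement that holds on $\Omega$ with a constant depending on its geometry. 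None of this is a gap, just bookkeeping.
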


\begin{theo}[Wente's inequality $L^{2, 1}$, theorem 3.4.1 of \cite{bibharmmaps}]
\label{wentel21}
Let $\Omega$ be a bounded domain of $\R^2$, with $C^2$ boundary. Suppose $a$ and $b$ such that $ a \in W^{1,2} \left( \Omega \right)$ and  $b \in W^{1,2} \left( \Omega \right)$. Let $\varphi$ be the solution of 
$$\left\{ \begin{aligned} \Delta \varphi &= \nabla a. \nabla^\perp b \text{ in } \Omega \\
\varphi &= 0 \text{ on } \partial \Omega.\end{aligned} \right.$$
Then $\varphi \in W^{1,\left(2,1\right)} \left( \Omega \right)$, and there exists $C(\Omega) >0$ such that 
$$ \left\| \nabla \varphi \right\|_{L^{2,1} \left( \Omega \right) } \le C( \Omega ) \| \nabla a \|_{L^{2} \left( \Omega \right)} \| \nabla b \|_{L^2 \left( \Omega \right) }.$$
\end{theo}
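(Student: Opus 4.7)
The plan is to combine the Coifman--Lions--Meyer--Semmes Hardy space estimate for Jacobians with the Sobolev--Lorentz improvement of the Riesz potential. To set things up I would first reduce to the global setting on $\R^2$: since $\partial \Omega \in C^2$, extend $a$ and $b$ via a bounded extension operator to $\tilde a, \tilde b \in W^{1,2}(\R^2)$ satisfying $\|\nabla \tilde a\|_{L^2(\R^2)} \le C(\Omega) \|\nabla a\|_{L^2(\Omega)}$ and likewise for $\tilde b$.

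The first key step is to invoke the CLMS theorem: the Jacobian $\tilde f := \nabla \tilde a \cdot \nabla^\perp \tilde b$ lies in the real Hardy space $\mathcal{H}^1(\R^2)$, with the nontrivial estimate
$$\|\tilde f\|_{\mathcal{H}^1(\R^2)} \le C \|\nabla \tilde a\|_{L^2(\R^2)} \|\nabla \tilde b\|_{L^2(\R^2)}.$$
This is the crucial refinement of the trivial $\tilde f \in L^1$ bound coming from Cauchy--Schwarz, and is what ultimately produces the Lorentz gain. Next, let $\tilde \varphi := G * \tilde f$ solve $\Delta \tilde \varphi = \tilde f$ on $\R^2$, with $G(x) = \frac{1}{2\pi} \log|x|$. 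Differentiating once, $\nabla \tilde \varphi = \nabla G * \tilde f$; since $\nabla G$ is a homogeneous kernel of degree $-1$ and the Riesz transforms act boundedly on $\mathcal{H}^1$, this amounts (modulo bounded operators) to applying the Riesz potential $I_1$ to $\tilde f$. The sharp Hardy--Sobolev--Lorentz embedding $I_1 : \mathcal{H}^1(\R^2) \to L^{2,1}(\R^2)$ then yields
$$\|\nabla \tilde \varphi\|_{L^{2,1}(\R^2)} \le C \|\tilde f\|_{\mathcal{H}^1(\R^2)} \le C(\Omega) \|\nabla a\|_{L^2(\Omega)} \|\nabla b\|_{L^2(\Omega)}.$$

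Finally I would handle the boundary correction. The difference $h := \varphi - \tilde \varphi|_\Omega$ is harmonic in $\Omega$ with $h = -\tilde \varphi|_{\partial \Omega}$. Marcinkiewicz interpolation on the classical $L^p$ estimates for the Dirichlet problem in $\Omega$ (adapted to Lorentz targets) gives $\|\nabla h\|_{L^{2,1}(\Omega)} \le C(\Omega) \|\tilde \varphi\|_{W^{1,(2,1)}(\partial\Omega)}$, and the trace inequality bounds this by $\|\nabla \tilde \varphi\|_{L^{2,1}(\R^2)}$. Adding the two contributions delivers the claimed estimate on $\nabla \varphi$ in $L^{2,1}(\Omega)$.

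The main obstacle is not the extension or boundary correction, which are routine once the right functional framework is in place, but securing the two hard harmonic-analytic ingredients: the CLMS Hardy-space estimate for $\nabla a \cdot \nabla^\perp b$ (which requires the full strength of compensated compactness, typically proved via the grand maximal characterization of $\mathcal{H}^1$ together with a careful decomposition exploiting that $\nabla^\perp b$ is divergence-free) and the sharp Lorentz endpoint $I_1 : \mathcal{H}^1 \to L^{2,1}$ for the Riesz potential (provable by checking the action on atoms and invoking the atomic decomposition of $\mathcal{H}^1$). Both lie outside the immediate PDE toolbox of the paper and are exactly the reason why the statement is invoked here as a black box.
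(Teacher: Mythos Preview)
The paper does not prove this statement; it is quoted as theorem 3.4.1 of H\'elein's monograph and used throughout as a black box. Your outline via the Coifman--Lions--Meyer--Semmes estimate $\nabla a\cdot\nabla^\perp b\in\mathcal{H}^1(\R^2)$ followed by the sharp Lorentz endpoint $I_1:\mathcal{H}^1(\R^2)\to L^{2,1}(\R^2)$ is precisely the strategy H\'elein employs, so your proposal matches the cited proof rather than offering an alternative.

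The one step that could use tightening is the boundary correction. Writing $h=\varphi-\tilde\varphi|_\Omega$ and invoking a trace space $W^{1,(2,1)}(\partial\Omega)$ is workable but heavier than necessary. A cleaner route is to observe that the map sending $u\in W^{1,p}(\Omega)$ to the harmonic extension of its trace is bounded $W^{1,p}(\Omega)\to W^{1,p}(\Omega)$ for every $1<p<\infty$ on a $C^2$ domain (trace followed by Poisson extension), and then interpolate directly in the second index to land in $W^{1,(2,1)}$. Applied to $u=-\tilde\varphi|_\Omega$, this gives $\|\nabla h\|_{L^{2,1}(\Omega)}\le C(\Omega)\|\nabla\tilde\varphi\|_{L^{2,1}(\Omega)}$ without ever naming a fractional boundary space.
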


\begin{remark}
One must notice that the constant $C(\Omega)$ in theorems \ref{wentel2infini} and \ref{wentel21} depends on the shape of $\Omega$, but not its size due to the fact that $L^{2,\infty}$ and $L^{2,1}$ are scale-invariant, but not conformal invariant.  The same constant $C$ then works for all disks $\D_r$. Since $L^2$ is  a  conformal invariant the constant in theorem \ref{wenteclassique} does not depend on $\Omega$. We refer the reader to \cite{MR1239099} for more details.
\end{remark}

\subsection{Hodge decomposition }
In this subsection we briefly recall results on the Hodge decomposition and recast them in our framework.

\begin{theo}[$L^p$ decomposition, theorem 10.5.1 in \cite{geometricfunctiontheory}]
\label{hodgedecompositionforme}
Let $\Omega$ be a smoothly bounded domain in $\R^n$ and  $1<p<\infty$. Then for any $l$-differential form $\omega \in L^p$ there exists  a $l-1$ differential form $\alpha$, a $l+1$-differential form $\beta$ and a $l$-differential form $h$ such that: 
$$ \omega = d \alpha  + d^* \beta +h$$ 
with $dh = d^* h = 0$ and 
$$ \left\| \alpha \right\|_{W^{1,p} \left( \Omega \right) } +  \left\| \beta \right\|_{W^{1,p} \left( \Omega \right) } \le C_p\left( \Omega \right) \left\| \omega \right\|_{L^{p} \left( \Omega \right)}.$$
\end{theo}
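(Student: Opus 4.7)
The plan is to reduce the decomposition to solving an elliptic boundary value problem for the Hodge Laplacian $\Delta_H = dd^* + d^* d$ acting on $l$-forms, then to invoke Calder\'on-Zygmund estimates to promote the $L^2$ theory to $L^p$. In Euclidean coordinates the Hodge Laplacian is elliptic (it acts as the scalar Laplacian on each coefficient up to zero-order terms on a curved domain), and the natural boundary conditions to impose are the \emph{relative} boundary conditions, namely that the tangential part of $u$ and the tangential part of $du$ vanish on $\partial \Omega$; these form an elliptic boundary value problem in the Agmon-Douglis-Nirenberg sense, whose kernel is precisely the finite-dimensional space $\mathcal{H}^l(\Omega)$ of harmonic fields (i.e.\ $l$-forms $h$ satisfying $dh = d^* h = 0$ together with the prescribed boundary behaviour).

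First I would set $h := P_{\mathcal{H}^l} \omega$, the $L^2$ projection of $\omega$ onto $\mathcal{H}^l(\Omega)$. Since $\mathcal{H}^l(\Omega)$ is finite dimensional and consists of smooth forms, this projection is bounded on every $L^p$ with $p \in (1, \infty)$, and $\|h\|_{L^p} \le C \|\omega\|_{L^p}$. Next, using the standard $L^2$ Hodge-Morrey theory, solve
$$\Delta_H u = \omega - h \quad \text{in } \Omega,$$
subject to the relative boundary conditions, producing a unique $u \perp \mathcal{H}^l(\Omega)$ in $L^2$. Once this $u$ is obtained, the Calder\'on-Zygmund type estimates for elliptic boundary value problems upgrade it to $W^{2,p}(\Omega)$ with
$$\|u\|_{W^{2,p}(\Omega)} \le C_p(\Omega) \|\omega - h\|_{L^p(\Omega)} \le C_p(\Omega) \|\omega\|_{L^p(\Omega)}.$$

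Setting $\alpha := d^* u$ (an $(l{-}1)$-form) and $\beta := du$ (an $(l{+}1)$-form), the Hodge identity gives
$$\omega = h + \Delta_H u = h + d(d^* u) + d^*(du) = h + d\alpha + d^* \beta,$$
with $dh = d^* h = 0$ by construction, and the chain of estimates
$$\|\alpha\|_{W^{1,p}} + \|\beta\|_{W^{1,p}} \le \|d^* u\|_{W^{1,p}} + \|du\|_{W^{1,p}} \le C \|u\|_{W^{2,p}} \le C_p(\Omega) \|\omega\|_{L^p}$$
delivers the advertised bound.

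The main obstacle is establishing the $L^p$ regularity for the Hodge Laplacian with the chosen boundary conditions. In $L^2$ this follows from Gaffney's inequality and the spectral theorem, but to handle a general $p \in (1, \infty)$ one must verify the Lopatinsky-Shapiro complementing condition, so that the full Agmon-Douglis-Nirenberg machinery applies; alternatively, one can argue by reducing locally to the half-space, freezing coefficients, and handling the constant-coefficient model problem by explicit singular integrals. Once this analytic input is granted, the rest is a purely algebraic manipulation of $\Delta_H = dd^* + d^* d$.
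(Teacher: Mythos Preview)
The paper does not prove this statement at all: it is quoted verbatim as theorem 10.5.1 from \cite{geometricfunctiontheory} and used as a black box, with the author explicitly noting that the original reference ``is in fact more accurate and goes into much more details about the boundary conditions.'' So there is no proof in the paper to compare your proposal against.

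That said, your sketch is the standard route to the $L^p$ Hodge--Morrey--Friedrichs decomposition and is essentially correct in outline: project onto the finite-dimensional space of harmonic fields, solve $\Delta_H u = \omega - h$ with appropriate (relative or absolute) boundary conditions, and set $\alpha = d^* u$, $\beta = du$. The only point worth flagging is that you should be careful about which boundary conditions you impose and on which pieces; the statement as quoted in the paper is deliberately vague on this, and the full result in \cite{geometricfunctiontheory} specifies tangential/normal boundary behaviour for $\alpha$ and $\beta$ that makes the decomposition unique. Your remark that the Lopatinsky--Shapiro condition (or a direct half-space reduction) is the analytic crux for $p \neq 2$ is accurate, and indeed this is where most of the work in the cited reference lies.
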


 Theorem 10.5.1 in \cite{geometricfunctiontheory} is in fact more accurate and goes into much more details about the boundary conditions. However quoting it in a comprehensive manner would require to introduce new notations. We thus restrict ourselves to this partial result, which will satisfy our current needs.
Taking $X  = \begin{pmatrix} X_1 \\ X_2 \end{pmatrix} \in L^p \left( \D_r, \R \times \R \right)$, and $ \omega = X_1 dx + X_2dy$, one can apply theorem \ref{hodgedecompositionforme} and find a function $\alpha$, a volume form $\beta$ and a harmonic $1$-form $h$ on $\D_r$ such that: 
$$\omega  = d \alpha  + d^* \beta +h,$$ 
$$ \left\| \alpha \right\|_{W^{1,p} \left(\D_r \right) } +  \left\| \beta \right\|_{W^{1,p} \left( \D_r \right) } \le C_p\left( \D_r \right) \left\| \omega \right\|_{L^{p} \left( \D_r \right)} \le C_p\left( r \right) \left\| X \right\|_{L^{p} \left( \D_r \right)} .$$
Since $\mathrm{div}(X ) = d^* \omega = \Delta \alpha$ we deduce 

\begin{cor}
\label{hodgedecompositionmoinsbon}
Let $r>0$ and $1<p<\infty$. For any $X \in L^p \left( \D_r, \R \times \R \right)$ there exists $\alpha \in W^{1,p} \left( \D_r \right)$ such that
$$\mathrm{div}(X ) = \Delta \alpha$$
and 
$$ \left\| \alpha \right\|_{W^{1,p} \left(\D_r \right) }  \le C_p\left( r \right) \left\| X \right\|_{L^{p} \left( \D_r \right)}.$$
\end{cor}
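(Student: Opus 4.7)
The plan is to directly reduce the statement to Theorem \ref{hodgedecompositionforme} via the natural identification of a vector field on a planar domain with a $1$-form, then to read off $\mathrm{div}(X) = \Delta \alpha$ from the identity $d^*d = -\Delta$ acting on functions in flat $\R^2$ (up to a sign convention absorbed into $\alpha$).

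Concretely, given $X = (X_1, X_2) \in L^p(\D_r, \R^2)$, I would first set $\omega := X_1\, dx + X_2\, dy$, which is an $L^p$ $1$-form on $\D_r$ with $\|\omega\|_{L^p(\D_r)}$ equivalent to $\|X\|_{L^p(\D_r)}$. Applying Theorem \ref{hodgedecompositionforme} with $\ell = 1$ yields a function $\alpha$, a $2$-form $\beta$ and a harmonic $1$-form $h$ on $\D_r$ with
$$\omega = d\alpha + d^*\beta + h, \qquad \|\alpha\|_{W^{1,p}(\D_r)} + \|\beta\|_{W^{1,p}(\D_r)} \le C_p(r)\|X\|_{L^p(\D_r)}.$$

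Next, I would apply $d^*$ to both sides. Since $(d^*)^2 = 0$ and $d^*h = 0$ (as $h$ is harmonic, in particular coclosed), this collapses to $d^*\omega = d^*d\alpha$. A direct computation with Hodge stars in $\R^2$ identifies $d^*\omega$ with $-\mathrm{div}(X)$ and $d^*d\alpha$ with $-\Delta \alpha$, so that $\mathrm{div}(X) = \Delta \alpha$ as distributions. The $W^{1,p}$ estimate on $\alpha$ is then exactly the one inherited from the Hodge decomposition.

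There is no real obstacle: the content is entirely contained in Theorem \ref{hodgedecompositionforme}, and the only thing to be careful about is the sign convention in the identification $d^* = -*d*$ and the identity $d^*d = -\Delta$ on $0$-forms, which can be absorbed by replacing $\alpha$ with $-\alpha$ if necessary. The only mild subtlety worth flagging is that we are not imposing any boundary condition on $\alpha$ (corollary \ref{hodgedecompositionmoinsbon} does not require one), so the boundary data that Theorem \ref{hodgedecompositionforme} produces for $\alpha$, $\beta$, $h$ can be discarded; only the interior identity and the norm bound are used.
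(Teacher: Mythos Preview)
Your proposal is correct and follows essentially the same route as the paper: identify $X$ with the $1$-form $\omega = X_1\,dx + X_2\,dy$, apply Theorem \ref{hodgedecompositionforme}, and then observe $d^*\omega = \Delta\alpha$ (the paper states this last identity in one line, while you unpack it by applying $d^*$ to the decomposition and tracking signs). No differences worth noting.
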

Using Marcinkiewitz interpolation theorem (see for example theorem 3.3.3 of  \cite{bibharmmaps}) enables us to write 
\begin{cor}
\label{hodgedecompositionmoinsbon21}
Let $r>0$, for any $X \in L^{2,1} \left( \D_r, \R^2\right)$ there exists $\alpha \in W^{1,(2,1)} \left( \D_r \right)$ such that
$$\Delta \alpha=\mathrm{div} (X ) $$
and 
$$ \left\| \alpha \right\|_{W^{1,(2,1)} \left(\D_r \right) }  \le C\left( r \right) \left\| X \right\|_{L^{2,1} \left( \D_r \right)}.$$
\end{cor}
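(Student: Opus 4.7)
The plan is essentially to apply Marcinkiewicz's interpolation theorem to the bounded linear operator furnished by Corollary \ref{hodgedecompositionmoinsbon}. The Hodge decomposition in Theorem \ref{hodgedecompositionforme} (once one fixes a normalization, for instance by requiring $\alpha$ to have zero mean on $\D_r$, which also makes the map linear) defines an assignment $T : X \mapsto \alpha$, and by Corollary \ref{hodgedecompositionmoinsbon} this is a bounded linear operator from $L^p(\D_r, \R^2)$ into $W^{1,p}(\D_r)$ for every $1 < p < \infty$, with $\Delta \alpha = \mathrm{div}(X)$.

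First I would fix two exponents $p_0, p_1$ with $1 < p_0 < 2 < p_1 < \infty$ and record, from Corollary \ref{hodgedecompositionmoinsbon}, the continuity estimates
$$
\| \alpha \|_{L^{p_i}(\D_r)} + \| \partial_x \alpha \|_{L^{p_i}(\D_r)} + \| \partial_y \alpha \|_{L^{p_i}(\D_r)} \le C_{p_i}(r)\, \|X\|_{L^{p_i}(\D_r)}
$$
for $i=0,1$. Thus each of the three linear operators $T$, $\partial_x T$, $\partial_y T$ is of strong (in particular weak) type $(p_0, p_0)$ and $(p_1, p_1)$ from $L^{p_i}(\D_r, \R^2)$ into $L^{p_i}(\D_r)$.

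Next I would apply Marcinkiewicz's interpolation theorem in its Lorentz-space formulation (Theorem 3.3.3 of \cite{bibharmmaps}) to each of these three operators. Since $\tfrac{1}{2}$ lies strictly between $\tfrac{1}{p_0}$ and $\tfrac{1}{p_1}$, the space $L^{2,1}(\D_r)$ is the appropriate real interpolation space between $L^{p_0}$ and $L^{p_1}$, and the three operators extend boundedly from $L^{2,1}(\D_r, \R^2)$ into $L^{2,1}(\D_r)$. Summing the three resulting estimates yields
$$
\|\alpha\|_{W^{1,(2,1)}(\D_r)} \le C(r)\, \|X\|_{L^{2,1}(\D_r)},
$$
as desired.

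I do not anticipate any genuine obstacle here; the argument is a routine interpolation upgrade of the $L^p$ statement established just above. The only care required is to fix a specific normalization so that $T$ is genuinely a linear operator (as opposed to an assignment defined up to harmonic/constant additions), which is needed for the Marcinkiewicz machinery to apply.
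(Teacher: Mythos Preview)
Your proposal is correct and follows exactly the route the paper takes: the paper simply invokes Marcinkiewicz interpolation (theorem 3.3.3 of \cite{bibharmmaps}) on the $L^p$ Hodge-decomposition operator of Corollary \ref{hodgedecompositionmoinsbon}, and you have spelled out the details. Your remark about fixing a normalization so that $X\mapsto\alpha$ is genuinely linear is a sensible precaution that the paper leaves implicit.
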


\bibliographystyle{plain}
\bibliography{bibliography}

\end{document}